\newcommand{\Sym}{{\mathrm {Sym}}}
\newcommand{\C}{{\mathbb {C}}}
\newcommand{\R}{{\mathbb {R}}}
\newcommand{\N}{{\mathbb {N}}}
\newcommand{\Z}{{\mathbb {Z}}}
\newcommand{\HH}{{\mathbb {H}}}
\newcommand{\sSU}{{\mathsf {SU}}}
\newcommand{\sSL}{{\mathsf {SL}}}
\newcommand{\sSO}{{\mathsf {SO}}}
\newcommand{\sO}{{\mathsf {O}}}
\newcommand{\sSp}{{\mathsf {Sp}}}
\newcommand{\sSpin}{{\mathsf {Spin}}}
\newcommand{\sGL}{{\mathsf {GL}}}
\newcommand{\sG}{{\mathsf G}}
\newcommand{\sH}{{\mathsf H}}
\newcommand{\sS}{{\mathsf S}}
\newcommand{\sP}{{\mathsf P}}
\newcommand{\sL}{{\mathsf L}}
\newcommand{\sU}{{\mathsf U}}
\newcommand{\Teich}{{\mathsf{Teich}}}
\newcommand{\Fuch}{{\mathsf{Fuch}}}
\newcommand{\Isom}{{\mathsf{Isom}}}
\newcommand{\Bihol}{{\mathsf{Bihol}}}
\newcommand{\QFuch}{{\mathsf{QFuch}}}
\newcommand{\Hom}{{\mathrm{Hom}}}
\newcommand{\Pic}{{\mathsf{Pic}}}
\newcommand{\End}{{\mathrm{End}}}
\newcommand{\Hit}{{\mathsf{Hit}}}
\newcommand{\Id}{{\mathrm{Id}}}
\newcommand{\Ad}{{\operatorname{Ad}}}
\newcommand{\ad}{{\operatorname{ad}}}
\newcommand{\tr}{{\operatorname{tr}}}
\newcommand{\Xx}{{\mathcal{X}}}
\newcommand{\Bb}{{\mathcal{B}}}
\newcommand{\Oo}{{\mathcal{O}}}
\newcommand{\Kk}{{\mathcal{K}}}
\newcommand{\Hh}{{\mathcal{H}}}
\newcommand{\Ee}{{\mathcal{E}}}
\newcommand{\Gg}{{\mathcal{G}}}
\newcommand{\p}{{\partial}}
\newcommand{\Mm}{{\mathcal{M}}}
\newcommand{\Pp}{{\mathcal{P}}}
\newcommand{\fg}{{\mathfrak{g}}}
\newcommand{\fm}{{\mathfrak{m}}}
\newcommand{\fh}{{\mathfrak{h}}}
\newcommand{\fsl}{{\mathfrak{sl}}}
\newcommand{\fso}{{\mathfrak{so}}}
\newcommand{\fs}{{\mathfrak{s}}}
\newcommand{\rk}{\operatorname{rk}}
\newcommand{\mtrx}[1]{\left (\begin{matrix}#1\end{matrix}\right)}
\newcommand{\smtrx}[1]{\left (\begin{smallmatrix}#1\end{smallmatrix}\right)}
\numberwithin{equation}{section}
\newtheorem{Theorem}{Theorem}[section]
\newtheorem{Corollary}[Theorem]{Corollary}
\newtheorem{Question}[Theorem]{Question}
\newtheorem{Proposition}[Theorem]{Proposition}
{ \theoremstyle{definition}
\newtheorem{Definition}[Theorem]{Definition}
\newtheorem{Example}[Theorem]{Example}
\newtheorem{Remark}[Theorem]{Remark} }
\begin{document}

\allowdisplaybreaks

\newcommand{\arXivNumber}{1809.06786}

\renewcommand{\thefootnote}{}

\renewcommand{\PaperNumber}{010}

\FirstPageHeading

\ShortArticleName{Studying Deformations of Fuchsian Representations with Higgs Bundles}

\ArticleName{Studying Deformations of Fuchsian Representations\\ with Higgs Bundles\footnote{This paper is a~contribution to the Special Issue on Geometry and Physics of Hitchin Systems. The full collection is available at \href{https://www.emis.de/journals/SIGMA/hitchin-systems.html}{https://www.emis.de/journals/SIGMA/hitchin-systems.html}}}

\Author{Brian COLLIER}

\AuthorNameForHeading{B.~Collier}

\Address{Department of Mathematics, University of Maryland, College Park, MD 20742, USA}
\Email{\href{mailto:briancollier01@gmail.com}{briancollier01@gmail.com}}
\URLaddress{\url{https://www.math.umd.edu/~bcollie2/}}

\ArticleDates{Received October 16, 2018, in final form February 02, 2019; Published online February 12, 2019}

\Abstract{This is a survey article whose main goal is to explain how many components of the character variety of a closed surface are either deformation spaces of representations into the maximal compact subgroup or deformation spaces of certain Fuchsian representations. This latter family is of particular interest and is related to the field of higher Teichm\"uller theory. Our main tool is the theory of Higgs bundles. We try to develop the general theory of Higgs bundles for real groups and indicate where subtleties arise. However, the main emphasis is placed on concrete examples which are our motivating objects. In particular, we do not prove any of the foundational theorems, rather we state them and show how they can be used to prove interesting statements about components of the character variety. We have also not spent any time developing the tools (harmonic maps) which define the bridge between Higgs bundles and the character variety. For this side of the story we refer the reader to the survey article of Q.~Li~[arXiv:1809.05747].}

\Keywords{Higgs bundles; character varieties; higher Teichm\"uller theory}

\Classification{14D20; 14F45; 14H60}

\renewcommand{\thefootnote}{\arabic{footnote}}
\setcounter{footnote}{0}

\section{An introduction to the character variety}
Let $S$ be a closed oriented surface of genus $g\geq 2$. Denote the fundamental group of $S$ by $\Gamma$, and recall that $\Gamma$ has the standard presentation
\begin{gather*} \Gamma=\left\langle a_1,\dots,a_g,b_1,\dots,b_g\,|\,\prod\limits_{j=1}^g[a_j,b_j]=1\right\rangle.\end{gather*}
Fix also a real reductive Lie group $\sG$. For example $\sG$ could be one of the following groups
\begin{gather*} \xymatrix@=1em{\sGL(n,\R),&\sGL(n,\C),&\sSL(n,\C),&\sSp(2n,\R),&\sP\sSL(n,\R)=\sSL(n,\R)/\pm\Id,\\\sU(n),&\sSO(n),&\sSU(p,q),&\sSO(p,q),&\sS(\sO(p)\times \sO(q)),}\end{gather*}
but $\sG$ \emph{cannot} be a group like
\begin{gather*} \sP=\big\{\smtrx{a&b\\0&c}\in\sGL(2,\C)\big\}.\end{gather*}
One main property of a reductive Lie group $\sG$ is that, up to conjugation, there is a unique maximal compact subgroup $\sH< \sG$. We will heavily use this property. In fact there is a~homotopy equivalence
\begin{gather*} \sH\simeq\sG.\end{gather*}

A group homomorphism $\rho\colon \Gamma\to\sG$ will be referred to as a \emph{representation}. Consider the set $\Hom(\Gamma,\sG)$ of all such representations. This set has a topological structure induced by the inclusion
\begin{gather*} \xymatrix@R=0em{\Hom(\Gamma,\sG)\ar[r]&\sG^{2g},\\\rho\ar@{|->}[r]&(\rho(a_1),\dots,\rho(b_g)).}\end{gather*}
Since a representation $\rho$ must satisfy $\prod\limits_{j=1}^g[\rho(a_j),\rho(b_j)]=\Id$, the image consists of $2g$-tuples of elements of $\sG$ which satisfy this relation. For semisimple groups, the dimension of $\Hom(\Gamma,\sG)$ was computed in \cite{SymplecticNatureofFund} to be
\begin{gather*}\dim(\Hom(\Gamma,\sG))=2g\cdot\dim\sG-\dim\sG.\end{gather*}

The group $\sG$ acts on $\Hom(\Gamma,\sG)$ by conjugation: for $\gamma\in\Gamma$ and $g\in\sG$,
\begin{gather*} (\rho\cdot g)(\gamma)=g^{-1}\rho(\gamma)g.\end{gather*}
The quotient space $\Hom(\Gamma,\sG)/\sG$ consists of conjugacy classes of representations. Unless $\sG$ is compact, $\Hom(\Gamma,\sG)/\sG$ is not Hausdorff.

\begin{Example} Let $\rho_1(\gamma)=\Id$ for all $\gamma\in \Gamma$ and define $\rho_2$ by $\rho_2(a_j)=\Id$ for all $j$ and
\begin{gather*}\rho_2(b_j)=\begin{cases}
 \Id,&j\neq g,\\
 \smtrx{1&1\\0&1},&j=g.
 \end{cases}\end{gather*}
If $g_t=\smtrx{t&0\\0&t^{-1}}$, then $g_t^{-1}\rho_2(a_j)g_t=\Id$ for all $j$ and
\begin{gather*} g_t^{-1}\rho_2(b_j)g_t=\begin{cases}
 \Id,&j\neq g,\\
 \smtrx{1&t^{-2}\\0&1},&j=g.
 \end{cases}\end{gather*}
 The $\sG$-orbits through $\rho_1$ and $\rho_2$ are disjoint in $\Hom(\Gamma,\sG)$, but
\begin{gather*} \rho_1\in\overline{\rho_2\cdot G}.\end{gather*}
 Thus, $[\rho_1]$ and $[\rho_2]$ are not separable in $\Hom(\Gamma,\sG)/\sG$.
\end{Example}

To get a Hausdorff quotient we restrict to a subset of $\Hom(\Gamma,\sG)$. A representation $\rho\colon \Gamma\to \sG$ is called \emph{reductive} if the composition with the adjoint representation
\begin{gather*} \xymatrix{\Gamma\ar[r]^\rho&\sG\ar[r]^{\Ad}&\sGL(\fg)}\end{gather*}
decomposes as a direct sum of irreducible representations. This is equivalent to the $\sG$-orbit through $\rho$ being closed in $\Hom(\Gamma,\sG)$. When $\sG$ is a complex algebraic group, this is also equivalent to the Zariski closure of $\rho(\Gamma)$ being a reductive subgroup.

Denote the set of reductive representations by $\Hom^+(\Gamma,\sG)$. The \emph{$\sG$-character variety of $\Gamma$} is defined to be the Hausdorff space
\begin{gather*}\Xx(\Gamma,\sG)=\Hom^+(\Gamma,\sG)/\sG.\end{gather*}
For semisimple groups, the dimension of $\Xx(\Gamma,\sG)$ is $(2g-2)\dim(\sG)$ \cite{SymplecticNatureofFund}.
\begin{Remark}When $\sG$ is complex algebraic, one gets the same spaces as the GIT-quotient.
\end{Remark}

\subsection{Fuchsian representations} Important examples of points in a character variety come from Fuchsian representations and are related to hyperbolic metrics on $S$. These examples will play a fundamental role throughout the article.

Let $X$ be a Riemann surface structure on $S$. By the uniformization theorem,
\begin{gather*}X=\HH^2/\rho_X(\Gamma),\end{gather*}
where $\HH^2$ is the upper half-plane and $\rho_X\colon \Gamma\hookrightarrow\Bihol(\HH^2)\cong\sP\sSL(2,\R)$. By classical results of Riemann, there are $3g-3$ complex parameters for the deformations of the Riemann surface~$X$. Such deformations give rise to a $(6g-6)$-real dimensional subset of $\Xx(\Gamma,\sP\sSL(2,\R))$. Moreover, such deformations define an open neighborhood of $\rho_X\in\Xx(\Gamma,\sP\sSL(2,\R))$ since $6g-6$ is the dimension of $\Xx(\Gamma,\sP\sSL(2,\R))$. This leads us to define the set of \emph{Fuchsian representations}:
\begin{gather*}\Fuch(\Gamma)=\{\rho\colon \Gamma\to\sP\sSL(2,\R)\,|\,\text{discrete and faithful}\}/\sP\sSL(2,\R).\end{gather*}

Since being discrete and faithful is a closed condition in $\Xx(\Gamma,\sG)$, $\Fuch(\Gamma)$ is a closed subset. By the above argument, $\Fuch(\Gamma)$ is also open. Thus, $\Fuch(\Gamma)\subset\Xx(\Gamma,\sP\sSL(2,\R))$ is an open and closed subset which is identified with the Teichm\"uller space of the oriented surface $S$ and the surface $\bar S$ with the opposite orientation
\begin{gather*}\Fuch(\Gamma)\cong\Teich(S)\sqcup\Teich\big(\bar S\big).\end{gather*}
The Teichm\"uller space of an oriented surface $S$ is defined to be the space of isotopy classes of (marked) Riemann surface structures on $S$. This space is famously an open cell of real dimension $6g-6$.

The group $\sP\sSL(2,\R)$ is also the orientation preserving isometry group of the hyperbolic plane. Thus, $\Fuch(\Gamma)$ also parameterizes the set of isotopy classes of hyperbolic metrics on the surface~$S$ and~$\bar S$.

\begin{Remark}The two connected components of $\Fuch(\Gamma)\subset\Xx(\Gamma,\sP\sSL(2,\R))$ arise because elements of $\sP\sSL(2,\R)$ preserve the orientation of $\HH^2$. The group $\sP\sGL(2,\R)$ is the full isometry group of the hyperbolic plane. Thus, if we consider $\Fuch(\Gamma)$ as a subset of the $\sP\sGL(2,\R)$-character variety, then there is only one component. Similarly, if we consider $\Fuch(\Gamma)$ as a subset of $\sP\sSL(2,\C)$-character variety, then it is a closed connected subset which is no longer open.
\end{Remark}

\subsection{Connected components}
One fundamental problem is to determine how many connected components a character variety has. Surprisingly, this question has not been answered in full generality. There is a topological invariant which helps distinguish connected components.

Denote the set of isomorphism classes of topological principal $\sG$-bundles on $S$ by $\Bb_{\sG}(S)$, this is the set of homotopy classes of maps from $S$ to the classifying space of $\sG$. For connected groups we have $\Bb_\sG(S)\cong H^2(S,\pi_1\sG)$. Every representation $\rho\colon \Gamma\to\sG$ defines a principal $\sG$-bundle $E_\rho\to S$
\begin{gather*}E_\rho=\big(\tilde S\times\sG\big)/\Gamma,\end{gather*}
where $\Gamma$ acts on its universal cover $\tilde S$ by deck transformations and by multiplication by~$\rho(\Gamma)$ on~$\sG$. Thus, we have a map $\Hom(\Gamma,\sG)\to\Bb_{\sG}(S)$. Moreover, this map is continuous and descends to a map
\begin{gather}\label{eq top invariant map}
\tau\colon \ \xymatrix{\pi_0(\Xx(\Gamma,\sG))\ar[r]&\Bb_\sG(S).}
\end{gather}
If $\Xx^\omega(\Gamma,\sG)=\tau^{-1}(\omega)$, then $\Xx(\Gamma,\sG)$ decomposes as
\begin{gather*}\coprod\limits_{\omega\in\Bb_\sG(S)}\Xx^\omega(\Gamma,\sG).\end{gather*}

\begin{Question}\label{question comp}When is the map $\tau$ from \eqref{eq top invariant map} injective? In other words, when does the topological invariant distinguish the connected components of the character variety?
\end{Question}
\begin{Remark}Note that when $\tau$ is injective, the question of the connected component count is not very interesting. We will mainly be interested in when $\tau$ is \emph{not} injective and understanding the special features of these components.
\end{Remark}
Question \ref{question comp} has been answered for many groups, but is open in general. For compact groups the map~$\tau$ is injective, this was proven by Narasimhan--Seshadri \cite{NarasimhanSeshadri} for $\sG=\sU(n)$ and Ramanathan \cite{ramanathan_1975} in general.

\begin{Theorem}\label{thm tau injective compact}
 If $\sG$ is compact $($i.e., $\sG=\sH)$, then $\tau$ is injective. Furthermore, if $\sG$ is also semisimple, then $\tau$ is a bijection.
\end{Theorem}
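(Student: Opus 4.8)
The plan is to fix an auxiliary complex structure, transport the problem to a moduli space of holomorphic bundles, and there establish connectedness by a stratification argument. First I would choose a Riemann surface structure $X$ on $S$, which endows the complexification $\sG_\C=\sH_\C$ with a notion of holomorphic principal bundle. In the language of Higgs bundles, a representation into the compact form $\sG=\sH$ corresponds to a polystable Higgs bundle with vanishing Higgs field, so the Higgs field plays no role and the statement is classical: a representation $\rho\colon\Gamma\to\sG$ produces a flat, hence polystable, holomorphic $\sG_\C$-bundle on $X$, and conversely the theorems of Narasimhan--Seshadri \cite{NarasimhanSeshadri} (for $\sG=\sU(n)$) and Ramanathan \cite{ramanathan_1975} (in general) give a homeomorphism between $\Xx^\omega(\Gamma,\sG)$ and the moduli space $\Mm^\omega$ of polystable holomorphic $\sG_\C$-bundles whose underlying topological type is $\omega$ and whose ``degree'' vanishes. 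Under this identification $\tau$ becomes the assignment of topological type, so injectivity of $\tau$ is equivalent to the assertion that each fiber $\Mm^\omega$ is connected.

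\emph{Connectedness.} Here I would fix a smooth $\sG_\C$-bundle of type $\omega$ and consider the affine space $\mathcal{A}$ of all holomorphic structures ($\bar\partial$-operators) on it; this space is contractible, in particular connected. The Harder--Narasimhan/Shatz stratification decomposes $\mathcal{A}$ into the open dense semistable stratum together with unstable strata indexed by instability types, and $\Mm^\omega$ is the quotient of the semistable stratum by the complex gauge group. The key computation, due to Atiyah--Bott for vector bundles and to Ramanathan in general, is that every nonempty unstable stratum has positive complex codimension (equivalently, real codimension at least two). Removing such strata from a connected space leaves a connected space, and the continuous image under the gauge quotient is again connected; hence $\Mm^\omega$ is connected and $\tau$ is injective. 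This codimension estimate is the main obstacle: it requires careful bookkeeping of the Shatz strata and, for a general group, the parabolic reduction theory underlying Ramanathan's construction.

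\emph{Surjectivity in the semisimple case.} Finally, when $\sG$ is semisimple the group $\pi_1\sG$ is finite, so $\Bb_\sG(S)\cong H^2(S,\pi_1\sG)\cong\pi_1\sG$ consists entirely of torsion classes, and I would show that each is realized by a representation. Writing $\tilde\sG\to\sG$ for the universal cover, with central kernel $\pi_1\sG$, the class $\tau([\rho])$ is computed by lifting the generators and evaluating $\prod_{j=1}^g[\tilde\rho(a_j),\tilde\rho(b_j)]\in\pi_1\sG$, an expression independent of the chosen lifts since commutators are insensitive to the center. Because $\tilde\sG$ is a connected semisimple Lie group and $g\geq 2$, every central element can be written as such a product of commutators, so $\tau$ is surjective and therefore bijective. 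By contrast, for a group with a torus factor such as $\sG=\sU(1)$, flat bundles have vanishing degree and only the trivial class is attained; this is precisely why semisimplicity is needed for surjectivity but not for injectivity.
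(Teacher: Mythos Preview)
The paper does not give its own proof of this theorem: it is stated with attribution to Narasimhan--Seshadri \cite{NarasimhanSeshadri} and Ramanathan \cite{ramanathan_1975}, and the Higgs bundle machinery of Section~\ref{section some comp results} is deployed only for the \emph{complex} case (Theorem~\ref{thm tau injective complex}), whose Morse--Bott argument establishes $\pi_0(\Mm(\sG))=\pi_0(\Mm(\sH))$ and thereby reduces back to the compact statement taken as input. Your outline therefore does more than the paper attempts, and it is correct. The stratification argument---contractibility of the affine space of $\bar\partial$-operators, positive complex codimension of each Harder--Narasimhan unstable stratum, hence connectedness of the semistable locus and of its gauge quotient---is precisely the Atiyah--Bott route \cite{AtiyahBottYangMillsEq} to connectedness of the moduli space of a fixed topological type, and your identification of the codimension estimate as the crux is accurate. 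For surjectivity the commutator lifting is the right idea; the ingredient is Got\^o's theorem that every element of a compact connected semisimple Lie group is a single commutator, so in fact $g\geq 1$ already suffices. One small tightening: write ``compact connected semisimple'' rather than merely ``connected semisimple'' for $\tilde\sG$, since compactness is what makes the commutator claim immediate.
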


Since $\sH$ and $\sG$ are homotopic, we have $\Bb_\sG(S)=\Bb_\sH(S)$. Moreover, for each $\omega\in\Bb_\sH(S)$
\begin{gather*}\Xx^\omega(\Gamma,\sH)\subset\Xx^\omega(\Gamma,\sG).\end{gather*}
For complex groups, the map $\tau$ is also injective. This was proven for by J.~Li~\cite{JunLiConnectedness} for semisimple groups and Garc\'{\i}a-Prada and Oliveira \cite{Oliveira_GarciaPrada_2016} for reductive groups.
\begin{Theorem}\label{thm tau injective complex}
If $\sG$ is complex $($i.e., $\sG=\sH^\C)$, then $\tau$ is injective.
\end{Theorem}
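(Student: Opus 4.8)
The plan is to pass to the Higgs bundle side via the non-abelian Hodge correspondence and then count connected components by Morse theory. First I would use the homeomorphism $\Xx^\omega(\Gamma,\sG)\cong\Mm^\omega$ between each topological fiber and the moduli space $\Mm^\omega$ of polystable $\sG$-Higgs bundles of type $\omega$; since $\sG=\sH^\C$ is complex, such an object is a holomorphic principal $\sG$-bundle $P$ together with a field $\varphi\in H^0(X,\ad P\otimes K)$. As $\tau$ being injective is precisely the assertion that each $\Xx^\omega(\Gamma,\sG)$ is connected, it suffices to prove that $\Mm^\omega$ is connected for every $\omega\in\Bb_\sG(S)$.

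Second, I would introduce the Hitchin function
\begin{gather*} f\colon\ \Mm^\omega\to\R_{\geq0},\qquad f([P,\varphi])=\|\varphi\|_{L^2}^2,\end{gather*}
where the norm is computed using the harmonic (Hermitian--Einstein) metric. The key analytic input is that $f$ is proper and bounded below. Properness guarantees that $f$ attains its infimum on each connected component of $\Mm^\omega$, so every component contains a local minimum; hence the inclusion of the subspace $N^\omega\subset\Mm^\omega$ of local minima induces a surjection $\pi_0(N^\omega)\to\pi_0(\Mm^\omega)$. In particular, connectedness of $N^\omega$ forces connectedness of $\Mm^\omega$.

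Third, I would identify the local minima. The scaling action $t\cdot(P,\varphi)=(P,t\varphi)$ of $\C^*$ has $f$ as a moment map for its $S^1$-part, so the critical points of $f$ are the fixed points, which are complex variations of Hodge structure. The crucial claim --- and this is where I expect the real work to lie --- is that for complex $\sG$ the only local minima are the points with $\varphi=0$. One proves this by computing the eigenvalues of the induced $\C^*$-action (equivalently, the Hessian of $f$) on the deformation complex at a fixed point with $\varphi\neq0$ and exhibiting a negative weight, i.e., a direction along which $f$ decreases; the bookkeeping here uses the root-space decomposition of $\fg$ and is exactly the technical heart of the arguments of J.~Li and of Garc\'{\i}a-Prada--Oliveira. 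Granting this, $N^\omega$ is precisely the $\varphi=0$ locus, namely the moduli space of polystable $\sG$-bundles of topological type $\omega$.

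Finally, I would appeal to connectedness of the moduli space of polystable $\sG$-bundles of fixed topological type. Via Narasimhan--Seshadri/Ramanathan this moduli space is identified with $\Xx^\omega(\Gamma,\sH)$, which is connected by Theorem~\ref{thm tau injective compact}. Hence $N^\omega$ is connected, so $\Mm^\omega$ is connected, and therefore $\tau$ is injective. The delicate step throughout is the minima computation in the third paragraph: properness of $f$ and the non-abelian Hodge correspondence are powerful black boxes, but showing that no nonzero Hodge bundle is a local minimum is what genuinely requires the complex structure of $\sG$ and distinguishes this case from the real groups treated later.
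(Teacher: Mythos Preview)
Your proposal is correct and follows essentially the same route as the paper: pass to Higgs bundles, use properness of the Hitchin function $f$ to reduce to classifying local minima, show that for complex $\sG$ every local minimum has $\varphi=0$, and then invoke the compact case. The paper's execution of the key third step is slightly cleaner than the negative-weight search you sketch: it uses the criterion (Proposition~\ref{prop stable min}) that at a stable minimum $\ad_\varphi\colon \Pp[\fh^\C_j]\to\Pp[\fm^\C_{j+1}]\otimes K$ is an isomorphism for all $j>0$, and then observes that for complex $\sG$ one has $\fh^\C\cong\fm^\C$, so $\dim\fm^\C_j=\dim\fm^\C_{j+1}$ for all $j>0$, which together with boundedness of the grading forces $\fm^\C_1=0$ and hence $\varphi=0$; strictly polystable minima are then handled by reducing to a Levi subgroup.
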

We will prove these results using Higgs bundles in Section~\ref{section some comp results}. For $\sG$ a semisimple complex Lie group, the following corollary follows immediately from the two above theorems. It holds in general for complex reductive groups.
\begin{Corollary}\label{cor: complex deform to compact}For $\sG$ a complex reductive Lie group, every representation $\rho\colon \Gamma\to\sG$ can be continuously deformed to a compact representation $\Gamma\to\sH\hookrightarrow \sG$.
\end{Corollary}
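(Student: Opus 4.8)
The plan is to deduce the statement from the injectivity of $\tau$ on $\Xx(\Gamma,\sG)$ together with the bijectivity of $\tau$ for the maximal compact $\sH$. First observe that since $\sG$ is complex and semisimple, its maximal compact subgroup $\sH$ is itself compact and semisimple, and the homotopy equivalence $\sH\simeq\sG$ gives the identification $\Bb_\sH(S)=\Bb_\sG(S)$. Given a representation $\rho\colon\Gamma\to\sG$, set $\omega=\tau([\rho])\in\Bb_\sG(S)$. Because continuously deforming $\rho$ inside $\Xx(\Gamma,\sG)$ preserves the topological invariant, it suffices to produce a compact representation $\rho_0\colon\Gamma\to\sH\hookrightarrow\sG$ with $\tau([\rho_0])=\omega$: injectivity of $\tau$ will then force $\rho_0$ into the same connected component as $\rho$.

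Next I would produce $\rho_0$. Applying Theorem~\ref{thm tau injective compact} to the compact semisimple group $\sH$, the map $\tau\colon\pi_0(\Xx(\Gamma,\sH))\to\Bb_\sH(S)$ is a bijection; in particular every class $\omega\in\Bb_\sH(S)=\Bb_\sG(S)$ is realized, so $\Xx^\omega(\Gamma,\sH)$ is nonempty. Choosing $[\rho_0]\in\Xx^\omega(\Gamma,\sH)$ yields a representation $\rho_0\colon\Gamma\to\sH\hookrightarrow\sG$, and via the inclusion $\Xx^\omega(\Gamma,\sH)\subset\Xx^\omega(\Gamma,\sG)$ noted above we have $\tau([\rho_0])=\omega$ when $\rho_0$ is regarded as a $\sG$-representation.

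To finish, I would compare $\rho$ and $\rho_0$ inside $\Xx(\Gamma,\sG)$. By Theorem~\ref{thm tau injective complex}, $\tau\colon\pi_0(\Xx(\Gamma,\sG))\to\Bb_\sG(S)$ is injective. Since $\tau([\rho])=\omega=\tau([\rho_0])$, the classes $[\rho]$ and $[\rho_0]$ lie in the same connected component of $\Xx(\Gamma,\sG)$. As $\Hom^+(\Gamma,\sG)$ is a real (semi)algebraic set and $\Xx(\Gamma,\sG)$ is its Hausdorff quotient, its connected components are path-connected, so $\rho$ can be joined to $\rho_0$ by a continuous path of representations, which is the asserted deformation to a compact representation.

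The genuinely delicate point is the passage to the general reductive case claimed at the end. When $\sH$ is reductive but not semisimple, Theorem~\ref{thm tau injective compact} provides only injectivity of $\tau$, not surjectivity, so $\Xx^\omega(\Gamma,\sH)$ may a priori be empty for some $\omega$ in the image of $\tau_\sG$. The main obstacle is therefore to establish that $\mathrm{image}(\tau_\sG)\subseteq\mathrm{image}(\tau_\sH)$, i.e., that every component of $\Xx(\Gamma,\sG)$ already contains a compact representation; this is exactly what the reductive refinements of Garc\'{\i}a-Prada--Oliveira supply, rather than the two theorems quoted above.
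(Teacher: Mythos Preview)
Your argument is correct and matches the paper's approach: the paper states that for semisimple complex $\sG$ the corollary ``follows immediately from the two above theorems'' (Theorems~\ref{thm tau injective compact} and~\ref{thm tau injective complex}) and then asserts the reductive case without further justification, which is exactly the structure you have written out, including your correct identification that the reductive extension requires the additional input of Garc\'{\i}a-Prada--Oliveira rather than the two theorems alone. One small presentational point: your opening sentence ``since $\sG$ is complex and semisimple'' should be flagged as a reduction to the semisimple case, since the corollary is stated for reductive $\sG$; otherwise the argument and the paper's are the same.
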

The above corollary says that, for complex groups, the connected components of the character variety are not interesting. For real groups, the situation is more subtle.

\begin{Example}For $\sG=\sP\sSL(2,\R)$, the maximal compact subgroup is $\sH\cong\sSO(2)$. Since a circle bundle on a closed surface is determined by its degree, we have $\Bb_\sH(S)\cong\Z$. Thus,
\begin{gather*}\Xx(\Gamma,\sP\sSL(2,\R))=\coprod\limits_{d\in\Z}\Xx^d(\Gamma,\sP\sSL(2,\R)).\end{gather*}
However, the space $\Xx^d(\Gamma,\sP\sSL(2,\R))$ is empty when $|d|>2g-2$ \cite{MilnorMWinequality}. Moreover, when $|d|\leq 2g-2$, the space $\Xx^d(\Gamma,\sP\sSL(2,\R))$ is nonempty and connected~\cite{TopologicalComponents}. We will prove these statements using Higgs bundles in Section~\ref{section SO1q and Hitchin comp}.

For a $\sP\sSL(2,\R)$ representation $\rho$, the integer invariant can be defined as follows. Pick any $\rho$-equivariant map $f_\rho\colon \widetilde S\to \sP\sSL(2,\R)/\sH\cong\HH^2$. Such maps always exist since $\HH^2$ is contractible. We have a principal $\sH$-bundle $\sP\sSL(2,\R)\to \HH^2$. By equivariance, the pullback bundle $f_\rho^*\sP\sSL(2,\R))$ descends to a circle bundle on $S$. Define $\tau$ to be minus the degree of this bundle:
\begin{gather*}\tau(\rho)=-\deg((f_\rho^*\sP\sSL(2,\R))/\rho(\Gamma)).\end{gather*}
The bundle $\sP\sSL(2,\R)\to\HH^2$ is identified with the unit tangent bundle of $\HH^2$.
If $\rho$ is a Fuchsian representation, then we may choose $f_\rho$ to be the equivariant diffeomorphism uniformizing the Riemann surface $\HH^2/\rho(\Gamma)$. In this case, $\tau$ is given by the degree of the cotangent bundle. Namely, $\tau=2g-2$.
Thus, we have
\begin{gather*}\Fuch(\Gamma)\subset\Xx^{2g-2}(\Gamma,\sP\sSL(2,\R))\sqcup\Xx^{2-2g}(\Gamma,\sP\sSL(2,\R)).\end{gather*}
Since $\Fuch(\Gamma)$ is open and closed and $\Xx^{2g-2}(\Gamma,\sP\sSL(2,\R))$ is connected, the above inclusion is an equality.
\end{Example}

\begin{Example}
For $\sG=\sP\sSL(n,\R)$ the maximal compact subgroup is $\sSO(n)$ if $n$ is odd and $\sSO(n)/\pm\Id$ when $n$ is even. In this case,
\begin{gather}
\label{eq top PSLn bundles}\Bb_{\sP\sSL(n,\R)}(S)\cong H^2(S,\pi_1\sH)\cong \begin{cases}
\Z&\text{if $n=2$,}\\
\Z_2&\text{if $n=2k+1$,}\\
\Z_2\oplus \Z_2&\text{if $n=4k$,}\\
\Z_4&\text{if $n=4k+2$.}
\end{cases}
\end{gather}
For $n=2k+1$, the invariant $\omega\in H^2(S,\Z_2)$ is the second Stiefel--Whitney class of the $\sSO(n)$ bundle.
\end{Example}

\begin{Example}\label{Ex top types SOpq}
For $\sG=\sSO(p,q)$, the maximal compact subgroup is $\sS(\sO(p)\times \sO(q))$. We have
\begin{gather*}\Bb_{\sSO(p,q)}(S)\cong\begin{cases}
H^1(S,\Z_2)\cong\Z_2^{2g}&\text{if $p=q=1$},\\
H^1(S,\Z_2)\times H^2(S,\Z_2)\cong\Z_2^{2g+1}&\text{if $p=1$ and $2<q$},\\
H^1(S,\Z_2)\times H^2(S,\Z_2)\times H^2(S,\Z_2)\cong\Z_2^{2g+2}&\text{if $2<p\leq q$}.
\end{cases}
\end{gather*}
In the above cases, the element of $H^1(S,\Z_2)$ is the first Stiefel--Whitney class of an orthogonal bundle and each element of $H^2(S,\Z_2)$ is the second Stiefel--Whitney class of an orthogonal bundle.
The case of $p=2$ or $q=2$ is slightly more complicated.
\end{Example}

\section{Deforming Fuchsian representations}\label{sec deforming Fuch}
We have seen that the Teichm\"uller space of the surface $S$ is identified with the connected components $\Fuch(\Gamma)$ of the $\sP\sSL(2,\R)$-character variety. As a result, the representations in this component have special geometric significance. Given an embedding $\iota\colon \sP\sSL(2,\R)\to \sG$, we have
\begin{gather*}\iota(\Fuch(\Gamma))\subset \Xx(\Gamma,\sG).\end{gather*}
\begin{Question}
What is the deformation space of $\iota(\Fuch(\Gamma))\subset \Xx(\Gamma,\sG)$ like?
\end{Question}

Here we use the word deformation space of $\iota(\Fuch(\Gamma))$ to mean the connected component of $\Xx(\Gamma,\sG)$ which contains $\iota(\Fuch(\Gamma))$. There are many examples of interesting embeddings of $\sP\sSL(2,\R)$ into other Lie groups, below we discuss some particular interesting ones.
\subsection{Isometry groups of hyperbolic spaces} There is a natural embedding into $\Isom^+(\HH^n)\cong\sSO_0(1,n)$ given by
\begin{gather}\label{eq inclusion into SO(1,n)}
\xymatrix{\sP\sSL(2,\R)\cong\Isom^+(\HH^2)\cong\sSO_0(1,2)\ar[r]^{\ \ \ \ \ \ \iota_{1,n}}& \sSO_0(1,n)\cong\Isom^+(\HH^n).}
\end{gather}
This gives $\iota_{1,n}(\Fuch(\Gamma))\subset\Xx(\Gamma,\sSO_0(1,n))$, and small deformations of these representations are holonomies of complete hyperbolic $n$-manifolds called quasi-Fuchsian manifolds.

\begin{Definition}\label{def quasi fuchsian}A representation $\rho\colon \Gamma\to \Isom(\HH^n)$ is called \emph{convex cocompact} if it is discrete and faithful and $\rho(\Gamma)$ acts cocompactly (properly discontinuously with compact quotient) on a~convex domain in $\HH^n$. The set of quasi-Fuchsian representations $\QFuch(\Gamma)\subset\Xx(\Gamma,\sSO_0(1,n))$ is defined to be
\begin{gather*}\QFuch(\Gamma)=\{[\rho]\in\Xx(\Gamma,\sSO_0(1,n))\,|\, \text{$\rho$ is convex cocompact}\}.\end{gather*}
\end{Definition}
\begin{Remark}If $[\rho]\in\QFuch(\Gamma)$, then can be deformed to $\iota_{1,n}(\Fuch(\Gamma))$. Moreover, the set of convex cocompact representations is open in $\Xx(\Gamma,\sSO_0(1,n))$. Thus, any sufficiently small deformation of a Fuchsian representation in $\iota_{1,n}(\Fuch(\Gamma))$ is quasi-Fuchsian. However, unlike $\Fuch(\Gamma)\subset\Xx(\Gamma,\sSO_0(1,2))$, the set $\QFuch(\Gamma)\subset\Xx(\Gamma,\sSO_0(1,n))$ is \emph{not closed}. Namely, there families of convex cocompact representations whose limit is discrete and faithful, but not convex cocompact.
\end{Remark}
In fact, we have the following:

\begin{Proposition}\label{prop deforming quasi fuchsian to compact}
For $n>2$, any representation $\rho\in\iota_{1,n}(\Fuch(\Gamma))$ can be continuously deformed to a compact representation.
\end{Proposition}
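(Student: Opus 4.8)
The plan is to pass to Higgs bundles and analyze the Fuchsian representation's Higgs bundle directly. First I would write down the $\sSO_0(1,n)$-Higgs bundle of $\iota_{1,n}(\rho)$ using the Cartan decomposition $\fso(1,n)^\C = \fh^\C \oplus \fm^\C$ with $\fh^\C \cong \fso(n,\C)$ and $\fm^\C \cong \Hom(\C,\C^n)$, in which a Higgs bundle is a triple $(I, W, \eta)$ with $I$ a line bundle carrying an orthogonal structure (so $I^2 \cong \Oo$), $W$ a rank $n$ holomorphic orthogonal bundle, and $\eta \in H^0(I^* \otimes W \otimes K)$. For the Fuchsian representation this triple is explicit: $I = \Oo$, the bundle $W = K \oplus \Oo^{n-2} \oplus K^{-1}$ carries the hyperbolic pairing between $K$ and $K^{-1}$ together with the standard form on $\Oo^{n-2}$, and $\eta \colon \Oo \to W \otimes K$ is the constant section landing in the summand $K^{-1} \otimes K \cong \Oo$. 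The essential feature to record is that this is a system of Hodge bundles, i.e.\ a $\VHS$, hence a fixed point of the $\C^*$-action $(E,\Phi) \mapsto (E, t\Phi)$. Consequently the naive idea of scaling the Higgs field to zero does nothing, and this is precisely the subtlety that separates real groups from the complex case of Corollary~\ref{cor: complex deform to compact}.

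Since a polystable Higgs bundle with \emph{vanishing} Higgs field is exactly a polystable $\sH^\C$-bundle, and hence corresponds to a representation into $\sH = \sS(\sO(1)\times\sO(n))$, the Proposition is equivalent to producing a path in the Higgs bundle moduli space from the Fuchsian point to a point with $\eta = 0$. As a sanity check I would first compute the topological invariant: the positive line bundle $I$ is trivial and the negative bundle $W$ is oriented with $w_2(W) \equiv 2g-2 \equiv 0 \pmod 2$, so $\tau(\iota_{1,n}(\rho)) = (0,0) \in H^1(S, \Z_2)\times H^2(S,\Z_2)$ is trivial. This is the class carried by the trivial representation, so there is no topological obstruction and both live in $\Xx^0(\Gamma, \sSO_0(1,n))$; the content is to connect them.

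The real tool is the Hitchin function $f = \|\Phi\|^2$ and the gradient flow of $-\nabla f$. Its critical points are exactly the systems of Hodge bundles, its absolute minimum on this component is $0$ (attained on the nonempty locus $\eta = 0$ of compact representations), and the flow retracts the component onto its minimum set provided there are no local minima with $f > 0$. The heart of the proof is therefore to show that for $n > 2$ the Fuchsian $\VHS$ is \emph{not} a local minimum. Decomposing the deformation complex into $\C^*$-weight spaces, one must exhibit a deformation of strictly negative weight. Here $n > 2$ is decisive: the extra summands $\Oo^{n-2}$ supply a new Higgs-field direction $K \to \Oo$, i.e.\ a nonzero section of $\Hom(K,\Oo)\otimes K \cong \Oo$, which simply does not exist when $n = 2$. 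Turning this direction on moves the representation off the Hodge locus and should contribute the negative weight space that defeats minimality; gradient descent then carries the point down to $\eta = 0$. The contrast with $n = 2$ is instructive: there $W = K \oplus K^{-1}$ admits no such direction, the Fuchsian $\VHS$ is a genuine minimum, and indeed $\Fuch(\Gamma)$ is its own component and cannot be deformed to a compact representation.

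The step I expect to be the main obstacle is exactly this Morse-index computation: verifying that the weight decomposition of the deformation space at the Fuchsian $\VHS$ genuinely has a negative part for every $n > 2$, and that following the descending flow terminates on the zero-Higgs-field locus rather than stalling at an intermediate positive-energy critical point. Making this rigorous amounts to the eigenvalue analysis of the $\C^*$-action on the hypercohomology of the deformation complex, which is the technical core of the argument; the explicit form of $(I, W, \eta)$ recorded above is what should make the relevant cohomology groups computable.
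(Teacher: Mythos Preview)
Your strategy is workable in principle but takes a vastly harder route than the paper, and the gap you yourself flag at the end is genuine and not closed.

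The paper's proof is three lines and uses no Higgs bundles at all: since $\iota_{1,n}$ factors as $\iota_{1,3}$ followed by the inclusion $\sSO_0(1,3)\hookrightarrow\sSO_0(1,n)$, it suffices to treat $n=3$; then one invokes the exceptional isomorphism $\sSO_0(1,3)\cong\sP\sSL(2,\C)$ and applies Corollary~\ref{cor: complex deform to compact} directly. The whole point is that $\sSO_0(1,3)$ happens to be a \emph{complex} group, so the general complex-group result already proven finishes it. You have instead embarked on the Morse-theoretic machinery of Section~6, which is serious overkill here.

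Your claim that the Fuchsian VHS is not a local minimum for $n>2$ is correct (with the natural grading on $W=K\oplus\Oo^{n-2}\oplus K^{-1}$ one finds $\fh^\C_1\cong K^{-1}\otimes\C^{n-2}$ while $\fm^\C_2=0$, so the criterion of Proposition~\ref{prop stable min} fails once $n>2$; incidentally, the relevant positive-weight deformation lives in $H^1(\fh^\C_1)$, i.e., it is a deformation of the holomorphic structure of $W$, not a ``Higgs-field direction $K\to\Oo$'' as you wrote). But that only tells you the downward flow leaves that one point; it could terminate at some other positive-energy fixed point. To rule this out you would have to classify \emph{all} local minima of $f$ on $\Mm(\sSO(1,n))$ and show that for $n>2$ they all have $\eta=0$---a statement of the same depth as Theorem~\ref{Thm local min SOpq}. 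Your proposal identifies this obstacle but does not remove it, so as written it is not a complete proof.
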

\begin{proof}Note that it suffices to prove the statement for $\sSO_0(1,3)$. Recall that there is an isomorphism of Lie groups $\sSO_0(1,3)\cong\sP\sSL(2,\C)$. The result now follows from Corollary~\ref{cor: complex deform to compact}.
\end{proof}

\begin{Remark}Another interesting embedding is given by the isomorphism
\begin{gather*}\sP\sSL(2,\R)\cong\sP\sU(1,1)\end{gather*}
and the embedding $\sP\sU(1,1)\to \sP\sU(1,n)\cong\Isom(\C\HH^n)$ into the isometry group of the complex hyperbolic space. Deformations of $\Fuch(\Gamma)\subset\Xx(\Gamma,\sP\sU(1,n))$ under this embedding satisfy a~rigidity phenomenon \cite{GoldmanMillsonRigidity,ToledoRigidityU1q}. This is a special case of the more general situation of maximal representations into a Hermitian Lie group of non-tube type (see for example~\cite{BIWmaximalToledoAnnals}). We will not discuss this situation further.
\end{Remark}

\subsection{Principal embedding}
Recall that for each dimension $n$, there is a unique irreducible representation
\begin{gather*}\iota_{\rm pr}\colon \ \sSL(2,\R)\to \sSL\big(\R^n\big),\end{gather*} which is given by the $(n-1)^{\rm st}$-symmetric product of the standard representation. Moreover, it is straight forward to check that this induces an embedding
\begin{gather*}\iota_{\rm pr}\colon \ \sP\sSL(2,\R)\to\sP\sSL(n,\R).\end{gather*}
We will call this embedding the \emph{principal embedding}.

More generally, if $\sG$ is a split real Lie group of adjoint type, there is a unique preferred (principal) embedding
\begin{gather}\label{eq principal embedding}\iota_{\rm pr}\colon \ \sP\sSL(2,\R)\to\sG.
\end{gather} We will not go into the Lie theory necessary to define the principal embedding in general, see~\cite{ptds} for more details on the general setup. We will explicitly describe $\iota_{\rm pr}$ for the classical groups.
\begin{Example}\label{ex SO(p,p+1) principal}When $n=2p+1$ the principal embedding $\iota_{\rm pr}\colon \sSL(2,\R)\to\sSL\big(\R^{2p+1}\big)$ preserves a symmetric nondegenerate quadratic form of signature $(p,p+1)$. Thus, we have an embedding
\begin{gather*}\iota_{\rm pr}\colon \ \sP\sSL(2,\R)\to\sSO_0(p,p+1)\subset\sP\sSL(2p+1,\R).\end{gather*}
This is the principal embedding~\eqref{eq principal embedding} for the split group $\sG=\sSO_0(p,p+1)$.

Similarly, when $n=2p$ the principal embedding $\iota_{\rm pr}\colon \sSL(2,\R)\to\sSL(\R^{2p})$ preserves a nondegenerate symplectic form. Thus, we have an embedding
\begin{gather*}\iota_{\rm pr}\colon \ \sP\sSL(2,\R)\to\sP\sSp(2p,\R)\subset\sP\sSL(2p,\R).\end{gather*}
This is the principal embedding \eqref{eq principal embedding} for the split group adjoint $\sG=\sP\sSp(2p,\R)$.
\end{Example}
The deformation space of $\iota_{\rm pr}(\Fuch(\Gamma))\subset\Xx(\Gamma,\sG)$ is called the \emph{Hitchin component} or \emph{Hitchin components}.
\begin{Definition}\label{Def Hit comp}
Let $\sG$ be a simple split real Lie group of adjoint type, a Hitchin component
\begin{gather*}\Hit(\sG)\subset\Xx(\Gamma,\sG)\end{gather*}
is a connected component containing a component of $\iota_{\rm pr}(\Fuch(\Gamma))$.
\end{Definition}
Unlike the embedding $\iota_{1,n}\colon \sP\sSL(2,\R)\to\sSO_0(1,n)$, representations in $\Hit(\sG)$ cannot be deformed to compact representations.

\begin{Theorem}[Hitchin \cite{liegroupsteichmuller}] If $\rho\in\Hit(\sG)$, then $\rho$ cannot be deformed to a compact representation. In particular,
\begin{gather*}|\pi_0(\Xx(\Gamma,\sG))|\geq 1+|\pi_0(\Xx(\Gamma,\sH))|.\end{gather*}
\end{Theorem}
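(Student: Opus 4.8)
The plan is to produce $1+|\pi_0(\Xx(\Gamma,\sH))|$ pairwise distinct connected components of $\Xx(\Gamma,\sG)$: one for each component of the compact character variety, together with $\Hit(\sG)$ as an extra one. Observe that the assertion ``$\rho\in\Hit(\sG)$ cannot be deformed to a compact representation'' is equivalent to saying that the connected component $\Hit(\sG)$ contains no compact representation, i.e.\ that it is none of the components arising from $\Xx(\Gamma,\sH)$. So the whole statement reduces to (i) counting the compact components inside $\Xx(\Gamma,\sG)$ and (ii) showing $\Hit(\sG)$ is not among them.

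First I would account for the compact representations. Since $\sH$ is compact, Theorem~\ref{thm tau injective compact} tells us that $\tau$ is injective on $\pi_0(\Xx(\Gamma,\sH))$, so the components of $\Xx(\Gamma,\sH)$ realize $|\pi_0(\Xx(\Gamma,\sH))|$ distinct classes in $\Bb_\sH(S)$. Because $\sH\simeq\sG$ we have $\Bb_\sH(S)=\Bb_\sG(S)$, and the inclusion $\Xx^\omega(\Gamma,\sH)\subset\Xx^\omega(\Gamma,\sG)$ is compatible with $\tau$. Hence two components of $\Xx(\Gamma,\sH)$ carrying different topological types map into $\sG$-components with different $\tau$-values, which are necessarily distinct. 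This yields $|\pi_0(\Xx(\Gamma,\sH))|$ distinct components of $\Xx(\Gamma,\sG)$, each of which contains a compact representation by construction.

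It remains to show $\Hit(\sG)$ is distinct from all of these, i.e.\ that it contains no compact representation. Here I would pass to Higgs bundles via nonabelian Hodge theory. The key dichotomy is that a reductive representation is conjugate into the maximal compact $\sH$ precisely when the Higgs field of its polystable Higgs bundle vanishes. On the other hand, the Hitchin parametrization identifies $\Hit(\sG)$ with the Hitchin section, along which the Higgs field always contains the principal nilpotent coming from $\iota_{\rm pr}$ and is therefore never zero; at the Fuchsian point $\iota_{\rm pr}(\rho_X)$ it is exactly the principal nilpotent. Consequently no representation in $\Hit(\sG)$ is conjugate into $\sH$, and the count $1+|\pi_0(\Xx(\Gamma,\sH))|$ follows.

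I expect the main obstacle to be precisely this last step. Unlike the case of $\sP\sSL(2,\R)$, where the Euler number already separates the Fuchsian locus from the (Euler number zero) compact representations, for a general split $\sG$ the invariant $\tau$ does \emph{not} distinguish $\Hit(\sG)$ from the compact representations of the same topological type, since $\tau$ is already surjective onto $\Bb_\sG(S)$ on $\Xx(\Gamma,\sH)$. The separation is genuinely finer and must be extracted from the Higgs bundle picture: one has to know that the nonvanishing, regular form of the Higgs field persists throughout the entire component $\Hit(\sG)$, and that such a Higgs bundle is never isomorphic to one with zero Higgs field. This is exactly what the Hitchin parametrization supplies, namely that the Hitchin section is a closed, open, connected subset on which the Higgs field is nowhere gauge-equivalent to zero.
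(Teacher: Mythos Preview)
Your proposal is correct and follows essentially the same route as the paper. The paper does not give a self-contained proof at the point where this theorem is stated; instead it develops the ingredients later (Sections~\ref{section Hitchin component} and beyond): the Hitchin section is shown to be open and closed, hence a whole component, and along it the Higgs field visibly contains the principal nilpotent part (the $1$'s on the subdiagonal in the explicit formula~\eqref{eq image Psi hat}), so it is never gauge-equivalent to zero; combined with the identification of $\Mm(\sH)$ with the zero-Higgs-field locus, this is exactly your argument for~(ii), and your counting argument for~(i) via the injectivity of $\tau$ on $\pi_0(\Xx(\Gamma,\sH))$ is the one the paper uses as well.
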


\begin{Remark}In \cite{AnosovFlowsLabourie}, Labourie showed that all representations in a Hitchin component satisfy a certain dynamical property called the Anosov property which generalizes the notion of convex cocompactness to higher rank Lie groups. As a consequence, every representation in a~Hitchin component is discrete and faithful. Moreover, like $\Fuch(\Gamma)$, representations in a~Hitchin component are holonomies of certain geometric structures on compact manifolds~\cite{guichard_wienhard_2012}. Since $\Hit(\sG)$ shares many features with the Teichm\"uller space of~$S$, it has been called a \emph{higher Teichm\"uller component} (see for example~\cite{BurgerIozziWienhardSurvey} and~\cite{InvitationToHigherTeich}). We will not discuss this perspective any more, however the components discussed in this article which are deformation spaces of Fuchsian representations are intimately related with the field of higher Teichm\"uller theory.
\end{Remark}

For the group $\sP\sSL(n,\R)$, Hitchin also proved that there are no other components.
\begin{Theorem}[Hitchin \cite{liegroupsteichmuller}] For $n>2$ we have
\begin{gather*}|\pi_0(\Xx(\Gamma,\sP\sSL(n,\R))|=\begin{cases}
3&\text{if $n$ is odd},\\
6&\text{if $n$ is even}.
\end{cases}\end{gather*}
\end{Theorem}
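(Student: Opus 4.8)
The plan is to convert the component count into a Higgs-bundle problem via the non-abelian Hodge correspondence and then read off $\pi_0$ from a Morse-theoretic analysis of the Hitchin function. First I would invoke the correspondence (one of the foundational theorems underlying this survey) to identify $\pi_0(\Xx(\Gamma,\sP\sSL(n,\R)))$ with $\pi_0(\Mm)$, where $\Mm$ is the moduli space of polystable $\sP\sSL(n,\R)$-Higgs bundles. Concretely, up to the twisting by the center that distinguishes $\sP\sSL(n,\R)$ from $\sSL(n,\R)$, a point of $\Mm$ is a holomorphic orthogonal bundle $(V,Q)$ together with a Higgs field $\Phi\in H^0(\End(V)\otimes K)$ that is symmetric with respect to $Q$, reflecting the Cartan decomposition $\fsl(n,\R)=\fso(n)\oplus\fm$ of the split real form. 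On $\Mm$ I consider the Hitchin function $f(V,Q,\Phi)=\|\Phi\|^2_{L^2}$.

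Second, I would use that $f$ is proper, bounded below, and is the moment map for the Hamiltonian circle action $\Phi\mapsto e^{i\theta}\Phi$. By Frankel's theorem such a function is a perfect Morse--Bott function, so in the perfect Morse equalities only the index-zero critical submanifolds, namely the local minima, contribute to $b_0$. Hence $|\pi_0(\Mm)|$ is exactly the number of connected components of the subvariety $\Mm_{\min}$ of local minima of $f$. This step reduces the theorem to counting the components of $\Mm_{\min}$, with no separate injectivity argument needed.

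Third, I would decompose $\Mm_{\min}$ into the locus $\{\Phi=0\}$ and the nonzero minima. Since $f=0$ precisely when $\Phi=0$, the first locus consists exactly of polystable Higgs bundles coming from reductive representations into the maximal compact $\sH$; by Theorem~\ref{thm tau injective compact} these constitute $|\Bb_\sH(S)|$ components, which by \eqref{eq top PSLn bundles} together with $\Bb_\sG(S)=\Bb_\sH(S)$ equals $2$ for $n$ odd and $4$ for $n$ even. The remaining nonzero local minima are $\C^*$-fixed points, that is Hodge bundles (complex variations of Hodge structure). The crux is a Hessian computation at these fixed points showing that the only nonzero local minima are the Fuchsian Hodge bundles coming from the principal embedding $\iota_{\rm pr}$; every other $\C^*$-fixed point carries a negative eigendirection of the Hessian and so fails to be a minimum. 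This Fuchsian locus is connected over each component of $\Fuch(\Gamma)\cong\Teich(S)\sqcup\Teich(\bar S)$, and the two pieces merge into a single component of $\Mm_{\min}$ when $n$ is odd but stay distinct when $n$ is even, contributing $1$ and $2$ components respectively.

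Finally, summing gives $2+1=3$ components when $n$ is odd and $4+2=6$ when $n$ is even. The main obstacle is the Hessian computation that excludes every nonzero local minimum other than the Fuchsian ones: the eigenvalue analysis of the Hessian of $f$ at the $\C^*$-fixed variations of Hodge structure is the technical heart of the argument. A secondary but genuine subtlety, which is exactly what separates the odd and even cases, is determining the number of connected components of the nonzero-minimum (Fuchsian) locus, one for $n$ odd and two for $n$ even; this requires careful bookkeeping of the center of $\sSL(n,\R)$ and of the effect of orientation reversal on $\Fuch(\Gamma)$.
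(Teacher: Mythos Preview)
Your overall strategy---translate to Higgs bundles, study the proper function $f=\|\varphi\|^2$, classify local minima, count---is exactly the paper's approach, and your final bookkeeping ($2+1=3$ for $n$ odd, $4+2=6$ for $n$ even, with the parity of the Hitchin component count coming from Remark~\ref{Remark 2 Hitchin comp PSL2n}) matches the paper's Corollary following Theorem~\ref{thm min PSLn}.

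There is, however, a genuine gap in your second step. You invoke Frankel's theorem to conclude that $f$ is a perfect Morse--Bott function and hence that $|\pi_0(\Mm)|$ \emph{equals} the number of components of local minima. But $\Mm(\sP\sSL(n,\R))$ is singular, so Frankel's theorem does not apply; the paper explicitly flags this (``Since the moduli space $\Mm(\sG)$ is usually not smooth, we cannot use the full power of Morse theory\ldots''). Properness of $f$ gives only the \emph{inequality} $|\pi_0(\Mm)|\leq|\pi_0(\Mm_{\min})|$. To upgrade this to an equality the paper argues separately that each component of local minima lies in a distinct component of $\Mm$: the $\varphi=0$ minima are separated by the topological invariant $\tau$ (Theorem~\ref{thm tau injective compact} and~\eqref{eq top PSLn bundles}), and the Hitchin component is shown directly to be open and closed via the Hitchin section (Section~\ref{section Hitchin component}) and is distinct from the compact components because Hitchin representations cannot be deformed to compact ones. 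So your ``no separate injectivity argument needed'' is precisely where the argument breaks; you do need to verify that the components of $\Mm_{\min}$ you have found actually sit in distinct components of $\Mm$, and that is supplied by the Hitchin-section construction rather than by Morse theory.
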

\begin{Remark}\label{Remark 2 Hitchin comp PSL2n}Recall that $\Fuch(\Gamma)\subset\Xx(\Gamma,\sP\sSL(2,\R))$ has two connected components, however, these components are isomorphic via an outer automorphism of $\sP\sSL(2,\R)$. The number of Hitchin components $\Hit(\sP\sSL(n,\R))$ depends on the parity of $n$. Namely, the map $\iota_{\rm pr}\colon \Fuch(\Gamma)\to\Xx(\Gamma,\sP\sSL(n,\R))$ is $2:1$ when $n$ is odd and injective when $n$ is even. There are thus two Hitchin components for $\sG=\sP\sSL(2n,\R)$ and one Hitchin component for $\sG=\sP\sSL(2n+1,\R)$.
\end{Remark}

\begin{Corollary}\label{cor dichotomy hitchin}If $\rho\in\Xx(\Gamma,\sP\sSL(n,\R))$, then there is a dichotomy: either $\rho$ can be deformed to compact representation or $\rho$ can be deformed to a Fuchsian representation in $\iota_{\rm pr}(\Fuch(\Gamma))$.
\end{Corollary}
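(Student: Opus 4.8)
The plan is to establish the dichotomy by a counting argument, matching Hitchin's total count of connected components against the number of components that contain a compact representation plus the number of Hitchin components. Throughout I assume $n>2$, as in the preceding theorems.

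First I would count the components meeting the locus of compact representations. For $n>2$ the maximal compact $\sH$ is semisimple, so Theorem~\ref{thm tau injective compact} shows that $\tau\colon \pi_0(\Xx(\Gamma,\sH))\to\Bb_\sH(S)$ is a bijection; in particular each fiber $\Xx^\omega(\Gamma,\sH)$ is a single connected set. Using $\Bb_\sH(S)=\Bb_\sG(S)$ together with \eqref{eq top PSLn bundles}, the number of such fibers is $2$ when $n$ is odd and $4$ when $n$ is even. Each connected set $\Xx^\omega(\Gamma,\sH)$ lies in exactly one connected component of $\Xx(\Gamma,\sG)$, and components over distinct $\omega$ are distinct because $\tau$ is constant on $\sG$-components. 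Hence the number of components of $\Xx(\Gamma,\sG)$ that contain a compact representation is exactly $|\Bb_\sG(S)|$, namely $2$ for $n$ odd and $4$ for $n$ even.

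Next I would bring in the Hitchin components. By Remark~\ref{Remark 2 Hitchin comp PSL2n} there is one Hitchin component when $n$ is odd and two when $n$ is even; by Hitchin's theorem no representation in a Hitchin component can be deformed to a compact one, so these components are disjoint from the compact-containing ones just counted. Adding the two tallies gives $2+1=3$ for odd $n$ and $4+2=6$ for even $n$, which coincides exactly with $|\pi_0(\Xx(\Gamma,\sP\sSL(n,\R)))|$ from Hitchin's count. Since the compact-containing components and the Hitchin components form two disjoint families whose sizes sum to the total, they must exhaust all of $\pi_0(\Xx(\Gamma,\sP\sSL(n,\R)))$. A representation lying in a compact-containing component deforms to a compact representation, while a representation in a Hitchin component deforms into $\iota_{\rm pr}(\Fuch(\Gamma))$, giving the asserted dichotomy.

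The main obstacle is not a hard estimate but keeping the bookkeeping exact. One must ensure that the compact representations of a fixed topological type $\omega$ occupy precisely one $\sG$-component (no overcounting), that distinct $\omega$ never merge into the same $\sG$-component, and that the Hitchin components stay disjoint from these. All three points follow from results already in hand---the bijectivity of $\tau$ on the compact group, the constancy of $\tau$ on connected components, and Hitchin's non-deformability theorem---so the crux is simply verifying that $|\Bb_\sG(S)|$ plus the number of Hitchin components equals $|\pi_0|$, leaving no component unaccounted for.
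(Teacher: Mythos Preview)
Your argument is correct and is precisely the reasoning the paper intends: the corollary is stated without proof, immediately after Hitchin's two theorems, and the only way to extract it from those two statements is the counting match you carry out---$|\Bb_\sG(S)|$ compact-containing components plus the Hitchin components exhaust the total $|\pi_0|$, with disjointness guaranteed by the non-deformability theorem. Later in the paper (Section~\ref{section some comp results}) the same dichotomy is re-derived via the Morse--Bott classification of local minima (Theorem~\ref{thm min PSLn}), which gives a more intrinsic proof, but at the point where Corollary~\ref{cor dichotomy hitchin} is stated your bookkeeping argument is exactly what is available and what is meant.
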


\begin{Remark}A generalization of the embedding \eqref{eq inclusion into SO(1,n)} is given by
\begin{gather}\label{eq pq embedd}
\iota_{p,q}\colon \ \sSO(p,p-1)\to \sSO(p,q).
\end{gather}
The embedding
\begin{gather*}\xymatrix{\sP\sSL(2,\R)\ar[r]^{\iota_{\rm pr}}&\sSO(p,p-1)\ar[r]^{\iota_{p,q}}&\sSO(p,q)} \end{gather*}
will play an important role in Theorem~\ref{thm dichotomy pq}. In fact, when $q=p$ the principal embedding $\iota_{\rm pr}\colon \sP\sSL(2,\R)\to\sSO(p,p)$ is given by the principal embedding into $\sSO(p,p-1)$ followed by $\iota_{p,p}$
\begin{gather*}\xymatrix{\sP\sSL(2,\R)\ar[r]^{\iota_{\rm pr}}\ar@/_1pc/[rr]_{\iota_{\rm pr}}&\sSO(p,p-1)\ar[r]^{\iota_{p,p}}&\sSO(p,p).}\end{gather*}
\end{Remark}

\section{Higgs bundles}
We now shift our focus to a moduli space of holomorphic objects on a Riemann surface called Higgs bundles. Roughly, a~Higgs bundle is a holomorphic bundle with some extra data, and the moduli space parameterizes isomorphism classes of Higgs bundles which are called polystable. This theory was developed by Hitchin \cite{selfduality,IntSystemFibration} and Simpson~\cite{SimpsonVHS,localsystems}. At first glance, Higgs bundles and surface group representations seem to have little to do with each other. However, a~remarkable theorem, known as the \emph{nonabelian Hodge correspondence}, gives a homeomorphism between the two moduli spaces. Higgs bundles thus provide a powerful tool for addressing certain questions about the topology of the character variety.

\begin{Theorem}[nonabelian Hodge correspondence]\label{Thm: Nonabelian Hodge Correspondence}
Let $S$ be a closed orientable surface of genus at least two. For each Riemann surface structure $X$ on $S$, the moduli space of $\sG$-Higgs bundles on $X$ is homeomorphic to the character variety $\Xx(\pi_1(S),\sG)$. Moreover, the smooth loci of each space are diffeomorphic.
\end{Theorem}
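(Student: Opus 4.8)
The plan is to build the homeomorphism in two halves, each passing through the intermediate category of \emph{flat bundles equipped with a harmonic metric}, and then to glue the halves into mutually inverse maps. Fix the Riemann surface structure $X$ on $S$, write $\Gamma=\pi_1(S)$, and let $\sH<\sG$ be a maximal compact subgroup so that $\sG/\sH$ is the associated symmetric space of nonpositive curvature. The two genuinely analytic inputs I would invoke are Corlette's theorem on the existence of equivariant harmonic maps and the Hitchin--Kobayashi correspondence of Hitchin and Simpson; everything between them is formal bookkeeping of connections and holomorphic structures.

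First I would pass from the character variety to Higgs bundles. A class $[\rho]\in\Xx(\Gamma,\sG)$ is represented by a reductive representation, and this is precisely the condition that the associated flat $\sG$-bundle admits a $\rho$-equivariant harmonic map $f\colon\widetilde{S}\to\sG/\sH$; existence is Corlette's theorem (Donaldson for $\sSL(2,\C)$), with reductivity both necessary and sufficient. Such an $f$ is the same data as a metric reduction to $\sH$, which I would use to split the flat connection $D=\nabla+\Psi$ into its $\sH$-connection part $\nabla$ and an $\fm$-valued one-form $\Psi$. The key computation is that harmonicity of $f$ together with flatness of $D$ is equivalent to Hitchin's equations: the $(0,1)$-part of $\nabla$ endows the bundle with a holomorphic $\sH^\C$-structure, the $(1,0)$-part $\varphi=\Psi^{1,0}$ becomes a holomorphic Higgs field, and the curvature condition becomes the Hermitian--Einstein equation. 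Carrying a solution to Hitchin's equations forces the resulting Higgs bundle to be polystable.

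Conversely, starting from a polystable $\sG$-Higgs bundle $(E,\varphi)$ on $X$, I would invoke the Hitchin--Kobayashi correspondence: polystability is equivalent to the existence of a harmonic metric solving Hitchin's equations, unique up to automorphisms. Running the dictionary backwards, the Chern connection of this metric together with $\varphi$ and its metric adjoint assembles into a flat $\sG$-connection whose holonomy is a reductive representation $\rho\colon\Gamma\to\sG$, well defined up to conjugation. I would then verify that the two constructions are mutually inverse on isomorphism/conjugacy classes, so that they descend to a bijection of moduli spaces; continuity in both directions follows from continuous dependence of the harmonic objects on the data (uniqueness of solutions together with a priori estimates), yielding the homeomorphism.

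Finally, for the smooth loci I would restrict to the irreducible (equivalently, stable and simple) classes, where both moduli spaces are genuine manifolds and the constructions above are real-analytic in the underlying data; identifying the deformation complex governing the two sides then upgrades the homeomorphism to a diffeomorphism. \textbf{The main obstacle} is entirely analytic and concentrated in the two existence theorems: Corlette's harmonic map theorem and the Hitchin--Kobayashi correspondence. Each demands delicate nonlinear PDE analysis, typically the properness or convexity of an energy functional played against the algebraic stability condition, and it is exactly here that reductivity (respectively polystability) is indispensable. The correspondence genuinely fails without these hypotheses, as the non-Hausdorff behaviour of the full $\Hom$-space already signals.
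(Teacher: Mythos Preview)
Your proposal follows the standard two-step architecture (Corlette/Donaldson in one direction, Hitchin--Simpson in the other) and is correct as a sketch. However, you should be aware that the paper does \emph{not} actually prove this theorem: it is explicitly listed among the foundational results that are stated and used but not proven, and the reader is referred to Q.~Li's survey for details. The only trace of an argument in the paper is Remark~\ref{herm metric remark}, which records precisely the same two halves you describe---the harmonic metric on a polystable Higgs bundle yielding a flat connection, and the equivariant harmonic map for a reductive representation yielding a polystable Higgs bundle---without any further elaboration. So your outline is fully consistent with, and considerably more detailed than, what the paper itself provides; there is nothing to compare beyond noting that you and the paper point to the same pair of analytic existence theorems as the genuine content.
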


\begin{Remark}\label{herm metric remark}One direction of the nonabelian Hodge correspondence asserts that, for each polystable $\sG$-Higgs bundle, there is a special metric which can be used to construct a flat $\sG$-connection. For principal bundles a metric is by definition a reduction of structure group to the maximal compact subgroup. The other direction asserts that, for each reductive representation and each choice of Riemann surface structure $X$ on $S$, there is an equivariant harmonic map $\widetilde X\to\sG/\sH$ from the universal cover to the Riemannian symmetric space. From such a map one constructs a polystable $\sG$-Higgs bundle. For more details on the correspondence, we refer the reader to Q. Li's survey article~\cite{QionglingMiniCourseNotes}.
\end{Remark}

\subsection{Definitions}
As before, let $\sG$ be a reductive Lie group with maximal compact $\sH$ and Cartan decomposition $\fg=\fh\oplus\fm$. Complexifying gives an $\Ad_{\sH^\C}$-invariant decomposition $\fg^\C=\fh^\C\oplus\fm^\C$. Fix a compact Riemann surface $X$ with genus $g\geq2$ and let $K$ denote its holomorphic cotangent bundle. Given a principal $\sH^\C$-bundle $P$, let $P\big[\fm^\C\big]$ denote the associated bundle with fiber $\fm^\C$:
\begin{gather*}P\big[\fm^\C\big]=\big(P\times \fm^\C\big)/{\sim},\end{gather*}
where $(p\cdot h,v)\sim(p,\Ad_{h^{-1}}v)$ for $h\in\sH^\C$.

\begin{Definition}A $\sG$-Higgs bundle on $X$ is a pair $(\Pp,\varphi)$ where\samepage
\begin{itemize}\itemsep=0pt
\item $\Pp\to X$ is a holomorphic principal $\sH^\C$-bundle and
\item $\varphi$ is a holomorphic section of the associated bundle $\Pp\big[\fm^\C\big]\otimes K$.
\end{itemize}
The holomorphic section $\varphi$ is called the \emph{Higgs field}.
\end{Definition}

\begin{Example}
If $\sG$ is compact, then $\sG=\sH$ and $\fm=\{0\}$. In this case, a $\sG$-Higgs bundle is just a holomorphic principal $\sH^\C$-bundle. So, for compact groups, the moduli space $\Mm(\sH)$ of Higgs bundles is identical to the moduli space of holomorphic $\sH^\C$-bundles.
\end{Example}

\begin{Example}
If $\sG$ is complex, then $\fg=\fh^\C$ and $\fg^\C=\fh^\C\oplus\fm^\C\cong\fg\oplus\fg$. In this case, a $\sG$-Higgs bundles is a pair $(\Pp,\varphi)$, where $\Pp$ is a holomorphic $\sG$-bundle and $\varphi$ is a holomorphic section of the adjoint bundle $\Pp[\fg]$ twisted by $K$.
\end{Example}

Rather than working with principal bundles, we will usually pick a faithful linear representation of $\sG^\C$ and work with vector bundles. A~faithful representation $\sG^\C\to\sGL(V)$ defines a~representation $\beta\colon \sH^\C\to\sGL(V)$ and an embedding $\fm^\C\hookrightarrow \End(V)$. With this data fixed, a~$\sG$-Higgs bundle $(\Pp,\varphi)$ gives rise to a pair $(E,\Phi)$, where $E\to X$ is the holomorphic vector bundle~$\Pp[V]$ and $\Phi\in H^0(\End(E)\otimes K)$ is given by $\varphi$ under the inclusion $\Pp\big[\fm^\C\big]\otimes K\hookrightarrow \End(\Pp[V])\otimes K$.
\begin{Example}When $\sG=\sSL(n,\C)$ we take $\sG\to\sGL(\C^n)$ to be the standard representation. An $\sSL(n,\C)$-Higgs bundle thus defines a pair $(E,\Phi)$, where $E\to X$ is a holomorphic rank $n$ vector bundle and $\Phi\in H^0(\End(E)\otimes K)$ satisfies $\tr(\Phi)=0$. Moreover, the standard volume form on $\C^n$ is preserved by the standard representation of $\sSL(n,\C)$, and a holomorphic principal $\sSL(n,\C)$-bundle is equivalent to a holomorphic vector bundle $E$ equipped with a holomorphic volume form $\omega\in H^0(\Lambda^nE)$. Thus, an $\sSL(n,\C)$-Higgs bundle is equivalent to a triple $(E,\omega,\Phi)$. Note that the holomorphic volume form $\omega$ is equivalent to a holomorphic trivialization of the determinant line bundle~$\Lambda^n E$. We will usually suppress $\omega$ from the notation.
\end{Example}

\begin{Example}For $\sG=\sSL(n,\R)$ we have $\sH=\sSO(n)$ and the Cartan decomposition is given by
\begin{gather*}\fsl(n,\R)\cong\fso(n)\oplus \mathrm{Sym}_0\big(\R^n\big),\end{gather*}
where $\mathrm{Sym}_0\big(\R^n\big)$ is the vector space of traceless symmetric matrices. Again, using the standard representation of $\sSL(n,\C)$, we see that an $\sSL(n,\R)$-Higgs bundle gives rise to a triple $(E,\omega,\Phi)$ as in the previous example.

Since $\sH^\C=\sSO(n,\C)$, the restriction of the standard representation of $\sSO(n,\C)$ preserves a nondegenerate symmetric complex bilinear form on $\C^n$, a holomorphic principal $\sSO(n,\C)$-bundle\footnote{For $\sO(n,\C)$ a holomorphic principal bundle is equivalent to a pair $(E,Q_E)$.} is equivalent to a triple $(E,\omega,Q_E)$ where $Q_E\in H^0(S^2(E)\otimes K)$ is everywhere nondegenerate. Equivalently, $Q_E$ defines a symmetric holomorphic isomorphism $Q_E\colon E\to E^*$.
Since $\fm^\C=\mathrm{Sym}_0(\C^n)$, the Higgs field $\Phi$ is symmetric with respect to the quadratic form $Q_E$, i.e.,
\begin{gather*}\Phi^TQ_E=Q_E\Phi.\end{gather*}
An $\sSL(n,\R)$-Higgs bundle is thus equivalent to a tuple $(E,\omega,Q_E,\Phi)$.
\end{Example}

\begin{Example}For $\sG=\sSO(p,q)$, we have $\sH=\sS(\sO(p)\times \sO(q))$ and $\fh=\fso(p)\oplus\fso(q)$. With respect to a splitting $\R^{p+q}=\R^p\oplus\R^q$ we may decompose a matrix $X\in\End\big(\R^{p+q}\big)$ as $X=\smtrx{A&B\\C&D}$.The Lie algebra of $\sSO(p,q)$ is given by
\begin{gather*}\fso(p,q)=\left\{\mtrx{A&B\\C&D}\Big|\mtrx{A&B\\C&D}^T\mtrx{\Id&0\\0&-\Id}+\mtrx{\Id&0\\0&-\Id}\mtrx{A&B\\C&D}=0\right\}.\end{gather*}
This implies that $A\in\fso(p)$, $D\in\fso(q)$ and $B=-C^T$, thus the Cartan decomposition is given by
\begin{gather*}\fso(p,q)=(\fso(p)\oplus\fso(q))\oplus \Hom\big(\R^p,\R^q\big).\end{gather*}

Similar to the previous examples, we use the standard representation. Since $\sH^\C=\sS(\sO(p,\C)\times\sO(q,\C))$, the restriction of the standard representation of $\sSO(p+q,\C)$ preserves an orthogonal splitting $\C^{p+q}=\C^p\oplus\C^q$. As in the previous example, a holomorphic principal $\sSO(p+q,\C)$-bundle is equivalent to a triple $(E,\omega,Q_E)$. A holomorphic principal $\sS(\sO(p,\C)\times\sO(q,\C))$-bundle is thus equivalent to a triple $(E,\omega,Q_E)$ which decomposes as
\begin{gather*}(E,\omega,Q_E)=\left(V\oplus W, \omega,\smtrx{Q_V&\\&-Q_W}\right),\end{gather*}
where $V$ and $W$ respectively have rank $p$ and $q$ and quadratic forms $Q_V$ and $Q_W$. Using the Cartan decomposition and the description of the Lie algebra, the Higgs field $\Phi\in H^0(\End(V\oplus W)\otimes K)$ is given by
\begin{gather*}\Phi=\mtrx{0&\eta^\dagger\\\eta&0},\end{gather*}
where $\eta\in H^0(\Hom(V,W)\otimes K)$ and, regarding $Q_V$ and $Q_W$ as isomorphisms $V\to V^*$ and $W\to W^*$ respectively, $\eta^\dagger=-Q_V^{-1}\eta^TQ_W$.

\begin{Remark} Taking the determinant of the isomorphisms $Q_V\colon V\to V^*$ defines an isomorphism of determinant line bundles $\det(V)\cong \det(V^*)$, or equivalently $\det(V)^2\cong\Oo$. Thus, the determinant line bundle of an orthogonal bundle $(V,Q_V)$ on $X$ is one of the $2^{2g}$ order two points in the Jacobian of~$X$. The above volume form $\omega$ defines an isomorphism $\Lambda^{p+q}(V\oplus W)=\Lambda^pV\otimes\Lambda^qW\to\Oo$. Using the orthogonal structures, this implies that $\Lambda^pV\cong\Lambda^q W^*\cong\Lambda^q W$.
\end{Remark}

Since $\sSO(p,q)$-Higgs bundles will be a main object of study we record this in a proposition.
\begin{Proposition}\label{prop SO(p,q)Higgs bundle}
An $\sSO(p,q)$-Higgs bundle on $X$ is equivalent to the data
\begin{itemize}\itemsep=0pt
\item a holomorphic rank $p$ vector bundle $V\to X$,
\item a holomorphic symmetric isomorphism $Q_V\colon V\to V^*$,
\item a holomorphic rank $q$ vector bundle $W\to X$,
\item a holomorphic symmetric isomorphism $Q_W\colon W\to W^*$,
\item a holomorphic isomorphism $\omega\colon \Lambda^pV\to \Lambda^qW^*$,
\item a holomorphic section $\eta\in H^0(\Hom(V,W)\otimes K)$.
\end{itemize}
The $\sSO(p+q,\C)$-Higgs bundle associated to a tuple $(V,Q_V,W,Q_W,\omega,\eta)$ is
\begin{gather*}(E,\omega,Q_E,\Phi)=\left(V\oplus W,~\omega, \mtrx{Q_V&0\\0&-Q_W},\mtrx{0&\eta^\dagger\\\eta&0}\right),\end{gather*}
and the associated $\sSL(p+q,\C)$-Higgs bundle is given by forgetting $Q_E$.
\end{Proposition}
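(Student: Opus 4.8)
The plan is to establish the stated dictionary by unwinding the abstract definition of an $\sSO(p,q)$-Higgs bundle through the two successive reductions of structure group encoded in $\sH^\C=\sS(\sO(p,\C)\times\sO(q,\C))$, together with the standard representation, thereby promoting the computations of the preceding paragraphs into a clean equivalence. \textbf{Step 1 (the orthogonal bundles).} Starting from the principal $\sH^\C$-bundle $\Pp$, I would set $V=\Pp[\C^p]$ and $W=\Pp[\C^q]$ via the standard representations of the two factors, obtaining holomorphic bundles of ranks $p$ and $q$ with $E=\Pp[\C^{p+q}]=V\oplus W$. Since a holomorphic $\sO(n,\C)$-bundle is the same as a pair $(E,Q_E)$ (the footnote), the reduction of $E$ to $\sO(p,\C)\times\sO(q,\C)$ is exactly the pair of nondegenerate holomorphic symmetric isomorphisms $Q_V\colon V\to V^*$ and $Q_W\colon W\to W^*$, assembling to $Q_E=\smtrx{Q_V&0\\0&-Q_W}$.

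\textbf{Step 2 (the special condition).} The remaining reduction to the index-two subgroup $\sS(\sO(p,\C)\times\sO(q,\C))$, cut out by $\det(A)\det(B)=1$, is a section of the associated $\Z_2$-bundle, i.e., a trivialization of $\det V\otimes\det W$. As $Q_V$ and $Q_W$ already force $(\det V)^2\cong\Oo\cong(\det W)^2$, this trivialization is precisely the isomorphism $\omega\colon\Lambda^pV\to\Lambda^qW^*$; by the remark preceding the statement it simultaneously plays the role of the $\sSO(p+q,\C)$-volume form on $E$.

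\textbf{Step 3 (the Higgs field).} Using the Cartan decomposition with $\fm^\C\cong\Hom(\C^p,\C^q)$, the associated bundle satisfies $\Pp[\fm^\C]\cong\Hom(V,W)$, so the Higgs field $\varphi\in H^0(\Pp[\fm^\C]\otimes K)$ is the same datum as a section $\eta\in H^0(\Hom(V,W)\otimes K)$. Placing $\eta$ in the lower-left block of the inclusion $\fm^\C\hookrightarrow\End(\C^{p+q})$ coming from $\fso(p,q)^\C$, the skew relation $B=-C^T$ between the off-diagonal blocks (now read through $Q_V$ and $Q_W$) forces the upper-right block to be $\eta^\dagger=-Q_V^{-1}\eta^TQ_W$, so that $\eta$ alone determines $\Phi=\smtrx{0&\eta^\dagger\\\eta&0}$.

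\textbf{Step 4 (equivalence and the hard part).} To finish I would run the construction in reverse: from a tuple $(V,Q_V,W,Q_W,\omega,\eta)$ build $(E,\omega,Q_E,\Phi)$ as above, check that $(E,\omega,Q_E)$ is a holomorphic $\sSO(p+q,\C)$-bundle whose structure group reduces to $\sH^\C$, that $\Phi$ is $Q_E$-symmetric and lies in $\Pp[\fm^\C]\otimes K$, and that isomorphisms of tuples correspond bijectively to isomorphisms of $\sSO(p,q)$-Higgs bundles. The step I expect to be the main obstacle is the structure-group bookkeeping of Step 2: pinning down that the passage from $\sO(p,\C)\times\sO(q,\C)$ to its special subgroup (equivalently, from $\sO$ to $\sSO$ in the footnote) is captured by the single determinant trivialization $\omega$, and confirming that no additional orientation datum is required beyond the one that $\omega$ already supplies.
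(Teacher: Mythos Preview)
Your proposal is correct and follows essentially the same approach as the paper: the proposition is not given a separate proof but is explicitly ``recorded'' as a summary of the preceding Example, which carries out your Steps~1--3 (the orthogonal splitting $E=V\oplus W$ with $Q_E=\smtrx{Q_V&0\\0&-Q_W}$, the Cartan decomposition $\fm^\C\cong\Hom(\C^p,\C^q)$ yielding $\Phi=\smtrx{0&\eta^\dagger\\\eta&0}$, and the Remark handling the determinant trivialization~$\omega$). Your Step~4 and your worry about the $\sO$-versus-$\sSO$ bookkeeping are exactly what the paper leaves implicit, so there is nothing to add.
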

We will often suppress $Q_V$, $Q_W$ and $\omega$ from the notation and just refer to an $\sSO(p,q)$-Higgs bundle as a triple $(V,W,\eta)$. We will also denote the associated Higgs bundle schematically by
\begin{gather}
\label{eq quiver notation}\xymatrix{V\ar@/_.5pc/[r]_{\eta}&W,\ar@/_.5pc/[l]_{\eta^\dagger}}
\end{gather}
where we have suppressed the twisting by $K$ from the notation.
\begin{Remark}
Recall that the group $\sSO(p,q)$ has two connected components, $\sSO_0(p,q)<\sSO(p,q)$ denotes the connected component of the identity. The maximal compact subgroup of $\sSO_0(p,q)$ is $\sSO(p)\times \sSO(q)$. Thus, an $\sSO(p,q)$-Higgs bundle $(V,W,\eta)$ reduces to an $\sSO_0(p,q)$-Higgs bundle if and only if both $V$ and $W$ have trivial determinant.
\end{Remark}
\end{Example}
\subsection{Stability and the moduli space}
The moduli space of Higgs bundle parameterizes isomorphism classes of Higgs bundles. The isomorphism group for Higgs bundles is called the gauge group. Just as with the character variety, to get a nice moduli we restrict to a special class of Higgs bundles whose gauge orbits are closed.

Given a smooth principal $\sH^\C$-bundle $P\to X$, the $\sH^\C$-gauge group $\Gg_{\sH^\C}$ is the group of bundle automorphisms $f\colon P\to P$. The elements of $\Gg_{\sH^\C}$ are given by sections of an associated bundle of groups $P[\sH^\C]=P\times_{\Ad_{\sH^\C}}\sH^\C$:
\begin{gather*}\Gg_{\sH^\C}= \Omega^0\big(X,P\big[\sH^\C\big]\big).\end{gather*}

Recall that a holomorphic structure on a vector bundle $E$ is equivalent to a Dolbeault operator. That is, a differential ope\-ra\-tor
\begin{gather*}\bar\p_E\colon \ \Omega^0(E)\to\Omega^{0,1}(E)\end{gather*}
so that\footnote{Dolbeault operators must also satisfy the integrability condition $\bar\p_E^2=0$, but this is automatic on a Riemann surface.} $\bar\p_E(fs)=\bar\p f\otimes s+f\bar\p_Es$ for all functions $f\in\Omega^0(\C)$ and sections $s\in\Omega^0(E)$. Note that the $(0,1)$-part of a connection on $E$ defines a Dolbeault operator. In particular, the space of holomorphic structures on $E$ is an infinite-dimensional affine space with underlying vector space $\Omega^{0,1}(\End(E))$.

For principal bundles, an analogous theory holds. Namely a holomorphic structure on a~principal $\sH^\C$-bundle $P\to X$ is equivalent to a section $\bar\p_P\in\Omega^{0,1}\big(P,\fh^\C\big)$ which defines the $(0,1)$-part of a connection. In particular, a holomorphic structure on $P$ defines a~Dolbeault operator on any associated vector bundle. The space of holomorphic structures on $P$ is an infinite-dimensional affine space with the space of basic $\fh^\C$-valued $(0,1)$-forms as underlying vector space. Equivalently, this vector space is given by sections $\Omega^{0,1}\big(X,P\big[\fh^\C\big]\big)$ of the adjoint bundle of $P$.

If we fix a smooth $\sH^\C$-bundle $P\to X$, the set of all Higgs bundles with underlying bundle $P$ is given by
\begin{gather*}\Hh(\sG)=\big\{\big(\bar\p_P,\varphi\big)\,|\,\bar\p_P\varphi=0\big\}.\end{gather*}
Fixing a holomorphic structure on $P$ defines $\Hh(\sG)$ as a \emph{quadratic} subspace of a vector space:
\begin{gather*}\Hh(\sG)\hookrightarrow \Omega^{0,1}\big(P\big[\fh^\C\big]\big)\oplus \Omega^{1,0}\big(P\big[\fm^\C\big]\big).\end{gather*}
\begin{Remark} Note that when $\sG$ is complex $\Omega^{0,1}\big(P\big[\fh^\C\big]\big)\oplus \Omega^{1,0}\big(P\big[\fm^\C\big]\big)\cong\Omega^1(P[\fg])$ since $\fh^\C\oplus \fm^\C\cong\fg\oplus\fg$.
\end{Remark}
For $(\alpha,\psi)\in \Omega^{0,1}\big(P\big[\fh^\C\big]\big)\oplus \Omega^{1,0}\big(P\big[\fm^\C\big]\big)$, we have $\big(\bar\p_P+\alpha,\varphi+\psi\big)\in\Hh(\sG)$ if
\begin{gather*}\bar\p_P\varphi+\bar\p_P\psi+[\alpha,\varphi]+[\alpha,\psi]=0.\end{gather*}
The tangent space is thus given by sections $(\alpha,\psi)$ satisfying this equation to first order:
\begin{gather*}%\label{eq config tangent space}
T_{\bar\p_P,\varphi}\Hh(\sG)=\big\{(\alpha,\psi)\in \Omega^{0,1}\big(P\big[\fh^\C\big]\big)\oplus \Omega^{1,0}\big(P\big[\fm^\C\big]\big)\,|\,\bar\p_P\psi+[\alpha,\varphi]=0\big\}.
\end{gather*}

\begin{Remark}\label{remark complex and symplectic forms}The space of Higgs bundles $\Hh(\sG)$ has a natural complex structure given by
\begin{gather*}I(\alpha,\psi)=({\rm i}\alpha,{\rm i}\psi).\end{gather*}
When $\sG$ is complex, $\Hh(\sG)$ also has a natural complex symplectic form given by
\begin{gather*}\Omega_I^\C((\alpha_1,\psi_1),(\alpha_2,\psi_2))={\rm i}\int_X\tr(\psi_2\wedge\alpha_1-\psi_1\wedge \alpha_2).\end{gather*}
For real groups $\sG$, it can be shown that $\Hh(\sG)\subset\Hh\big(\sG^\C\big)$ is a Lagrangian subspace.
\end{Remark}

The gauge group $\Gg_{\sH^\C}$ acts on $\Hh(\sG)$ by pullback, namely for $g\in\Gg_{\sH^\C}$
\begin{gather*}\big(\bar\p_P,\varphi\big)\cdot g=\big(\Ad_{g^{-1}}\bar\p_P,\Ad_{g^{-1}}\varphi\big).\end{gather*}
The orbits of the gauge group are not closed, and, to form a nice moduli space, we need a notion of (poly)stability. The moduli space $\Mm(\sG)$ of $\sG$-Higgs bundles is then defined to be the set $\Gg_{\sH^\C}$-orbits of polystable $\sG$-Higgs bundles. The orbits of the $\Gg_{\sH^\C}$-action on $\Hh(\sG)^{\rm ps}$ are closed (in an appropriate space) and the moduli space $\mathcal{M}(\sG)$ becomes a Hausdorff topological space.\footnote{For technical reasons, one needs to work with suitable Sobolev completions to give the moduli space a topology; see \cite{AtiyahBottYangMillsEq}, and also \cite[Section~8]{HauselThaddeusMirror} where the straightforward adaptation to Higgs bundles is discussed in the case $\sG=\sGL(n,\C)$.}
\begin{Remark}\label{Rem: symplectic form} Using the harmonic metric from Remark~\ref{herm metric remark}, one can also define a symplectic structure $\omega_I$ on $\Mm(\sG)$ which is a K\"ahler form for the complex structure $I$. In fact, when $\sG$ is complex, the moduli space $\Mm(\sG)$ is hyper-K\"ahler. We will not focus on this structure in this article.
\end{Remark}

In general, the notion of stability involves considering how all holomorphic structure group reductions of an $\sH^\C$-bundle to a parabolic subgroup interact with the Higgs field (see \cite{HiggsPairsSTABILITY}).
Instead of developing this theory in general, we will develop the appropriate stability conditions in the vector bundle situation.

Recall that a $\sGL(n,\C)$-Higgs bundle is equivalent to a rank $n$ holomorphic vector bundle $E$ and a section $\Phi\in H^0(\End(E)\otimes K)$. For $\sSL(n,\C)$ the bundle $E$ is equipped with a trivialization of $\Lambda^nE$, thus, $\deg(E)=0$.
\begin{Definition}\label{def Sln stab}
An $\sSL(n,\C)$-Higgs bundle $(E,\Phi)$ is
\begin{itemize}\itemsep=0pt
 \item semistable if for all proper holomorphic subbundles $F\subset E$ such that $\Phi(F)\subset F\otimes K$ we have $\deg(F)\leq0$,
 \item stable if for all proper holomorphic subbundles $F\subset E$ such that $\Phi(F)\subset F\otimes K$ we have $\deg(F)<0$, and
 \item polystable if $(E,\Phi)=\bigoplus_j(E_j,\Phi_j)$ with $(E_j,\Phi_j)$ stable and $\deg(E_j)=0$ for all $j$.
 \end{itemize}
\end{Definition}
For general groups $\sG$ the notion of semistability and polystability is functorial in the sense that if $\sG$ is a real form of a reductive subgroup of $\sSL(n,\C)$, then a $\sG$-Higgs bundle is semistable (respectively polystable) if and only if the associated $\sSL(n,\C)$-Higgs bundle is semistable (respectively polystable).
Moreover, the set of semistable $\sG$-Higgs bundles is open in $\Hh(\sG)$.

Let $\Hh^{\rm ps}(\sG)\subset\Hh(\sG)$ denote the set of polystable Higgs bundles. The gauge group $\Gg_{\sH^\C}$-preserves $\Hh^{\rm ps}(\sG)$, and the gauge orbits in $\Hh^{\rm ps}(\sG)$ are closed in the open set of semistable $\sG$-Higgs bundles. We define the moduli space $\Mm(\sG)$ to be the quotient space
\begin{gather*}\Mm(\sG)=\Hh^{\rm ps}(\sG)/\Gg_{\sH^\C}.\end{gather*}
We note that the complex structure $I$ (and the complex symplectic form $\Omega_I^\C$ when $\sG$ is complex) from Remark~\ref{remark complex and symplectic forms} are preserved by the gauge group action and thus descend to the moduli space.

For the general notion of stability, it is not the case that a $\sG$-Higgs bundle is stable if and only if the associated $\sSL(n,\C)$-Higgs bundle is stable. However, one can detect stable $\sG$-Higgs bundles inside of the set of polystable Higgs bundles with the following proposition.
\begin{Proposition}\label{Prop stability for GC ss}Let $\sG$ be a real form of a complex semisimple subgroup of $\sSL(n,\C)$. A $\sG$-Higgs bundle $(\Pp,\varphi)$ is stable if it is polystable and has finite automorphism group. Moreover, the set of stable $\sG$-Higgs bundles is open in $\Hh(\sG)$.
\end{Proposition}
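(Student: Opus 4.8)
The plan is to recast both claims in terms of the infinitesimal automorphisms of the pair. An element of the Lie algebra $\aut(\Pp,\varphi)$ of $\Aut(\Pp,\varphi)$ is a holomorphic section $s\in H^0(\Pp[\fh^\C])$ with $[s,\varphi]=0$, and since $\sG^\C$ is semisimple its centre is finite; hence $(\Pp,\varphi)$ has finite automorphism group precisely when $\aut(\Pp,\varphi)=0$. I would prove the first assertion in its \emph{contrapositive} form: a polystable $\sG$-Higgs bundle that fails to be stable carries a nonzero such section, so its automorphism group is positive-dimensional.

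To produce this section I would invoke the general definition of stability via reductions to parabolic subgroups of $\sH^\C$ (the framework of \cite{HiggsPairsSTABILITY}). If $(\Pp,\varphi)$ is polystable but not stable, there is a proper parabolic $\sP<\sH^\C$ and a $\varphi$-compatible reduction of $\Pp$ to $\sP$ of degree zero; polystability then provides a further reduction to a Levi $\sL<\sP$, so that $\varphi$ is a section of the subbundle modelled on $\fm^\C\cap\mathfrak{l}^\C$. The element $s_0\in\mathfrak{z}(\mathfrak{l}^\C)$ corresponding to the antidominant character is $\Ad_{\sL}$-invariant, hence defines a nonzero holomorphic section $s\in H^0(\Pp[\fh^\C])$; because $s_0$ is central in $\mathfrak{l}^\C$ and $\varphi$ takes values in $\mathfrak{l}^\C$, we have $[s,\varphi]=0$. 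Then $\exp(ts)$ is a one-parameter family of automorphisms of $(\Pp,\varphi)$, so $\Aut(\Pp,\varphi)$ is not finite, as desired. This is the crux of the first assertion, and the only subtlety is checking that the grading element genuinely descends to a global holomorphic section commuting with $\varphi$, which is exactly its centrality in the Levi.

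For openness I would first note that stability implies semistability and that the semistable locus is already known to be open in $\Hh(\sG)$, so it suffices to show stability is open among semistable pairs. I would argue by contradiction: if a stable $(\Pp_0,\varphi_0)$ were a limit of non-stable pairs $(\Pp_i,\varphi_i)$, each $(\Pp_i,\varphi_i)$ would admit a $\varphi_i$-compatible destabilizing parabolic reduction, and I would extract a limiting destabilizing reduction of $(\Pp_0,\varphi_0)$, contradicting its stability. The main obstacle is precisely this limiting step: one must know that the destabilizing reductions which can occur form a bounded family, so that a convergent subsequence exists and its limit still destabilizes $(\Pp_0,\varphi_0)$. This boundedness, equivalent to the upper semicontinuity of the maximal slope of a $\varphi$-invariant subobject, is the standard technical input, and I would cite it rather than reprove it. I would also remark that semisimplicity is essential here: for merely reductive $\sG$ the centre contributes a positive-dimensional automorphism group to every Higgs bundle, and the statement must then be corrected by quotienting out this centre.
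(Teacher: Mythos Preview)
The paper does not prove this proposition. As the introduction explicitly states, this is a survey that does not prove the foundational theorems, and Proposition~\ref{Prop stability for GC ss} is one such result: it is stated as a device for recognizing stable $\sG$-Higgs bundles, with the general stability framework deferred to~\cite{HiggsPairsSTABILITY}, but no argument is supplied in the text. There is therefore nothing in the paper to compare your proposal against.

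That said, your outline is the standard one and is essentially what one finds in~\cite{HiggsPairsSTABILITY}. The contrapositive for the first claim is correct: a strictly polystable pair admits a reduction to a proper Levi $\sL$ of a parabolic of $\sH^\C$, and any nonzero element of $\mathfrak{z}(\mathfrak{l}^\C)$ defines a global holomorphic section of $\Pp[\fh^\C]$ commuting with $\varphi$, hence a one-parameter subgroup of $\Aut(\Pp,\varphi)$. Your identification of finiteness of $\Aut(\Pp,\varphi)$ with vanishing of the space of infinitesimal automorphisms, using that the centre of $\sG^\C$ is finite, is exactly why the semisimplicity hypothesis is present, and your closing remark about the reductive case is apt. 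For openness, the semicontinuity/boundedness argument you sketch is again the conventional one; citing it rather than reproving it is appropriate. In short, your proposal is a correct and orthodox proof of a statement the paper chose to leave unproved.
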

\begin{Remark}Note that if $(\Pp,\varphi)$ is a $\sG$-Higgs bundle whose associated $\sSL(n,\C)$-Higgs bundle is stable as an $\sSL(n,\C)$-Higgs bundle, then $(\Pp,\varphi)$ is stable as a $\sG$-Higgs bundle.
\end{Remark}
 Let $\Hh^{s}(\sG)\subset\Hh^{\rm ps}(\sG)$ be the stable locus, the quotient
 \begin{gather*}\Hh^{s}/\Gg_{\sH^\C}\subset\Mm(\sG)\end{gather*}
 is an orbifold. At a stable Higgs bundle one can show that the real dimension of the tangent space to $T_{[\bar\p_P,\varphi]}\Mm(\sG)$ is $\dim_\R(\sG)(2g-2)$ (see Remark \ref{rem dim of tangent space}). Thus the real dimension of $\Mm(\sG)$ is given by $\dim_\R(\sG)(2g-2)$.

\section[$\sSO(1,q)$-Higgs bundles especially when $q=2$]{$\boldsymbol{\sSO(1,q)}$-Higgs bundles especially when $\boldsymbol{q=2}$}\label{section SO1q and Hitchin comp}

In this section we will describe the moduli space of $\sSO(1,q)$-Higgs bundles and $\sSO_0(1,q)$-Higgs bundles. When $q=2$ we have $\sSO_0(1,2)\cong\sP\sSL(2,\R)$. In this case we will recall Hitchin's parameterization of all but one of the components of $\Mm(\sSO_0(1,2))$. In particular, we recall the Higgs bundle parameterization of Teichm\"uller space.

Recall from Proposition \ref{prop SO(p,q)Higgs bundle} that an $\sSO(1,n)$-Higgs bundle consists of a tuple $(V,Q_V,W,Q_W$, $\omega,\eta)$, where $\rk(V)=1$ and $\rk(W)=q$.
We can take $(V,Q_V)=(\Lambda^nW,\det(Q_W))$ and $\omega=\det(Q_W)\colon V\to\Lambda^qW^*$. Thus, such a tuple is determined by the triple $(W,Q_W,\eta)$, where
\begin{gather*}\eta\in H^0\big(W\otimes (\Lambda^qW)^{-1}\otimes K\big).\end{gather*}
Using the notation from \eqref{eq quiver notation}, the associated $\sSL(1+q,\C)$-Higgs bundle is given by
\begin{gather*}\xymatrix{\Lambda^nW\ar@/_.5pc/[r]_{\eta}&W.\ar@/_.5pc/[l]_{\eta^\dagger}}\end{gather*}

When $q=1$, we have $\eta\in H^0(K)$ and the first Stiefel--Whitney class $sw_1(W)\in H^1(X,\Z_2)$ of $W$ labels the components of $\Mm(\sSO(1,1))$. Namely,
\begin{gather*}\Mm(\sSO(1,1))=\coprod\limits_{sw_1\in H^1(X,\Z_2)}\Mm_{sw_1}(\sSO(1,1)),\end{gather*}
and each space $\Mm_{sw_1}(\sSO(1,1))$ is parameterized by $H^0(K)$.

For $q>1$, the first and second Stiefel--Whitney classes $(sw_1,sw_2)\in H^1(X,\Z_2)\times H^2(X,\Z_2)$ of $(W,Q_W)$ give a decomposition of the moduli space
\begin{gather*}\Mm(\sSO(1,n))=\coprod\limits_{sw_1,sw_2}\Mm_{sw_1}^{sw_2}(\sSO(1,n)).\end{gather*}

The first Stiefel--Whitney class of $W$ vanishes if and only if the $\sO(q,\C)$-bundle reduces to $\sSO(q,\C)$. Thus, for $q=2$ and $sw_1=0$, the bundle $W$ reduces to an $\sSO(2,\C)$-bundle. Since $\C^*\cong \sSO(2,\C)$, in this case the degree of the $\C^*$-bundle provides a refinement of the second Stiefel--Whitney class. More precisely, if $sw_1(W,Q_W)=0$, then there is a line bundle $L\in\Pic(X)$ such that
\begin{gather*}(W,Q_W)\cong\left(L\oplus L^{-1},\mtrx{0&1\\1&0}\right).\end{gather*}
The integer $\deg(L)$ satisfies $sw_2(W,Q_W)=\deg(L)\mod 2$, and the isomorphism switching $L$ with $L^{-1}$ preserves the $\sO(2,\C)$-structure. Thus, $|\deg(L)|\in \N$ which is a well defined invariant of $\sO(2,\C)$-bundles with vanishing~$sw_1$.

This gives a decomposition of the moduli space as
\begin{gather*}\coprod\limits_{sw_1\neq0,sw_2}\Mm^{sw_2}_{sw_1}(\sSO(1,2))~\amalg~ \coprod\limits_{d\in\N}\Mm_d(\sSO(1,2)).\end{gather*}
For Higgs bundles in $\Mm_d(\sSO(1,2))$ the splitting $W=L\oplus L^{-1}$ gives a decomposition of the Higgs field $\eta\colon \Oo\to W\otimes K$ as
\begin{gather*}\eta=\mtrx{\beta\\\gamma}\colon \ \Oo\to LK\oplus L^{-1}K,\end{gather*}
where $\beta\in H^0(LK)$ and $\gamma\in H^0\big(L^{-1}K\big)$.
Using $Q_W=\smtrx{0&1\\1&0}$, we can write the associated $\sSL(3,\C)$-Higgs bundle schematically as
\begin{gather}
\label{eq sl3 schematic}\xymatrix{L\ar@/_1pc/[r]_\gamma&\Oo\ar@/_1pc/[r]_\gamma\ar@/_1pc/[l]_\beta&L^{-1},\ar@/_1pc/[l]_\beta}
\end{gather}
where we recall that we suppress the twisting by $K$ from the notation.

The stability condition limits the objects we are considering.

\begin{Proposition}If $\big(\Oo,L\oplus L^{-1},\smtrx{0&1\\1&0},\smtrx{\beta\\\gamma}\big)$ is a polystable $\sSO(1,2)$-Higgs bundle with vanishing first Stiefel--Whitney class, then $|\deg(L)|\leq 2g-2$. Moreover, if $\deg(L)\in(0,2g-2]$, then $\gamma\neq0$ and if $\deg(L)\in[2-2g,0)$, then $\beta\neq0$.
\end{Proposition}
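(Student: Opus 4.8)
The plan is to pass to the associated $\sSL(3,\C)$-Higgs bundle, where (semi)stability is tested purely against $\Phi$-invariant subbundles, and then to read off both conclusions from a single coordinate line subbundle. By the functoriality of semistability and polystability recorded just before Definition~\ref{def Sln stab}, the given $\sSO(1,2)$-Higgs bundle is polystable (hence semistable) if and only if the associated $\sSL(3,\C)$-Higgs bundle $(E,\Phi)$ is, where $E=L\oplus\Oo\oplus L^{-1}$ and, in this ordered splitting, the schematic \eqref{eq sl3 schematic} gives
\begin{gather*}
\Phi=\mtrx{0&\beta&0\\\gamma&0&\beta\\0&\gamma&0}.
\end{gather*}
Since the $\sO(2,\C)$-structure is preserved by the isomorphism exchanging $L$ and $L^{-1}$, which also swaps $\beta\leftrightarrow\gamma$, it is enough to handle $d:=\deg(L)\geq 0$: the case $d<0$ follows by this symmetry with the roles of $\beta$ and $\gamma$ reversed, and $d=0$ makes the bound $|\deg(L)|\leq 2g-2$ vacuous.

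Assume then that $d>0$. First I would prove that $\gamma\neq 0$. If $\gamma=0$, then the first column of $\Phi$ vanishes, so $\Phi(L)=0\subset L\otimes K$ and $L$ is a proper $\Phi$-invariant subbundle. As $\deg(L)=d>0$, this violates the semistability inequality $\deg(F)\leq 0$ of Definition~\ref{def Sln stab}, a contradiction. Hence $\gamma\neq 0$, which is the first of the two asserted nonvanishing statements; the statement $\beta\neq 0$ for $\deg(L)\in[2-2g,0)$ is its mirror image under the symmetry above.

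Finally, the degree bound is immediate from this nonvanishing: since $\gamma$ is a nonzero holomorphic section of $L^{-1}K$, the line bundle $L^{-1}K$ is effective, so $\deg\big(L^{-1}K\big)=2g-2-d\geq 0$, that is $d\leq 2g-2$. Combined with the symmetric statement for $d<0$, this yields $|\deg(L)|\leq 2g-2$ in all cases. I do not expect a serious obstacle here; the only points requiring care are the bookkeeping that reduces $\sSO(1,2)$-stability to $\sSL(3,\C)$-stability via functoriality, and the verification that $L$ genuinely fails to be $\Phi$-invariant precisely when $\gamma\neq 0$, so that semistability cannot be salvaged when $d>0$ and $\gamma=0$.
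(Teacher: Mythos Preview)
Your proof is correct and follows essentially the same route as the paper's own argument: pass to the associated $\sSL(3,\C)$-Higgs bundle, observe that $L$ is a $\Phi$-invariant line subbundle when $\gamma=0$ (violating semistability for $\deg(L)>0$), and then read off the degree bound from the existence of a nonzero section of $L^{-1}K$. The only differences are cosmetic: you write out the matrix of $\Phi$ and the symmetry reduction explicitly, whereas the paper compresses the $\deg(L)<0$ case into a single ``similarly''.
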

\begin{proof}Consider the associated $\sSL(3,\C)$-Higgs bundle \eqref{eq sl3 schematic}. By stability, if $\deg(L)>0$ then $\gamma\neq 0$ since otherwise $L$ would define a positive degree invariant subbundle. But, $\gamma\in H^0\big(L^{-1}K\big)$ so if $\deg(L)>2g-2$ then $\gamma=0$, contradicting stability. Similarly, if $\deg(L)<0$, then stability forces $\beta\neq0$ and we conclude $\deg(L)>2-2g$.
\end{proof}

For $d=|\deg(L)|>0$, we can parameterize the moduli space $\Mm_{d}(\sSO(1,2))$, this was done by Hitchin in \cite{selfduality} for the group $\sP\sSL(2,\R)$.
\begin{Theorem}[Hitchin \cite{selfduality}]\label{thm so12 comp} For $d>0$, the moduli space $\Mm_d(\sSO(1,2))$ is smooth and diffeomorphic to the total space of a rank $(d+g-1)$-complex vector bundle over the $(2g-2-d)$-symmetric product $\Sym^{2g-2-d}(X)$ of the Riemann surface~$X$.
\end{Theorem}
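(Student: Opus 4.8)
The plan is to realize $\Mm_d(\sSO(1,2))$ as the total space of a vector bundle by using the zero divisor of $\gamma$ to sweep out the base and the section $\beta$ to fill out the fibers. By the discussion preceding the theorem, a point of $\Mm_d(\sSO(1,2))$ with $d>0$ is represented by data $(L,\beta,\gamma)$ with $\deg(L)=d$, $\beta\in H^0(LK)$ and $\gamma\in H^0(L^{-1}K)$, modulo the residual gauge group $\C^*\cong\sSO(2,\C)$ acting by $(\beta,\gamma)\mapsto(\mu\beta,\mu^{-1}\gamma)$; the preceding Proposition guarantees $\gamma\neq0$ and $d\leq 2g-2$, so that $\deg(L^{-1}K)=2g-2-d\geq0$. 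First I would define $\Psi\colon\Mm_d(\sSO(1,2))\to\Sym^{2g-2-d}(X)$ by sending $[(L,\beta,\gamma)]$ to the divisor $D$ of the nonzero section $\gamma$. Rescaling $\gamma$ leaves its divisor unchanged, so $\Psi$ descends to gauge orbits, and the essential point is that $D$ reconstructs the line bundle: a nonvanishing $\gamma$ exhibits an isomorphism $L^{-1}K\cong\Oo(D)$, hence $L\cong K\otimes\Oo(-D)$.

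Next I would compute the fibers of $\Psi$. Over a point $D\in\Sym^{2g-2-d}(X)$ the bundle $L=K\otimes\Oo(-D)$ is determined, and after using the $\C^*$-gauge to normalize $\gamma$ to the tautological section with divisor $D$, the only remaining freedom is $\beta\in H^0(LK)=H^0(K^2\otimes\Oo(-D))$. Since $\deg(K^{-1}\otimes\Oo(D))=-d<0$ we have $h^1(K^2\otimes\Oo(-D))=h^0(K^{-1}\otimes\Oo(D))=0$, so Riemann--Roch gives $h^0(K^2\otimes\Oo(-D))=(2g-2+d)-g+1=d+g-1$, independent of $D$. Hence each fiber is a complex vector space of the asserted rank $d+g-1$.

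To assemble these fibers into a genuine holomorphic vector bundle I would introduce the universal divisor $\mathcal D\subset\Sym^{2g-2-d}(X)\times X$ and push the sheaf $p_X^*K^2\otimes\Oo(-\mathcal D)$ forward along the projection $\pi$ to $\Sym^{2g-2-d}(X)$. Because the fiberwise first cohomology vanishes and $h^0$ is constant, Grauert's cohomology-and-base-change theorem shows $\pi_*\big(p_X^*K^2\otimes\Oo(-\mathcal D)\big)$ is locally free of rank $d+g-1$, and the construction above identifies its total space with $\Mm_d(\sSO(1,2))$. For smoothness, note that $\gamma\neq0$ forces any stabilizer $\mu$ to satisfy $\mu^{-1}\gamma=\gamma$, hence $\mu=1$; thus every such Higgs bundle has trivial automorphism group and is stable by Proposition~\ref{Prop stability for GC ss}. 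Combined with the smoothness of the symmetric product of a curve, the total space is a smooth complex manifold.

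The step I expect to be the main obstacle is upgrading the fiberwise bijection to a biholomorphism: one must verify that the universal-family construction genuinely matches the complex-analytic structure on $\Mm_d(\sSO(1,2))$ coming from the gauge-theoretic quotient, so that $\Psi$ together with the $\beta$-coordinate is holomorphic with holomorphic inverse, hence a diffeomorphism. The clean identification of the free $\C^*$-quotient of $H^0(LK)\times\big(H^0(L^{-1}K)\setminus\{0\}\big)$ with the single vector space $H^0(LK)$ must also be carried out with care. As a consistency check, $\dim_\C=(2g-2-d)+(d+g-1)=3g-3=\tfrac12\dim_\R\Mm(\sSO(1,2))$, and the extreme case $d=2g-2$ recovers $L=K$, base $\Sym^0(X)=\{\mathrm{pt}\}$, and fiber $H^0(K^2)$---exactly Hitchin's parameterization of Teichm\"uller space by holomorphic quadratic differentials.
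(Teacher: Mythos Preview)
Your approach is essentially the same as the paper's: map to the symmetric product via the divisor of $\gamma$, recover $L$ from the divisor, and identify the fiber with $H^0(LK)$; smoothness follows because $\gamma\neq0$ kills the $\C^*$-stabilizer. The paper's proof is in fact sketchier than yours---it simply asserts the fiber has dimension $d+g-1$ without the Riemann--Roch/Serre-duality argument you give, and it stops after identifying the fibers as vector spaces, never invoking a Grauert-type result to show they patch into a vector bundle. Your use of the universal divisor and cohomology-and-base-change is a genuine strengthening of the exposition. Since the statement only asks for a diffeomorphism, your worry about matching the gauge-theoretic complex structure to the one on the total space is more than the paper itself addresses; at the level of the paper's argument, your proof is complete.
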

\begin{proof}By the above discussion, a point in $\Mm_d(\sSO(1,2))$ is determined by a triple $(L,\gamma,\beta)$ where $L\in\Pic^d(X)$, $\gamma\in H^0\big(L^{-1}K\big)\setminus\{0\}$ and $\beta\in H^0(LK)$.
The $\sS(\sO(1,\C)\times \sO(2,\C))$-bundle is given by
\begin{gather*}(V,Q_V,W,Q_W)=\left(\Oo,\mtrx{1},L\oplus L^{-1},\mtrx{0&1\\1&0}\right),\end{gather*}
and the Higgs field is $\eta=\smtrx{\beta\\\gamma}\colon V\to W\otimes K$.

For two triples $(L,\beta,\gamma)$ and $(L',\beta',\gamma')$ to define isomorphic $\sSO(1,2)$-Higgs bundles it is necessary that $|\deg(L)|=|\deg(L')|$. Thus we may assume $L=L'$ as elements $\Pic^d(X)$. The remaining holomorphic gauge transformation of the $\sS(\sO(1,\C)\times\sO(2,\C))$ bundle is given by
\begin{gather*}(g_V,g_W)=\left(1,\mtrx{\lambda&0\\0&\lambda^{-1}}\right),\end{gather*}
for $\lambda\in\C^*$. This gauge transformation acts on the Higgs field by
\begin{gather*}g_W^{-1}\eta g_V=\mtrx{\lambda^{-1}&0\\0&\lambda}\mtrx{\beta\\\gamma}\mtrx{1}=\mtrx{\lambda^{-1}\beta\\\lambda\gamma}.\end{gather*}
In particular, we note that the automorphism group of such an $\sSO(1,2)$-Higgs bundle is trivial since $\gamma\neq0$. Thus, the moduli space $\Mm_d(\sSO(1,2))$ is smooth and given by $\C^*$-equivalence classes $[L,\beta,\gamma]$ where $(L,\beta,\gamma)\sim (L',\beta',\gamma')$ if and only if $L=L'\in\Pic^d(X)$, $\beta=\lambda\beta'$ and $\gamma=\lambda^{-1}\gamma'$ for $\lambda\in\C^*$.

Recall that the space of effective divisors on $X$ of degree $n$ is given by the $n^{\rm th}$-symmetric product $\Sym^n(X)$. Taking the projective class of $\gamma\in H^0\big(L^{-1}K\big)\setminus\{0\}$ defines a surjective map to the space of effective degree $2g-2-d$ divisors on $X$:
\begin{gather*}\xymatrix@R=0em{\Mm_d(\sSO(1,2))\ar[r]&\Sym^{2g-2-d}(X),\\[L,\gamma,\beta]\ar@{|->}[r]&[\gamma].}\end{gather*}
We claim that the fiber of this map is a vector space of rank $(d+g-1)$. Denote by $\Oo([\gamma])$ the line bundle associated to the divisor~$[\gamma]$. The line bundle $L$ is given by $L=\Oo([\gamma])^{-1}K$ and $\beta\in H^0\big(\Oo([\gamma])^{-1}K^2\big)$. Thus $L$ is determined by $[\gamma]$ and $\beta$ can be any element of the $(d+g-1)$-dimensional vector space $H^0\big(\Oo([\gamma])^{-1}K^2\big)$.
\end{proof}

We now collect many corollaries of the above theorem.
\begin{Corollary}For $d>0$ the moduli space$\Mm_d(\sSO(1,2))$ is connected and homotopy equivalent to the symmetric product $\Sym^{2g-2-d}(X)$.
\end{Corollary}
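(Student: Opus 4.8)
The plan is to leverage Theorem~\ref{thm so12 comp} directly, since it already provides an explicit diffeomorphism between $\Mm_d(\sSO(1,2))$ and the total space of a rank $(d+g-1)$-complex vector bundle over $\Sym^{2g-2-d}(X)$. The corollary should follow as an essentially formal consequence of this fibration structure, so the work is really about drawing the right topological conclusions rather than producing new geometry.

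First I would observe that a (complex) vector bundle $\pi\colon E\to B$ is always homotopy equivalent to its base $B$, since the zero section $B\hookrightarrow E$ is a homotopy equivalence (with homotopy inverse the bundle projection $\pi$, the composite $\pi\circ(\text{zero section})$ being the identity and the composite in the other order deformation-retracting onto the zero section via scalar multiplication $(t,v)\mapsto tv$). Applying this to the total space from Theorem~\ref{thm so12 comp} immediately yields that $\Mm_d(\sSO(1,2))$ is homotopy equivalent to $\Sym^{2g-2-d}(X)$.

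Second, for connectedness, I would note that $\Sym^{2g-2-d}(X)$, the $(2g-2-d)^{\rm th}$ symmetric product of the connected Riemann surface $X$, is connected (it is the image of the connected space $X^{2g-2-d}$ under the continuous quotient map, hence connected). Since the total space of a fiber bundle with connected base and connected fibers (here the fibers are vector spaces, which are connected) is connected, and since $\Mm_d(\sSO(1,2))$ is homotopy equivalent to this connected base, it follows that $\Mm_d(\sSO(1,2))$ is connected.

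I do not anticipate a genuine obstacle here, since the entire content has been front-loaded into Theorem~\ref{thm so12 comp}; the only point requiring minor care is the implicit nonemptiness of the symmetric product, i.e., that $2g-2-d\geq 0$. This is guaranteed by the preceding proposition, which establishes $d=|\deg(L)|\leq 2g-2$ for any polystable object, so $\Sym^{2g-2-d}(X)$ is a well-defined nonempty space (with $\Sym^0(X)$ a single point when $d=2g-2$) and the argument goes through uniformly. Thus the proof is really a two-line invocation of the homotopy equivalence between a vector bundle and its base together with connectedness of the symmetric product.
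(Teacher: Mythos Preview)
Your proposal is correct and is exactly the argument the paper has in mind: the corollary is stated without proof in the paper, since it follows immediately from Theorem~\ref{thm so12 comp} by the standard fact that the total space of a vector bundle deformation retracts onto its (connected) base. Your additional care about the nonemptiness condition $0<d\leq 2g-2$ is appropriate and handled correctly.
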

The cohomology ring of a symmetric product of a Riemann surface was computed in \cite{SymmetricProductsofAlgebraicCurves}, as a result this computes the cohomology ring of $\Mm_d(\sSO(1,2))$. When $d=2g-2$, the space is contractible.

Consider the following map
\begin{gather*}\xymatrix@R=0em{\Mm(\sSO(3,\C))\ar[r]& H^0\big(K^2\big),\\[E,Q_E,\Phi]\ar@{|->}[r]&\frac{1}{4}\tr\big(\Phi^2\big).}\end{gather*}
This is the Hitchin fibration for $\sSO(3,\C)$, we will discuss the Hitchin fibration in more generality in subsequent sections.
\begin{Corollary}The moduli space $\Mm_{2g-2}(\sSO(1,2))$ is parameterized by the $(3g-3)$-dimensional complex vector space $H^0\big(K^2\big)$ of holomorphic differentials. Moreover, $\Mm_{2g-2}(\sSO(1,2))$ is the image of a section of the $\sSO(3,\C)$-Hitchin fibration.
\end{Corollary}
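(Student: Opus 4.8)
The plan is to specialize Theorem~\ref{thm so12 comp} to the extremal value $d=2g-2$ and then to evaluate the $\sSO(3,\C)$-Hitchin map on the resulting family by a direct matrix computation. First I would note that when $d=2g-2$ the base of the fibration in Theorem~\ref{thm so12 comp} is $\Sym^{2g-2-d}(X)=\Sym^0(X)$, a single point. Hence $\Mm_{2g-2}(\sSO(1,2))$ is the total space of a rank $(d+g-1)=(3g-3)$ complex vector bundle over a point, i.e., a complex vector space of dimension $3g-3$.

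Next I would pin this space down concretely using the triples $(L,\gamma,\beta)$ from the proof of Theorem~\ref{thm so12 comp}. Since $L\in\Pic^{2g-2}(X)$, the line bundle $L^{-1}K$ has degree zero, so the requirement $\gamma\in H^0(L^{-1}K)\setminus\{0\}$ forces $L^{-1}K\cong\Oo$, that is $L\cong K$, and $\gamma$ is a nonzero constant. The residual $\C^*$-gauge action $(\beta,\gamma)\mapsto(\lambda^{-1}\beta,\lambda\gamma)$ then lets me normalize $\gamma=1$, after which $\beta$ ranges freely and uniquely over $H^0(LK)=H^0(K^2)$. This identifies $\Mm_{2g-2}(\sSO(1,2))$ with $H^0(K^2)$, whose dimension is indeed $3g-3$ by Riemann--Roch (consistent with the rank count above).

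For the second assertion I would regard these objects as $\sSO(3,\C)$-Higgs bundles, i.e., the complexification, and compute the Hitchin map directly. With $L=K$ and $\gamma=1$, the associated $\sSL(3,\C)$-Higgs bundle \eqref{eq sl3 schematic} has underlying bundle $K\oplus\Oo\oplus K^{-1}$ and Higgs field
\begin{gather*}\Phi=\mtrx{0&\beta&0\\1&0&\beta\\0&1&0}.\end{gather*}
A short computation gives $\tr(\Phi^2)=4\beta$, so $\tfrac14\tr(\Phi^2)=\beta$. Thus, under the identification of $\Mm_{2g-2}(\sSO(1,2))$ with $H^0(K^2)$ established above, the restriction of the $\sSO(3,\C)$-Hitchin map to $\Mm_{2g-2}(\sSO(1,2))$ is (up to the sign fixed by the orthogonal conventions) the identity on $H^0(K^2)$. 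Equivalently, the Hitchin map restricts to a bijection from this component onto $H^0(K^2)$, so the inclusion $\Mm_{2g-2}(\sSO(1,2))\hookrightarrow\Mm(\sSO(3,\C))$ realizes the component as the image of a section of the Hitchin fibration.

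The steps are essentially routine, and the only point I expect to need genuine care is the bookkeeping of the orthogonal structure and the twist by $K$ when writing $\Phi$ as a matrix: the off-diagonal entries must be read off from the quiver \eqref{eq sl3 schematic} with the correct placement, and the contribution of $\eta^\dagger=-Q_V^{-1}\eta^{T}Q_W$ must be tracked so that the trace collapses to the clean multiple of $\beta\gamma$. The remaining ingredients—polystability, smoothness, and the fact that distinct $\beta$ yield non-isomorphic Higgs bundles—are all inherited from the proof of Theorem~\ref{thm so12 comp}, so they require no additional argument.
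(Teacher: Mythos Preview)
Your proposal is correct and follows essentially the same approach as the paper: both force $L=K$ from $\gamma\neq 0$ with $\deg(L)=2g-2$, normalize $\gamma=1$ to identify the component with $H^0(K^2)$, and then verify $\tfrac14\tr(\Phi^2)=\beta$ by a direct $3\times 3$ matrix computation. The only cosmetic differences are that you preface the argument with the specialization of Theorem~\ref{thm so12 comp} to $d=2g-2$ (which the paper skips) and that you order the summands as $K\oplus\Oo\oplus K^{-1}$ rather than the paper's $\Oo\oplus K\oplus K^{-1}$; your parenthetical hedge about signs is unnecessary, as the trace computation lands exactly on $4\beta$ in either ordering.
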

\begin{proof}
An $\sSO(1,2)$-Higgs bundle in $\Mm_{2g-2}(\sSO(1,2))$ is determined by a triple $(L,\beta,\gamma)$ where $\deg(L)=2g-2$, $\beta\in H^0( LK)$ and $\gamma\in H^0\big(L^{-1}K\big)\setminus\{0\}$. The condition on $\gamma$ implies that $L=K$ and thus $\beta\in H^0\big(K^2\big)$. If we normalize $\gamma$ to by $\gamma=1\in H^0(\Oo)$, then there is no more gauge freedom, and so
\begin{gather*}\Mm_{2g-2}(\sSO(1,2))\cong H^0\big(K^2\big).\end{gather*}

Using the above parameterization of $\Mm_{2g-2}(\sSO(1,2))$ by $H^0\big(K^2\big)$, the $\sSO(3,\C)$-Higgs bundle associated to $q_2\in H^0\big(K^2\big)$ is given by
\begin{gather*}[E,Q_E,\Phi]=\left(\Oo \oplus K\oplus K^{-1} , \mtrx{-1&0&0\\0&0&1\\0&1&0},\mtrx{0&1&q_2\\q_2&0&0\\1&0&0}\right).\end{gather*}
For this Higgs bundle $\frac{1}{4}\tr\big(\Phi^2\big)=q_2$.
\end{proof}

Translating these statements to the character variety $\Xx(\Gamma,\sSO(1,2))$ via the nonabelian Hodge correspondence gives the following.
\begin{Corollary}
For each $0<d\leq 2g-2$, the character variety $\Xx(\Gamma,\sSO(1,2))$ has a connected component $\Xx_d(\sSO(1,2))$ which is smooth and diffeomorphic to a real rank $2d+2g-2$ vector bundle over the symmetric product $\Sym^{2g-2-d}(S)$.
\end{Corollary}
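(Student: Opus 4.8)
The plan is to transport the Higgs-bundle description of $\Mm_d(\sSO(1,2))$ across the nonabelian Hodge correspondence (Theorem~\ref{Thm: Nonabelian Hodge Correspondence}) and verify that all the geometric features survive. First I would invoke the nonabelian Hodge correspondence, which gives a homeomorphism $\Mm(\sSO(1,2))\cong\Xx(\Gamma,\sSO(1,2))$ and, crucially, a diffeomorphism on smooth loci. Since Theorem~\ref{thm so12 comp} establishes that $\Mm_d(\sSO(1,2))$ is smooth for $d>0$ (every such Higgs bundle is stable with trivial automorphism group, as shown in that proof), its image under the correspondence is a smooth submanifold of the character variety. I would define $\Xx_d(\sSO(1,2))$ to be this image.

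Next I would transfer the topological statements. Connectedness of $\Mm_d(\sSO(1,2))$ (the first Corollary above) means its image is connected, and because the correspondence matches up the topological invariants labelling the components—the degree $d$ of the line bundle $L$ corresponds to the invariant $\omega\in\Bb_{\sSO(1,2)}(S)$ classifying components—the image is an entire connected component $\Xx_d(\sSO(1,2))$ of $\Xx(\Gamma,\sSO(1,2))$. The diffeomorphism type then follows: Theorem~\ref{thm so12 comp} identifies $\Mm_d(\sSO(1,2))$ with the total space of a rank $(d+g-1)$ \emph{complex} vector bundle over $\Sym^{2g-2-d}(X)$, hence a real rank $2(d+g-1)=2d+2g-2$ vector bundle over the symmetric product. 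Since the diffeomorphism on smooth loci respects the underlying smooth structure, $\Xx_d(\sSO(1,2))$ is diffeomorphic to a real rank $2d+2g-2$ vector bundle over $\Sym^{2g-2-d}(S)$, where I identify $\Sym^{2g-2-d}(X)$ with $\Sym^{2g-2-d}(S)$ as smooth manifolds (the complex structure on $X$ only affects the holomorphic structure, not the underlying topological symmetric product of the surface $S$).

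The main subtlety I would flag is the passage from ``the image is connected'' to ``the image is a full connected component,'' which is where one must use that the Stiefel--Whitney/degree data is a \emph{locally constant} topological invariant that is preserved by the homeomorphism of moduli spaces. Concretely, the decomposition of $\Mm(\sSO(1,2))$ indexed by $(sw_1,sw_2)$ and refined by $d$ matches the decomposition of $\Xx(\Gamma,\sSO(1,2))$ by $\Bb_{\sSO(1,2)}(S)$ under the correspondence, so distinct values of $d$ land in distinct components and each connected piece $\Mm_d(\sSO(1,2))$ exhausts one component. One small point to check is that the correspondence sends the symmetric product $\Sym^{2g-2-d}(X)$ to (a deformation retract inside) $\Xx_d(\sSO(1,2))$ compatibly with the vector bundle structure; but since we only claim a diffeomorphism of the total spaces and not a canonical identification of fibers, this follows immediately from Theorem~\ref{thm so12 comp} once smoothness is known. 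Thus the entire corollary is essentially a translation, with the genuine content residing in the earlier Higgs-bundle theorem and the nonabelian Hodge correspondence.
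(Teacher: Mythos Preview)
Your proposal is correct and follows exactly the approach the paper takes: the paper simply states this corollary as a direct translation of Theorem~\ref{thm so12 comp} via the nonabelian Hodge correspondence (Theorem~\ref{Thm: Nonabelian Hodge Correspondence}), without providing any further argument. Your write-up supplies considerably more detail than the paper does, and the one place where your justification is slightly off---matching the integer $d$ with an element of $\Bb_{\sSO(1,2)}(S)$, which only sees $d\bmod 2$---is harmless, since the cleaner argument is simply that a homeomorphism carries connected components to connected components, and $\Mm_d(\sSO(1,2))$ is already known to be one.
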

\begin{Corollary}Every representation $\rho\in \Xx_d(\Gamma,\sSO(1,2))$ factors through the connected component of the identity $\sSO_0(1,2)$ and the Fuchsian representations are given by
\begin{gather*}\Xx_{2g-2}(\Gamma,\sSO(1,2))\cong\Fuch(\Gamma).\end{gather*}
\end{Corollary}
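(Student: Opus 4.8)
The plan is to prove the two assertions separately, in both cases starting from the explicit normal form for the Higgs bundles in $\Mm_d(\sSO(1,2))$ and then transporting the conclusion across the nonabelian Hodge correspondence (Theorem~\ref{Thm: Nonabelian Hodge Correspondence}). For the factoring claim, recall that every point of $\Mm_d(\sSO(1,2))$ is represented by a tuple $(V,Q_V,W,Q_W,\omega,\eta)$ with $V=\Oo$ and $W=L\oplus L^{-1}$ carrying the quadratic form $\smtrx{0&1\\1&0}$, as in Proposition~\ref{prop SO(p,q)Higgs bundle}. Hence $\det(V)=\Oo$ and $\det(W)=L\otimes L^{-1}=\Oo$ are both holomorphically trivial. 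By the criterion recorded in the Remark following Proposition~\ref{prop SO(p,q)Higgs bundle} — an $\sSO(p,q)$-Higgs bundle reduces to an $\sSO_0(p,q)$-Higgs bundle exactly when both $V$ and $W$ have trivial determinant — every Higgs bundle in $\Mm_d(\sSO(1,2))$ is genuinely an $\sSO_0(1,2)$-Higgs bundle. Feeding this through the nonabelian Hodge correspondence for the subgroup $\sSO_0(1,2)$ shows that the associated representation has image in $\sSO_0(1,2)$, i.e., it factors through the identity component. This computation is uniform in $d$, so it covers every component in the $\Xx_d$ family at once.

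For the second assertion I would identify the holomorphic invariant $d=\deg(L)$ with the Euler number of the corresponding $\sP\sSL(2,\R)\cong\sSO_0(1,2)$ representation. Under this isomorphism an $\sSO_0(1,2)$-Higgs bundle with underlying line bundle $L$ of degree $d$ corresponds to a $\sP\sSL(2,\R)$-representation of Euler number $\pm d$, the sign ambiguity being exactly the residual $\sO(2,\C)$ symmetry interchanging $L$ and $L^{-1}$ — available in $\sSO(1,2)$ but not in its identity component — which on the representation side is conjugation by the orientation-reversing component of $\sSO(1,2)\cong\sP\sGL(2,\R)$. This symmetry collapses the two $\sP\sSL(2,\R)$-components $\Xx^{\pm d}(\Gamma,\sP\sSL(2,\R))$ into the single $\sSO(1,2)$-component $\Xx_d(\Gamma,\sSO(1,2))$, consistent with the fact that $|\deg(L)|=d$ is the well-defined invariant. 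For $d=2g-2$ the previous corollary forces $L=K$, so these are precisely the two extremal $\sP\sSL(2,\R)$-components, and the earlier analysis of $\Xx(\Gamma,\sP\sSL(2,\R))$ identifies $\Xx^{2g-2}\sqcup\Xx^{2-2g}$ with $\Fuch(\Gamma)$. Combining these gives $\Xx_{2g-2}(\Gamma,\sSO(1,2))\cong\Fuch(\Gamma)$.

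The main obstacle is the bookkeeping in this second step: one must verify that the nonabelian Hodge correspondence carries the holomorphic degree invariant $d$ to the representation-theoretic Euler number, and that the passage from $\sSO_0(1,2)$ to $\sSO(1,2)$ — equivalently from $\sP\sSL(2,\R)$ to $\sP\sGL(2,\R)$ — merges the two Teichm\"uller components of $\Fuch(\Gamma)$ into one via the $L\leftrightarrow L^{-1}$ symmetry. The factoring step, by contrast, is an immediate consequence of the determinant computation and requires no further input.
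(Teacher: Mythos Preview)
Your argument for the first assertion --- that $\det V=\Oo$ and $\det W=L\otimes L^{-1}=\Oo$ force a reduction to $\sSO_0(1,2)$, which then transports across nonabelian Hodge --- is correct and in fact spells out what the paper leaves implicit; the paper's written proof does not address this part at all.

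For the second assertion your route is genuinely different. You propose to match invariants directly: identify $\deg(L)$ with the Euler number under nonabelian Hodge, then invoke the classical fact that $\Xx^{2g-2}(\Gamma,\sP\sSL(2,\R))\sqcup\Xx^{2-2g}(\Gamma,\sP\sSL(2,\R))=\Fuch(\Gamma)$, and finally observe that the $L\leftrightarrow L^{-1}$ swap (conjugation by the non-identity component of $\sSO(1,2)$) merges these two into $\Xx_{2g-2}(\Gamma,\sSO(1,2))$. This is valid, and you correctly flag the one nontrivial input: that the topological type of the $\sH^\C$-bundle is preserved under the correspondence, so $\deg(L)$ really is the Euler number. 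The paper sidesteps this bookkeeping entirely with a homotopy-theoretic trick: it knows from Theorem~\ref{thm so12 comp} that $\Xx_d(\Gamma,\sSO(1,2))$ is homotopy equivalent to $\Sym^{2g-2-d}(X)$, hence contractible only for $d=2g-2$; since $\Fuch(\Gamma)$ (after merging the two Teichm\"uller copies inside $\sSO(1,2)$) is a contractible connected component, it must coincide with $\Xx_{2g-2}$. The paper's argument is shorter and uses only the global description of the components already established, at the cost of being less explicit about why the degree and Euler number line up; your approach makes that identification the heart of the matter and would generalize better to situations where no convenient homotopy invariant singles out the target component.
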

\begin{proof}The space $\Fuch(\Gamma)$ consists of two connected component of the character variety \linebreak $\Xx(\Gamma,\sP\sSL(2,\R))=\Xx(\Gamma,\sSO_0(1,2))$ which are identified with the Teichm\"uller space of $S$. Since the representations in these components are conjugate by an element of $\sSO(1,2)$ which is not in $\sSO_0(1,2)$, the two components of $\Fuch(\Gamma)$ are identified in $\Xx(\Gamma, \sSO(1,2))$. Since $\Fuch(\Gamma)$ is contractible and the only~$d$ for which $\Xx_{d}(\Gamma,\sSO(1,2))$ is contractible is $d=2g-2$ we are done.
\end{proof}

\begin{Remark}\label{remark SL2 version}Since the second Stiefel--Whitney class invariant of the Higgs bundles in \linebreak $\Mm_d(\sSO(1,2))$ is given by $d\mod{2}$, the associated $\sSO(3,\C)$-Higgs bundles lift to $\sSpin(3,\C)$ if and only if~$d$ is even. Recall that the isomorphism $\sSpin(3,\C)=\sSL(2,\C)$, is given by the~$2$ to~$1$ map $\sSL(2,\C)\to \sSO(3,\C)$ which is induced by the action on the second symmetric pro\-duct~$S^2\big(\C^2\big)$. Here, the volume form on~$\C^2$ induces a nondegenerate symmetric form on the second symmetric product.

The $\sSL(2,\R)$-Higgs bundles which give rise to the Higgs bundles in $\Mm_{2d}(\sSO(1,2))$ are thus given by
\begin{gather*}(E,\omega,Q_E,\Phi)\cong\left(N\oplus N^{-1},\mtrx{0&-1\\1&0},\mtrx{0&1\\1&0},\mtrx{0&\beta\\\gamma&0}\right),\end{gather*}
where we view the volume form $\omega\in H^0\big(\Lambda^2E\big)$ is viewed as a skew symmetric homomorphism $\smtrx{0&-1\\1&0}\colon E^*\to E$.
Indeed, taking the second symmetric product gives
\begin{gather*}\big(S^2E,S^2\omega,S^2\Phi\big)\cong\left( \Oo\oplus N^2\oplus N^{-2}, \mtrx{-1&0&0\\0&0&1\\0&1&0}, \mtrx{0&\gamma&\beta\\\beta&0&0\\\gamma&0&0}\right).\end{gather*}
In particular, the $\sSL(2,\R)$-Higgs bundles which define points in the component $\Mm_{2g-2}(\sSO(1,2))$ are given by
\begin{gather*}%\label{eq SL2R Fuchsian Higgs bundle}
\left(K^\frac{1}{2}\oplus K^{-\frac{1}{2}},\mtrx{0&-1\\1&0},\mtrx{0&1\\1&0},\mtrx{0&q_2\\1&0}\right)
\end{gather*}
for one of the $2^{2g}$ choices of square root $K^\frac{1}{2}$ of $K$. In particular, there are $2^{2g}$-connected components of $\Mm(\sSL(2,\R))$ which project to $\Mm_{2g-2}(\sP\sSL(2,\R))\cong\Mm_{2g-2}(\sSO(1,2))$.
\end{Remark}

\section{The Hitchin fibration and Hitchin section}\label{section Hitchin component}
So far we have seen that the character variety $\Xx(\Gamma,\sG)$ is homeomorphic to the moduli space of $\sG$-Higgs bundles. The upshot of this correspondence is that the Higgs bundle moduli space has a lot of useful structures which the character variety is lacking. In this section we define the Hitchin component and use this additional structure to construct the Hitchin component from Definition~\ref{Def Hit comp}.

\subsection{The Hitchin fibration}
Suppose $\sG$ is a complex simple Lie group. Similar to Chern--Weil theory, we can apply an invariant polynomial to the Higgs field and obtain a holomorphic differential. Fixing a homogeneous basis $p_1,\dots,p_{\rk(\sG)}$ of the $\Ad_\sG$-invariant polynomials $\C[\fg]$ with $\deg(p_j)=m_j+1$ defines a map
\begin{gather}\label{eq hitchin fibration}\begin{split}&
h\colon \ \xymatrix@R=0em{\Mm(\sG)\ar[r]&\displaystyle\bigoplus\limits_{j=1}^{\rk(\sG)}H^0\big(K^{m_j+1}\big),\\\big[\bar\p_P,\varphi\big]\ar@{|->}[r]&(p_1(\varphi),\dots,p_{\rk(\sG)}(\varphi))}\end{split}
\end{gather}
called the Hitchin fibration. For example, when $\sG=\sSL(n,\C)$ we have $m_j=j$ for $1\leq j\leq n-1$, and when $\sG=\sSO(2n+1,\C)$ we have $m_j=2j-1$ for $1\leq j\leq n$.

In general, a computation using the Riemann--Roch theorem shows that the base is half the dimension of the moduli space:
\begin{gather*}\dim_\C\left(\bigoplus\limits_{j=1}^{\rk(\sG)}H^0\big(K^{m_j+1}\big)\right)=\frac{1}{2}\dim_\C\big(\Mm(\sG)\big)=\dim_\C(\sG)(g-1).\end{gather*}
Moreover, the Hitchin fibration is a proper map \cite{NitsurePairs}. In fact, the generic fibers of the Hitchin fibration are half-dimensional tori and this makes $\Mm(\sG)$ into a algebraic completely integrable system \cite{IntSystemFibration}, we will not make use of this additional structure.

\begin{Remark}\label{remark dim of hitchin base}Notice that the dimension of the base of the Hitchin fibration is the same as the dimension of the moduli space of $\sG^r$-Higgs bundles for $\sG^r<\sG$ any real form. For example, the Hitchin base of $\sSO(2n+1,\C)$ has the same dimension as $\Mm(\sSO(p,q))$ for all $p$ and $q$ satisfying $p+q=2n+1$.
\end{Remark}

\subsection{The Hitchin section}
Let $\fg$ be a semisimple complex Lie algebra. For $\fs\subset\fg$ a subalgebra isomorphic to $\fsl(2,\C)$, consider the decomposition of $\fg$ into irreducible $\fsl(2,\C)$-representations
\begin{gather*}\fg=\bigoplus\limits_{j=1}^N V_j.\end{gather*}
For any such $\fs\subset\fg$ we have $N\geq\rk(\fg)$, and when $N=\rk(\fg)$ the three-dimensional subalgebra $\fs$ is called \emph{principal}. Up to conjugation, there is a unique principal three-dimensional subalgebra~\cite{ptds}. In this case we have $\dim(V_j)=2m_j+1$ where $1=m_1\leq m_2\leq \cdots\leq m_{\rk(\fg)}$ are the exponents of~$\fg$. Moreover, when we restrict a principal embedding~$\fsl(2,\C)\to \fg$ to the real subalgebra~$\fsl(2,\R)$, the image lies in a split real subalgebra of~$\fg$. This defines an embedding
\begin{gather*}\iota_{\rm pr}\colon \ \sP\sSL(2,\R)\to \sG^{\rm split}.\end{gather*}

\begin{Theorem}[Hitchin \cite{liegroupsteichmuller}] Let $\sG$ be a complex simple Lie group, then the Hitchin fibration~\eqref{eq hitchin fibration} has a section
\begin{gather*}s_h\colon \ \bigoplus\limits_{j=1}^{\rk(\sG)}H^0\big(K^{m_j+1}\big)\longrightarrow \Mm(\sG),\end{gather*}
which maps onto a component of the moduli space for the split real form $\Mm\big(\sG^{\rm split}\big)$. Under the nonabelian Hodge correspondence $($Theorem~{\rm \ref{Thm: Nonabelian Hodge Correspondence})}, the image of this section defines the Hitchin component $\Hit\big(\sG^{\rm split}\big)\subset\Xx\big(\sG^{\rm split}\big)$ from Definition~{\rm \ref{Def Hit comp}}.
\end{Theorem}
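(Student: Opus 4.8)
The plan is to construct the Hitchin section explicitly using the principal three-dimensional subalgebra $\fs \cong \fsl(2,\C) \subset \fg$ and its associated decomposition of $\fg$ into irreducible $\fsl(2,\C)$-representations. The key insight is that the principal embedding gives us a canonical way to assemble a Higgs bundle out of holomorphic differentials, generalizing the explicit $\sSO(3,\C)$-computation seen earlier in the $\Mm_{2g-2}(\sSO(1,2))$ corollary.

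\emph{First I would} fix a standard $\fsl(2,\C)$-triple $\{e,h,f\}$ generating the principal subalgebra $\fs$, with $[h,e]=2e$, $[h,f]=-2f$, $[e,f]=h$. The grading of $\fg$ by the $\ad_h$-eigenvalues decomposes $\fg = \bigoplus_m \fg_m$, where the highest-weight vectors $e_1=e, e_2, \dots, e_{\rk(\fg)}$ of the irreducible summands $V_j$ sit in degrees $2m_j$. The plan is to build the Higgs bundle over $X$ by using $K$ itself as the geometric incarnation of the grading: one takes the holomorphic bundle $\Pp$ associated to the principal $\sH^\C$-reduction determined by a choice of square root $K^{1/2}$, so that the adjoint bundle decomposes as $\bigoplus_m \fg_m \otimes K^{m}$. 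For a tuple $(q_{m_1+1}, \dots, q_{m_{\rk}+1})$ of holomorphic differentials with $q_{m_j+1} \in H^0(K^{m_j+1})$, I would define the Higgs field as
\begin{gather*}
\varphi = f + \sum_{j=1}^{\rk(\sG)} q_{m_j+1}\, e_j,
\end{gather*}
where $f$ (living in degree $-2$) pairs with the canonical section of $K$ to land in $\fg_{-2}\otimes K \subset \Pp[\fg^\C]\otimes K$, and each $e_j$ (degree $2m_j$) pairs with $q_{m_j+1}$ to land in $\fg_{2m_j}\otimes K^{m_j+1}$. This is manifestly a holomorphic section of $\Pp[\fm^\C]\otimes K$ once one checks the reality constraint placing it in the split real form.

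\emph{Next I would} verify three things. First, that $s_h$ is a genuine section of the Hitchin fibration $h$: since $\varphi$ is a companion-type element whose invariant polynomials $p_j(\varphi)$ recover exactly the differentials $q_{m_j+1}$ (this is the standard fact that the invariants separate the Kostant slice $f + \ker(\ad\, e)$), we get $h \circ s_h = \Id$. Second, that every Higgs bundle in the image is stable: the presence of the lowest-weight term $f$ forces any $\Phi$-invariant reduction to respect the full principal grading, and a degree computation using $\deg(K^m) > 0$ rules out destabilizing subbundles — this mimics the $\gamma \neq 0$ argument in the $\sSO(1,2)$ proposition. Third, that the image lies in $\Mm(\sG^{\rm split})$ and is open and closed: stability gives openness (via the smooth point criterion of Proposition~\ref{Prop stability for GC ss}), while the dimension count $\dim_\C \bigoplus_j H^0(K^{m_j+1}) = \dim_\C(\sG)(g-1)$ matched against Remark~\ref{remark dim of hitchin base} shows the image is a half-dimensional cell whose dimension equals that of the full moduli space $\Mm(\sG^{\rm split})$, forcing it to be a connected component. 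That this component contains $\iota_{\rm pr}(\Fuch(\Gamma))$ follows by taking $q_2$ to be a holomorphic quadratic differential uniformizing a hyperbolic structure and all higher $q_{m_j+1}=0$, which reproduces precisely the principal $\sP\sSL(2,\R)$-Fuchsian data.

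\emph{The hard part will be} the Lie-theoretic bookkeeping that makes the construction intrinsic rather than representation-dependent: establishing that the Kostant section $f + \ker(\ad\, e)$ really does map isomorphically onto the full space of invariants $\C[\fg]^{\sG} \cong \bigoplus_j H^0(K^{m_j+1})$, and that the reality condition cutting out the split form $\sG^{\rm split}$ is compatible with holomorphicity over $X$. The proof that the image is exactly one connected component (not merely contained in one) also requires knowing that the moduli space of the split form is smooth of the expected dimension along the section, so that the open-and-closed dichotomy applies — this is where the detailed structure theory of principal $\fsl_2$-triples, as developed in \cite{ptds}, does the essential work.
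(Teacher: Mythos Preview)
Your approach is essentially the paper's: build the section from the Kostant slice $f+\ker(\ad e)$ of the principal $\fsl(2,\C)$, verify it is a section of $h$ via Kostant's theorem on the invariants, check stability, and argue the image is open and closed in $\Mm(\sG^{\rm split})$. The paper carries this out concretely for $\sG=\sSO(2p+1,\C)$ via explicit matrices on $\Kk_p\oplus\Kk_{p+1}$, whereas you work in the general Lie-theoretic framework; both are legitimate. One difference worth noting is the stability step: rather than your direct argument with $\Phi$-invariant subbundles, the paper observes that at the origin the Higgs bundle is the $2p$-th symmetric power of the stable rank-two pair $\big(K^{1/2}\oplus K^{-1/2},\smtrx{0&0\\1&0}\big)$ and hence stable; stability is an open condition, and scaling the Higgs field by $\lambda$ is gauge-equivalent to rescaling each $q_{2j}$ by $\lambda^{2j}$, so stability propagates from a neighborhood of the origin to the whole image. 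This avoids any case analysis of invariant reductions.

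There is, however, a genuine gap in your closedness argument. You write that the dimension match between $\bigoplus_j H^0(K^{m_j+1})$ and $\Mm(\sG^{\rm split})$ ``forc[es the image] to be a connected component.'' That inference is false as stated: an injective continuous map between manifolds of the same dimension has \emph{open} image by invariance of domain (this is exactly how the paper argues openness, using that $s_h$ is a section of $h$ and hence injective), but openness plus equal dimension says nothing about closedness --- an open ball in $\R^n$ is full-dimensional without being closed. The missing ingredient, which you never invoke, is \emph{properness of the Hitchin fibration}. The paper's argument is: if a sequence $(q^{(n)})$ diverges in the base, then $h\big(s_h(q^{(n)})\big)=q^{(n)}$ diverges, and properness of $h$ forces $s_h(q^{(n)})$ to diverge in $\Mm(\sG^{\rm split})$; thus $s_h$ is a proper map and its image is closed. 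Your ``hard part'' paragraph correctly flags that the open-and-closed dichotomy is what is needed, but smoothness of the moduli space along the section does not supply the closed half.
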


We will prove the above theorem for $\sG=\sSO(2p+1,\C)$, namely we will construct the Hitchin section and prove that it maps onto a component for the group $\sSO(p,p+1)$. For $\sSO(2p+1,\C)$ the Hitchin fibration is given by
\begin{gather*}\Mm(\sSO(2p+1,\C))\to \bigoplus\limits_{j=1}^{2p}H^0\big(K^{2j}\big).\end{gather*}
Consider the rank $p$ holomorphic orthogonal bundle
\begin{gather}
\label{eq Kk_p notation}\Kk_p= K^{p-1}\oplus K^{p-3}\oplus\cdots \oplus K^{3-p}\oplus K^{1-p}.
\end{gather}
Note that $\Kk_p$ has a natural orthogonal structure $Q_p=\smtrx{&&1\\&\iddots&\\1}\colon \Kk_p\to\Kk_p^*$.

Consider the map
\begin{gather*}\widehat\Psi\colon \ \xymatrix{\bigoplus\limits_{j=1}^{2p}H^0\big(K^{2j}\big)\ar[r]&\Hh(\sSO(p,p+1))}\end{gather*}
defined by
\begin{gather}
\label{eq image Psi hat}\widehat\Psi(q_2,\dots,q_{2p})=\left(\Kk_p,Q_p,\Kk_{p+1},Q_{p+1},\mtrx{q_2&q_4&\dots&q_{2p}\\1&q_2&\dots&q_{2p-2}\\&\ddots&\ddots&\vdots\\&&1&q_2\\&&&1}\colon \Kk_{p}\to \Kk_{p+1}\otimes K\right).
\end{gather}
We claim that the image of $\widehat\Psi$ is contained in the stable Higgs bundles $\Hh^{s}(\sSO(p,p+1))$ and that the induced map $\Psi\colon \bigoplus\limits_{j=1}^{p}H^0\big(K^{2j}\big)\to\Mm(\sSO(p,p+1))$ is injective and has an open and closed image.

\begin{Proposition}The image of $\Psi$ consists of stable $\sSO(p,p+1)$-Higgs bundles.
\end{Proposition}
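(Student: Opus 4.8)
The plan is to prove stability by analyzing the $\Phi$-invariant holomorphic subbundles of the associated $\sSL(2p+1,\C)$-Higgs bundle and showing each has negative degree, using the functoriality of stability (a $\sSO(p,p+1)$-Higgs bundle is stable as soon as the associated $\sSL(2p+1,\C)$-Higgs bundle is stable, or failing that, one checks polystability with finite automorphisms via Proposition~\ref{Prop stability for GC ss}). First I would write out the full rank $2p+1$ bundle $E=\Kk_p\oplus\Kk_{p+1}$ explicitly as a direct sum of powers of $K$: the weights appearing are $K^{p-1},K^{p-3},\dots,K^{1-p}$ from $\Kk_p$ and $K^{p},K^{p-2},\dots,K^{-p}$ from $\Kk_{p+1}$. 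The key observation is that, after reordering, $E$ is filtered by the powers of $K$ and the Higgs field $\Phi$ (built from the block $\eta$ in \eqref{eq image Psi hat} together with $\eta^\dagger$) acts like a companion-matrix/principal-nilpotent shift: its ``$1$'' entries move each line bundle summand $K^{m}$ to the summand $K^{m-1}\otimes K = K^{m}$ one step down, so that $\Phi$ is a cyclic (regular nilpotent plus lower-order) endomorphism.

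The main steps are then as follows. I would first establish that the constant part of the Higgs field (the $q_i=0$ specialization) is a principal nilpotent element realizing the principal $\fsl(2,\C)$, so that $\Phi$ has a single Jordan block structure as a $K$-twisted endomorphism. Second, I would argue that any $\Phi$-invariant subbundle $F\subset E$ must be compatible with this cyclic structure: because $\Phi$ maps the top weight space onto successively lower ones via isomorphisms (the off-diagonal $1$'s), a $\Phi$-invariant $F$ generated by its highest-weight component is forced to contain an entire string of summands. Concretely, I would show that the saturation of $F$ under iterated application of $\Phi$ produces a subbundle whose degree I can bound. The cleanest route is to note that the maximal-degree summand $K^{p}$ generates, under $\Phi$, a subspace meeting every weight; and any invariant subbundle avoiding $K^{p}$ sits inside a quotient with strictly negative total degree. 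Summing the exponents of $K$ over any $\Phi$-invariant subbundle and using $\deg(E)=0$ (since the weights are symmetric about $0$) gives $\deg(F)<0$.

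The hard part will be handling the interaction of $\Phi$ with the orthogonal form $Q_E=\mathrm{diag}(Q_p,-Q_{p+1})$, since invariant isotropic subbundles require separate attention and one must verify that $\eta^\dagger=-Q_p^{-1}\eta^TQ_{p+1}$ does not create unexpected low-degree invariant subbundles coming from the $\Kk_{p+1}\to\Kk_p$ direction. My plan to overcome this is to argue at the level of the $\sSL(2p+1,\C)$-Higgs bundle, where I only need to check arbitrary (not necessarily isotropic) $\Phi$-invariant subbundles; here the regular-nilpotent/cyclic structure of $\Phi$ makes the degree estimate uniform. Precisely, since $\Phi$ is a regular element in each fiber, the only $\Phi$-invariant subbundles are the ``tails'' of the cyclic filtration $0\subset F_1\subset\cdots\subset E$, and each proper tail $F_k$ consists of the lowest $k$ weight spaces, whose degrees sum to a strictly negative number because the highest (positive) weights are excluded. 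This yields $\deg(F_k)<0$ for every proper invariant $F_k$, establishing stability as an $\sSL(2p+1,\C)$-Higgs bundle and hence, by functoriality, as an $\sSO(p,p+1)$-Higgs bundle.
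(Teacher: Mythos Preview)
Your direct subbundle analysis has a genuine gap at the step where you assert that ``since $\Phi$ is a regular element in each fiber, the only $\Phi$-invariant subbundles are the tails of the cyclic filtration.'' This is only true when all $q_{2j}=0$, where $\Phi$ is regular \emph{nilpotent} and each fiber has a unique invariant flag. For generic $(q_2,\dots,q_{2p})$ the Higgs field is regular semisimple in the fiber, so there are $2^{2p+1}$ invariant subspaces pointwise, and the ``tails'' $K^{-p}\oplus\cdots\oplus K^{-p+k-1}$ are no longer $\Phi$-invariant at all (for instance, for $p=2$ one has $\Phi(K^{-2})=q_4K^{1}\oplus q_2K^{-1}$). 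So neither direction of your claimed classification holds once the $q$'s are switched on, and the degree estimate as written does not apply. The argument \emph{can} be repaired: set $E_j=\bigoplus_{i\ge j}K^i$ and note that for all $q$ one still has $\Phi(E_j)\subset E_{j-1}\otimes K$; intersecting an invariant $F$ with this filtration, the induced maps $\mathrm{gr}_jF\to\mathrm{gr}_{j-1}F\otimes K$ are restrictions of the subdiagonal isomorphisms, forcing the nonzero graded pieces of $F$ to occupy the \emph{bottom} $r$ weights, whence $\deg F\le (2g-2)\sum_{j=-p}^{-p+r-1}j<0$. This is the correct version of what you sketched, but it requires working with the associated graded rather than asserting $F$ equals a tail.

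The paper's proof takes a completely different and much shorter route that avoids analyzing invariant subbundles altogether. It first observes that $\widehat\Psi(0,\dots,0)$ is the $2p$-th symmetric power of the stable rank-two Higgs bundle $\big(K^{1/2}\oplus K^{-1/2},\smtrx{0&0\\1&0}\big)$, hence stable. Openness of stability then gives stability for all $(q_2,\dots,q_{2p})$ in a neighborhood of $0$. Finally, the $\C^*$-action is used: scaling the Higgs field of $\widehat\Psi(q_2,\dots,q_{2p})$ by $\lambda$ is gauge equivalent to $\widehat\Psi(\lambda^2q_2,\dots,\lambda^{2p}q_{2p})$, and since stability is preserved under scaling the Higgs field, the open neighborhood propagates to the whole Hitchin base. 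This three-step argument (stable at origin, open, $\C^*$-homogeneous) is the cleaner strategy here.
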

\begin{proof}
Consider the stable $\sSL(2,\C)$-Higgs bundle
\begin{gather}
\label{eq rnk 2}(E,\Phi)= \left(K^\frac{1}{2}\oplus K^{-\frac{1}{2}},\mtrx{0&0\\1&0}\right).
\end{gather}
Since the unique irreducible $(2p+1)$-dimensional representation of $\sSL(2,\C)$ is given by the $2p$-symmetric product, the $\sSL(2p+1,\C)$-Higgs bundle given by
\begin{gather*}\big(S^{2p+1}E,S^{2p+1}\Phi\big)= \left(\!K^{p}\oplus K^{p-1}\oplus\cdots\oplus K^{1-p}\oplus K^{-p},\mtrx{0&\\p-1&0\\&p-2&0\\&&\ddots&\ddots\\&&&p-1&0}\!\right)\end{gather*}
is also stable. Moreover this is gauge equivalent to
\begin{gather}
\label{eq hit fixed point}\left(K^{p}\oplus K^{p-1}\oplus\cdots\oplus K^{1-p}\oplus K^{-p},\mtrx{0&\\1&0\\&\ddots&\ddots\\&&1&0}\right).
\end{gather}

After rearranging the summands of $\Kk_p\oplus\Kk_{p+1}$, the $\sSL(2p+1,\C)$-Higgs bundle associated to $\widehat\Psi(0,\dots,0)$ is given by~\eqref{eq hit fixed point}. Thus, $\widehat\Psi(0,\dots,0)$ is a stable $\sSO(p,p+1)$-Higgs bundle. Since sta\-bility is an open condition, for $q_2,\dots,q_{2p}$ sufficiently close to zero, the Higgs bundle $\widehat\Psi(q_2,\dots,q_{2p})$ is also stable.

Scaling the Higgs field does not preserve the image of $\widehat\Psi$. However, for each $\lambda\in\C^*$, the Higgs bundle obtained by scaling the Higgs field of $\widehat\Psi(q_2,q_4,\dots,q_{2p})$ by $\lambda$ is gauge equivalent to $\widehat\Psi\big(\lambda^2q_2,\lambda^{4}q_4,\dots,\lambda^{2p}q_{2p}\big)$. Since stability is preserved by scaling the Higgs field, all Higgs bundles in the image of $\widehat\Psi$ are stable.
\end{proof}

\begin{Proposition}\label{prop hom basis}Let $\Phi(q_2,\dots,q_{2p})$ be the Higgs field of the $\sSO(2p+1,\C)$-Higgs bundle associated to the $\sSO(p,p+1)$-Higgs bundle $\widehat\Psi(q_2,\dots,q_{2p})$. There is a basis $(p_1,\dots,p_p)$ of the invariant polynomials $\C[\fso(2p+1,\C)]^{\sSO(2p+1,\C)}$ so that for all $j$
\begin{gather*}p_j(\Phi(q_2,\dots,q_{2p}))=q_{2j}.\end{gather*}
\end{Proposition}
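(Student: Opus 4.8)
The plan is to exploit two structures: the weighted $\C^*$-action already used in the previous proof, which forces the relation between $p_j(\Phi(q_2,\dots,q_{2p}))$ and the $q_{2j}$ to be triangular, together with Kostant's theory of the principal slice, which guarantees that this triangular relation is invertible. Throughout write $q=(q_2,\dots,q_{2p})$ and $\Phi(q)$ for the associated $\sSO(2p+1,\C)$-Higgs field. First I would fix a convenient homogeneous generating set of $\C[\fso(2p+1,\C)]^{\sSO(2p+1,\C)}$. Viewing $\Phi$ as an element of $\fsl(2p+1,\C)$, its eigenvalues occur in pairs $\pm\lambda_i$ together with a single zero, so $\det(t\,\mathrm{Id}-\Phi)=t\prod_{i=1}^p(t^2-\lambda_i^2)$; the $p$ nonzero coefficients $P_1,\dots,P_p$ of this characteristic polynomial (equivalently $\tr(\Phi^{2j})$ for $1\le j\le p$) are homogeneous of degrees $2,4,\dots,2p$ and freely generate the invariant ring.

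Next comes the weight argument. Recall from the previous proof that rescaling the Higgs field of $\widehat\Psi(q_2,\dots,q_{2p})$ by $\lambda\in\C^*$ produces a Higgs bundle gauge equivalent to $\widehat\Psi(\lambda^2 q_2,\dots,\lambda^{2p}q_{2p})$. Since each $P_j$ is homogeneous of degree $2j$, the function $q\mapsto P_j(\Phi(q))$ is weighted-homogeneous of weight $2j$, where $q_{2k}$ is assigned weight $2k$. It is therefore a linear combination of monomials $\prod_k q_{2k}^{a_k}$ with $\sum_k k\,a_k=j$, and the only such monomial involving any $q_{2k}$ with $k\ge j$ is $q_{2j}$ itself. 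Hence $P_j(\Phi(q))=c_j\,q_{2j}+R_j(q_2,\dots,q_{2j-2})$ for a constant $c_j\in\C$ and a weighted-homogeneous remainder $R_j$ in the strictly lower variables.

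The crux is that this lower-triangular transformation is invertible, i.e.\ that each $c_j\neq0$. This is exactly Kostant's slice theorem \cite{ptds}: the nilpotent Higgs field $\Phi_0$ appearing in \eqref{eq hit fixed point} is the principal nilpotent of the principal $\fsl(2,\C)\hookrightarrow\fso(2p+1,\C)$ realized by the $2p$-th symmetric power, and the off-diagonal entries carrying $q_2,\dots,q_{2p}$ in \eqref{eq image Psi hat} span a complementary slice of highest-weight vectors. Kostant's theorem asserts that the restriction of the adjoint quotient map to this slice, namely $P\mapsto P(\Phi(q))$, is an isomorphism of graded algebras $\C[\fso(2p+1,\C)]^{\sSO(2p+1,\C)}\cong\C[q_2,\dots,q_{2p}]$. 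Inverting it, the coordinate function $q_{2j}$ is the image of a unique invariant polynomial $p_j$, which by construction satisfies $p_j(\Phi(q))=q_{2j}$, and the grading forces $p_j$ to be homogeneous of degree $2j$; thus $(p_1,\dots,p_p)$ is again a homogeneous generating set. Concretely, invertibility lets one solve triangularly: set $p_1=c_1^{-1}P_1$ and recursively subtract the remainders $R_j$ after rewriting them through the already-constructed $p_k$ with $k<j$.

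I expect the main obstacle to be precisely the nonvanishing of the diagonal coefficients $c_j$ — equivalently, checking that $\widehat\Psi$ genuinely realizes the Kostant section rather than a degenerate family. This reduces to verifying that $\Phi_0$ is principal nilpotent and that the $q_{2j}$-directions are the correct highest-weight vectors, which follows from the $2p$-th symmetric power description of the principal $\fsl(2,\C)$. Should one wish to avoid invoking Kostant, the same nonvanishing can be extracted by a direct computation: setting all $q_{2k}=0$ except $q_{2j}$, the coefficient of $q_{2j}$ in the relevant coefficient of $\det(t\,\mathrm{Id}-\Phi)$ comes from a single cyclic product of the subdiagonal $1$'s of \eqref{eq image Psi hat} with one $q_{2j}$-entry, and its nonzero combinatorial value yields $c_j\neq0$.
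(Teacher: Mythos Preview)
Your proposal is correct and follows essentially the same approach as the paper: invoke Kostant's theorem for the general statement, and note that the triangular structure of $q\mapsto(P_1(\Phi(q)),\dots,P_p(\Phi(q)))$ allows one to solve recursively for the desired basis. The paper merely makes this concrete for $p=2$ by computing $\tr(\Phi^2)$ and $\tr(\Phi^4)$ explicitly and leaves the general case to the reader, whereas you supply the weighted-homogeneity argument that explains the triangularity for all $p$; this is a welcome elaboration rather than a different route.
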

\begin{proof}In the general setting of a complex semisimple Lie group the existence of such a basis was proven by Kostant in~\cite{ptds}. For $\sSO(2p+1,\C)$ we construct such a basis by direct computation. We explain how this works for $p=2$ and leave the general case to the reader.

After rearranging the summands, the $\sSO(5,\C)$-Higgs bundle $(E,Q,\Phi)$ associated to the $\sSO(2,3)$-Higgs bundle $\widehat\Psi(q_2,q_4)$ is given by
\begin{gather*}\left(K^2\oplus K\oplus \Oo\oplus K^{-1}\oplus K^{-2},\mtrx{&&&&-1\\&&&1\\&&-1\\&1\\-1}, \mtrx{0&q_2&0&q_4&0\\1&0&q_2&0&q_4\\0&1&0&q_2&0\\0&0&1&0&q_2\\0&0&0&1&0}\right).\end{gather*}
We have $\tr\big(\Phi^2\big)=8q_2$ and $\tr\big(\Phi^4\big)=14q_2^2+5q_4$, thus we choose the basis
\begin{gather*}\xymatrix{p_1(\Phi)=\frac{1}{8}\tr\big(\Phi^2\big)&\text{and}&p_2(\Phi)=\frac{1}{5}\tr\big(\Phi^4\big)-\frac{1}{14}\tr\big(\Phi^2\big)^2}.\end{gather*}
\end{proof}

By the previous two propositions, the map $\widehat\Psi$ gives rise to a well defined map
\begin{gather*}\Psi\colon \ \bigoplus\limits_{j=1}^{p} H^0\big(K^{2j}\big)\to \Mm(\sSO(p,p+1)), \end{gather*}
which is a section of the Hitchin fibration for $\Mm(\sSO(2p+1,\C))\to\bigoplus\limits_{j=1}^p H^0\big(K^{2j}\big)$. We now show that the image of $\Psi$ is open and closed.

\begin{Proposition}The image of the map $\Psi\colon \bigoplus\limits_{j=1}^{p} H^0\big(K^{2j}\big)\to \Mm(\sSO(p,p+1))$ is open and closed.
\end{Proposition}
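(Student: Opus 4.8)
The plan is to prove openness and closedness separately. For openness I would use the topological invariance of domain, and for closedness the elementary fact that a continuous section of a continuous map into a Hausdorff space has closed image. The first step toward openness is to check that every point in the image of $\Psi$ is a \emph{smooth} point of $\Mm(\sSO(p,p+1))$. Any automorphism of the $\sSO(p,p+1)$-Higgs bundle $\widehat\Psi(q_2,\dots,q_{2p})$ is in particular an automorphism of its associated $\sSL(2p+1,\C)$-Higgs bundle, which was shown above to be stable; such an automorphism must therefore be a scalar $\lambda\,\mathrm{Id}$. Preserving the orthogonal form $Q_E$ forces $\lambda^2=1$, and membership in $\sSO(2p+1,\C)$ forces $\lambda^{2p+1}=1$, whence $\lambda=1$. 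Thus the automorphism group is trivial, so by the deformation theory underlying Remark~\ref{rem dim of tangent space} the moduli space is, near each image point, an honest manifold of real dimension $\dim_\R(\sSO(p,p+1))(2g-2)=p(2p+1)(2g-2)$.

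Next I would match this with the dimension of the domain. Since $\deg K^{2j}=4j(g-1)$ while $K^{1-2j}$ has negative degree for $j\geq1$, Serre duality and Riemann--Roch give $\dim_\C H^0(K^{2j})=(4j-1)(g-1)$, so $\bigoplus_{j=1}^p H^0(K^{2j})$ has real dimension $2\sum_{j=1}^p(4j-1)(g-1)=2p(2p+1)(g-1)$, exactly the figure above. The map $\Psi$ is continuous, and it is injective because it is a section of the Hitchin fibration (Proposition~\ref{prop hom basis}). A continuous injection from an open subset of $\R^N$ into an $N$-manifold is open by invariance of domain; applying this in a manifold chart around each image point shows that $\Psi$ has open image, which is then open in $\Mm(\sSO(p,p+1))$ because the stable locus is itself open.

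For closedness I would exploit the section identity directly. Writing $h$ for the Hitchin fibration \eqref{eq hitchin fibration} of $\sSO(2p+1,\C)$ restricted to $\Mm(\sSO(p,p+1))$ and expressed in the invariant-polynomial basis of Proposition~\ref{prop hom basis}, that proposition gives $h\circ\Psi=\mathrm{Id}$. Both $h$ and $\Psi$ are continuous, so $\Psi\circ h\colon\Mm(\sSO(p,p+1))\to\Mm(\sSO(p,p+1))$ is continuous, and the image of $\Psi$ is exactly the set $\{x:\Psi(h(x))=x\}$, the equalizer of $\Psi\circ h$ with the identity. Since $\Mm(\sSO(p,p+1))$ is Hausdorff this equalizer is closed, so the image of $\Psi$ is closed.

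The step I expect to be the main obstacle is openness: one must be sure the hypotheses of invariance of domain genuinely hold, namely that the image lands in a locus where $\Mm(\sSO(p,p+1))$ is a manifold of the predicted dimension — secured here by the triviality of the automorphism groups computed above — and that the Riemann--Roch count agrees \emph{precisely} with $\dim_\R(\sSO(p,p+1))(2g-2)$. Closedness is comparatively formal, requiring only continuity of $\Psi$ and $h$ together with the identity $h\circ\Psi=\mathrm{Id}$.
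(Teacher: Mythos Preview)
Your openness argument coincides with the paper's: both use invariance of domain after observing that $\Psi$ is injective (since it is a section of $h$) and that the dimensions match. You supply an additional detail the paper leaves implicit, namely that the automorphism group of each $\widehat\Psi(q_2,\dots,q_{2p})$ is actually trivial (not merely finite), so the image sits in the honest manifold locus rather than just the orbifold locus; this makes the appeal to invariance of domain cleaner.

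Your closedness argument is correct and genuinely different from the paper's. The paper argues via properness of the Hitchin fibration: a divergent sequence in the base has divergent image because $h$ is proper and $h\circ\Psi=\mathrm{Id}$. You instead observe that the image of a continuous section is the equalizer $\{x:\Psi(h(x))=x\}$, which is closed in any Hausdorff target. This is more elementary---it avoids invoking the nontrivial properness result of Nitsure---and works for any continuous section of any continuous map into a Hausdorff space. The paper's route, on the other hand, is the one that generalizes to the later situation of the map $\Theta_{p,q}$ in Section~\ref{section so(p,q)}, where one no longer has a globally defined section of $h$ to exploit and properness becomes the natural tool.
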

\begin{proof}For openness, we use invariance of domains. Namely, the spaces have the same dimension, and, by Proposition~\ref{prop hom basis}, no two Higgs bundles in the image of $\widehat\Psi$ are gauge equivalent. Thus, $\widehat\Psi$ is an injective map between manifolds of the same dimension, and is therefore open.

For closedness suppose $\big(q_2^j,\dots,q_{2p}^j\big)$ is a divergent sequence of points in $\bigoplus\limits_{j=1}^{p}H^0\big(K^{2j}\big)$. By Pro\-po\-si\-tion~\ref{prop hom basis} and properness of the Hitchin fibration we conclude that the sequence $\Psi\big(q_2^j,\dots,q_{2p}^j\big)$ also diverges in $\Mm(\sSO(p,p+1))$.
\end{proof}

To complete the proof we need to show that under the nonabelian Hodge correspondence, the~com\-po\-nent defined by $\Psi\Big(\bigoplus\limits_{j=1}^{p}H^0\big(K^{2j}\big)\Big)$ is the Hitchin component $\Hit(\sSO(p,p+1))$ from De\-fi\-ni\-tion~\ref{Def Hit comp}. It suffices to show that the representation associated to $\Psi(0,\dots,0)$ is in $\Hit(\sSO(p,p+1))$. By Remark~\ref{remark SL2 version}, the Higgs bundle~\eqref{eq rnk 2} defines an $\sSL(2,\R)$-Higgs bundle whose corresponding representation is in $\Fuch(\Gamma)$. From Example~\ref{ex SO(p,p+1) principal}, the principal embedding $\iota_{\rm pr}\colon \sP\sSL(2,\R)\to\sSO(p,p+1)$ is given by taking the $2p$-symmetric product of the standard representation of $\sSL(2,\R)$. Thus, the representation associated to $\Psi(0,\dots,0)$ is contained in $\Hit(\sSO(p,p+1))$.

\section{Structure of the moduli space}
\subsection{Tangent space and deformation complex}
In this section we will assume for simplicity that $\sG$ is a real form of a complex semisimple Lie group. Under this assumption, the automorphism group of a stable $\sG$-Higgs bundle is discrete (see Proposition \ref{Prop stability for GC ss}). Recall that $\Hh(\sG)$ is the set of pairs $\big(\bar\p_P,\varphi\big)$ where $\bar\p_P$ is a Dolbeault operator on a~smooth $\sH^\C$-bundle $P\to X$ and $\varphi\in \Omega^{1,0}\big(P\big[\fm^\C\big]\big)$ such that $\bar\p_P\varphi=0$.

Since the space of Dolbeault operators is an affine space with underlying vector space isomorphic $\Omega^{0,1}\big(P\big[\fh^\C\big]\big)$, the tangent space of $\Hh^{\rm ps}(\sG)$ at $\big(\bar\p_P,\varphi\big)$ is given by the set of $(\alpha,\psi)\in\Omega^{0,1}\big(P\big[\fh^\C\big]\big)\oplus\Omega^{1,0}\big(P\big[\fm^\C\big]\big)$ so that $\varphi+\psi$ is holomorphic with respect to the Dolbeault operator $(\bar\p_P+\alpha)$ to first order. That is,
\begin{gather*}T_{(\bar\p_P,\varphi)}\Hh(\sG)=\big\{(\alpha,\psi)\in\Omega^{0,1}\big(P\big[\fh^\C\big]\big)\oplus\Omega^{1,0}\big(P\big[\fm^\C\big]\big)\,|\,\bar\p_P\psi+[\alpha,\varphi] =0\in\Omega^{1,1}\big(P\big[\fh^\C\big]\big)\big\}.\end{gather*}

The moduli space of $\sG$-Higgs bundles is a set of gauge equivalence classes:
\begin{gather*}\Mm(\sG)=\Hh^{\rm ps}(\sG)/\Gg_{\sH^\C},\end{gather*}
where $\Hh^{\rm ps}(\sG)$ denotes the set of polystable pairs. At stable points of the moduli space, the~tangent space can be interpreted as a quotient of the tangent space to the gauge orbit $\Gg_{\sH^\C}\cdot\big(\bar\p_P,\varphi\big)$:
\begin{gather*}T_{[\bar\p_P,\varphi]}\Mm(\sG)= T_{(\bar\p_P,\varphi)}\Hh(\sG)/T_{(\bar\p_P,\varphi)}\Gg_{\sH^\C}\cdot\big(\bar\p_P,\varphi\big).\end{gather*}
This is because, under our assumption on $\sG$, the automorphism group of a stable $\sG$-Higgs bundle is discrete, and so the gauge group action is locally free. The tangent space to the gauge orbit of a stable $\sG$-Higgs bundle can thus be identified with the tangent space at the identity of the gauge group
\begin{gather*}T_e\Gg_{\sH^\C}\cong \Omega^0\big(P\big[\fh^\C\big]\big).\end{gather*}

The identification of $\Omega^0\big(P\big[\fh^\C\big]\big)$ with the tangent space $T_{(\bar\p_P,\varphi)}\Gg_{\sH^\C}\cdot\big(\bar\p_P,\varphi\big)$ is given by the map
\begin{gather*}\xymatrix@R=0em{\Omega^0\big(P\big[\fh^\C\big]\big)\ar[r] &\Omega^{0,1}\big(P\big[\fh^\C\big]\big)\oplus\Omega^{1,0}\big(P\big[\fm^\C\big]\big),\\x\ar@{|->}[r]&\big(\bar\p_Px,[\varphi,x]\big).}\end{gather*}
Note that $(\bar\p_Px,[\varphi,x])\in T_{(\bar\p_P,\varphi)}\Hh(\sG)$ since $\bar\p_P([\varphi,x])+[\bar\p_Px,\varphi]=0$.

The tangent space to $\Mm(\sG)$ at a stable Higgs bundle $\big[\bar\p_P,\varphi\big]$ is thus identified with
\begin{gather}\label{eq tangent space}
T_{[\bar\p_P,\varphi]}\Mm(\sG)\cong\dfrac{\big\{(\alpha,\psi)\in\Omega^{0,1}\big(P\big[\fh^\C\big]\big) \oplus\Omega^{1,0}\big(P\big[\fm^\C\big]\big)\,|\,\bar\p_P\psi+[\alpha,\varphi]=0)\big\}}{\big\{(\bar\p_Px,[\varphi,x])\in\Omega^{0,1} \big(P\big[\fh^\C\big]\big)\oplus\Omega^{1,0}\big(P\big[\fm^\C\big]\big)\,|\,x\in\Omega^0\big(P\big[\fh^\C\big]\big)\big\}}.
\end{gather}

 The tangent space fits into a very useful exact sequence.
\begin{Proposition}For a stable $\sG$-Higgs bundle $\big(\bar\p_P,\varphi\big)$, we have an exact sequence
\begin{gather}
\label{eq exact sequence}\begin{split} & \xymatrix@R=1em@C=2em{0\ar[r]&H^0\big(P\big[\fh^\C\big]\big)\ar[r]^{\ad_\varphi \ \ \ }&H^0\big(P\big[\fm^\C\big]\otimes K\big)\ar[r]^{\ \ i}&T_{[\bar\p_P,\varphi]}\Mm(\sG)\ar@{->}`r/3pt [d] `/10pt[l] `^d[lll] `^r/3pt[d][dll]_{\pi\ \ \ \ \ \ \ \ \ }\\&H^1\big(P\big[\fh^\C\big]\big)\ar[r]_{\ad_\varphi \ \ \ }&H^1\big(P\big[\fm^\C\big]\otimes K\big),&}
\end{split} \end{gather}
where the map $i$ is induced by the inclusion
\begin{gather*}\xymatrix@R=0em{H^0\big(P\big[\fm^\C\big]\otimes K\big)\ar[r]& T_{(\bar\p_P,\varphi)} \Hh(\sG),\\\psi\ar@{|->}[r]&(0,\psi),}\end{gather*} and the map $\pi$ is induced by the projection
\begin{gather*}\xymatrix@R=0em{ T_{(\bar\p_P,\varphi)} \Hh(\sG)\ar[r]&\Omega^{0,1}\big(P\big[\fh^\C\big]\big),\\(\psi,\alpha)\ar@{|->}[r]&\alpha.}\end{gather*}
\end{Proposition}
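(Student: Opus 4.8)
The plan is to identify the five-term sequence with the long exact sequence in hypercohomology of the two-term \emph{deformation complex}
\begin{gather*}
C^\bullet(\varphi)\colon\quad P\big[\fh^\C\big]\xrightarrow{\ \ad_\varphi\ }P\big[\fm^\C\big]\otimes K,
\end{gather*}
placed in degrees $0$ and $1$, where $\ad_\varphi(x)=[\varphi,x]$; this lands in $P\big[\fm^\C\big]\otimes K$ because the symmetric decomposition $\fg^\C=\fh^\C\oplus\fm^\C$ satisfies $\big[\fm^\C,\fh^\C\big]\subset\fm^\C$. The content of the tangent space formula \eqref{eq tangent space} is precisely that $T_{[\bar\p_P,\varphi]}\Mm(\sG)$ is the first hypercohomology $\HH^1(C^\bullet(\varphi))$. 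With this identification the proposition becomes the statement that the hypercohomology of a length-one complex fits into its standard long exact sequence, truncated because $X$ is a curve and so $H^{\geq 2}$ of any coherent sheaf vanishes.

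Rather than invoke hypercohomology abstractly, I would prove the statement by a hands-on diagram chase on the Dolbeault total complex. Resolving each term of $C^\bullet(\varphi)$ gives
\begin{gather*}
\Omega^0\big(P\big[\fh^\C\big]\big)\xrightarrow{\ d_0\ }\Omega^{0,1}\big(P\big[\fh^\C\big]\big)\oplus\Omega^{1,0}\big(P\big[\fm^\C\big]\big)\xrightarrow{\ d_1\ }\Omega^{1,1}\big(P\big[\fm^\C\big]\big),
\end{gather*}
with $d_0(x)=\big(\bar\p_Px,[\varphi,x]\big)$ and $d_1(\alpha,\psi)=\bar\p_P\psi+[\alpha,\varphi]$. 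Here $\ker d_1/\im d_0$ is exactly the quotient on the right-hand side of \eqref{eq tangent space}, the map $i$ is $\psi\mapsto\big[(0,\psi)\big]$, and $\pi$ is $\big[(\alpha,\psi)\big]\mapsto[\alpha]$. First I would check these are well defined: $(0,\psi)\in\ker d_1$ since $\bar\p_P\psi=0$ for holomorphic $\psi$, and $\pi$ kills $\im d_0$ by construction. I would also verify that the two cohomology-level maps labelled $\ad_\varphi$ in \eqref{eq exact sequence} descend to $H^1$; this uses $\bar\p_P\varphi=0$ together with the graded Leibniz rule, which force $[\varphi,\bar\p_Px]$ to be $\bar\p_P$-exact, so that $\ad_\varphi$ carries $\bar\p_P$-boundaries to $\bar\p_P$-boundaries.

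With the maps in hand, exactness at each interior node is a routine chase, which I would carry out from left to right. Exactness at $H^0\big(P\big[\fm^\C\big]\otimes K\big)$ and at $T_{[\bar\p_P,\varphi]}\Mm(\sG)$ reduces to solving $d_0(x)=(0,\psi)$, respectively to writing a class with $[\alpha]=0$ as $\big[(0,\psi')\big]$ after subtracting $d_0(x)$; exactness at $H^1\big(P\big[\fh^\C\big]\big)$ amounts to the observation that $[\varphi,\alpha]$ is $\bar\p_P$-exact if and only if there is a $\psi$ with $(\alpha,\psi)\in\ker d_1$, which is immediate from the defining relation $\bar\p_P\psi+[\alpha,\varphi]=0$. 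The one step that is not purely formal is injectivity of the leftmost map $\ad_\varphi\colon H^0\big(P\big[\fh^\C\big]\big)\to H^0\big(P\big[\fm^\C\big]\otimes K\big)$: its kernel is $\{x\in H^0\big(P\big[\fh^\C\big]\big)\,|\,[\varphi,x]=0\}$, which is the Lie algebra of $\Aut(\Pp,\varphi)$, i.e.\ $\HH^0(C^\bullet(\varphi))$. This vanishes precisely because the Higgs bundle is stable and hence has discrete automorphism group (Proposition~\ref{Prop stability for GC ss}). I expect this identification of the kernel with infinitesimal automorphisms, and the input that stability forces it to be zero, to be the crux; everything else is the formal machinery of the hypercohomology long exact sequence on a curve.
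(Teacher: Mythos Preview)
Your proposal is correct and follows essentially the same approach as the paper: a direct diagram chase on the Dolbeault total complex, verifying exactness at each node and invoking stability (via the discreteness of the automorphism group from Proposition~\ref{Prop stability for GC ss}) for injectivity on the left. The hypercohomology framing you add up front is exactly what the paper records in the remark immediately following its proof.
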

\begin{Remark}\label{rem dim of tangent space}In fact, the sequence \eqref{eq exact sequence} is exact on the right, however we have not developed the techniques to prove this. Using exactness of this sequence, a Riemann--Roch calculation implies that the real dimension of the tangent space at a stable Higgs bundle is given by $\dim_\R(\sG)(2g-2)$.
\end{Remark}
\begin{proof}
First, the elements in the kernel of the map $\ad_\varphi$ correspond to tangent vectors of one parameter families of automorphisms of $\big(\bar\p_P,\varphi\big)$. Thus stability implies that $\ker(\ad_\varphi)=0$.

Next, note that the kernel of the map $i\colon H^0\big(P\big[\fm^\C\big]\otimes K\big)\to T_{[\bar\p_P,\varphi]}\Mm(\sG)$ is given by the set of $(0,\psi)=\big(\bar\p_Px,[\varphi,x]\big)$. Thus, the kernel of $i$ equals the image of the map $\ad_\varphi\colon H^0\big(P\big[\fh^\C\big]\big)\to H^0\big(P\big[\fm^\C\big]\otimes K\big)$.

The projection $T_{(\bar\p_P,\varphi)} \Hh(\sG)\to\Omega^{0,1}\big(P\big[\fh^\C\big]\big)$ descends to a map
\begin{gather*}\pi\colon \ T_{[\bar\p_P,\varphi]}\Mm(\sG)\to H^1\big(P\big[\fh^\C\big]\big)\end{gather*} since $\alpha+\bar\p_Px$ defines the same cohomology class as~$\alpha$. Any representative of an element of the kernel of $\pi$ is a pair $\big(\bar\p_Px,\psi\big)$ such that $\bar\p_P\psi+\big[\bar\p_Px,\varphi\big]=0$. Any such pair is equivalent to $(0,\psi-\ad_\varphi x)$. Thus, the kernel of $\pi$ is the image of $i$.

Finally, the condition $\bar\p_P\psi+[\varphi,\alpha]=0$ implies that $\ad_\varphi(\alpha)$ is zero in the cohomology group $H^1\big(P\big[\fm^\C\big]\otimes K\big)$. Thus, the image of $\pi$ is the kernel of $\ad_\varphi$.
\end{proof}

\begin{Remark}For any Higgs bundle we have an analogous sequence which fails to be exact on the left and may or may not also fail to be exact on the right. One way to describe this is with a deformation complex (see~\cite{BiswasRamananInfitesimal}). Namely, the sheaf map $\ad_\varphi\colon P\big[\fh^\C\big]\to P\big[\fm^\C\big]\otimes K$ defines a~long exact sequence in hypercohomology
\begin{gather*}\xymatrix@R=1em@C=2em{0\ar[r]&\HH^0\big(\bar\p_P,\varphi\big)\ar[r]&H^0\big(P\big[\fh^\C\big]\big)\ar[r]^{\ad_\varphi \ \ \ }&H^0\big(P\big[\fm^\C\big]\otimes K\big)\ar[r]&\HH^1\big(\bar\p_P,\varphi\big)\ar@{->}`r/3pt [d] `/10pt[l] `^d[llll] `^r/3pt[d][dlll]\\&H^1\big(P\big[\fh^\C\big]\big)\ar[r]_{\ad_\varphi \ \ \ }&H^1\big(P\big[\fm^\C\big]\otimes K\big)\ar[r]&\HH^2\big(\bar\p_P,\varphi\big)\ar[r]&0.}\end{gather*}
In general, $\HH^0\big(\bar\p_P,\varphi\big)$ is the space of infinitesimal automorphisms of $\big(\bar\p_P,\varphi\big)$, and for stable Higgs bundles, the tangent space $T_{[\bar\p_P,\varphi]}\Mm(\sG)$ is identified with $\HH^1\big(\bar\p_P,\varphi\big)$.
\end{Remark}
\subsection[The $\C^*$-action]{The $\boldsymbol{\C^*}$-action}
There is a natural action of $\C^*$ on the $\sG$-Higgs bundle moduli space given by scaling the Higgs field
\begin{gather*}\xymatrix@R=0em{\C^*\times\Mm(\sG)\ar[r]&\Mm(\sG),\\ \big(\lambda,\big[\bar\p_P,\varphi\big]\big)\ar@{|->}[r]&\big[\bar\p_P,\lambda\varphi\big].}\end{gather*}
Note that the Hitchin fibration \eqref{eq hitchin fibration} is equivariant with respect to a weighted $\C^*$-action:
\begin{gather*}h\big(\big[\bar\p_P,\lambda\cdot\varphi\big]\big)=\big(\lambda^{m_1+1}p_1(\varphi),\dots,\lambda^{m_{\rk(\sG)}+1}p_{\rk(\sG)}(\varphi)\big).\end{gather*}
Thus, the fixed points of the $\C^*$-action are contained in the \emph{nilpotent cone} $h^{-1}(0)$.
Moreover, the properness of $h$ implies that $\lim\limits_{\lambda\to0}\big[\bar\p_P,\lambda\varphi\big]$ always exists and is a $\C^*$-fixed point.

Since we are dealing with isomorphism classes, being a $\C^*$-fixed point does not imply $\varphi=0$. Rather, it implies that there is a holomorphic gauge transformation $g_\lambda$ such that $\Ad_{g_{\lambda}^{-1}}\varphi=\lambda\varphi$ for all $\lambda\in\C^*$. For $\sSL(n,\C)$, the $\C^*$-fixed points are classified by the following proposition.
\begin{Proposition}Let $(E,\Phi)$ be a polystable $\sSL(n,\C)$-Higgs bundle. Then $(E,\Phi)$ is gauge equivalent to $(E,\lambda\Phi)$ for all $\lambda\in\C^*$ if and only if there is a holomorphic splitting $E=E_1\oplus\cdots\oplus E_\ell$ in which the Higgs field is given by
\begin{gather*}\Phi=\mtrx{0&\\\varphi_1&0\\&\ddots&\ddots\\&&\varphi_{\ell-1}&0},\end{gather*}
where $\varphi_j\colon E_j\to E_{j+1}\otimes K$ is a holomorphic bundle map.
\end{Proposition}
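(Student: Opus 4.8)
The plan is to prove both implications directly from the definition of the gauge action, with essentially all of the content concentrated in the forward direction.

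For the reverse implication, suppose $E=E_1\oplus\cdots\oplus E_\ell$ with $\varphi_j\colon E_j\to E_{j+1}\otimes K$ as in the statement. I would exhibit an explicit gauge transformation realizing the equivalence. For $\lambda\in\C^*$ let $g_\lambda$ act on $E_j$ as the scalar $\lambda^{a_j}$ with $a_j=a_1-(j-1)$; then $g_\lambda^{-1}\Phi g_\lambda$ acts on the block $\varphi_j$ as $\lambda^{a_j-a_{j+1}}\varphi_j=\lambda\varphi_j$, so $g_\lambda^{-1}\Phi g_\lambda=\lambda\Phi$. Choosing $a_1=\frac{1}{n}\sum_j\rk(E_j)(j-1)$ arranges $\det g_\lambda=\lambda^{\sum_j\rk(E_j)a_j}=1$, so $g_\lambda$ lies in the $\sSL(n,\C)$ gauge group (for each fixed $\lambda$ one simply fixes a value of $\lambda^{a_1}$, after which the remaining scalars are single-valued), and $(E,\Phi)\cdot g_\lambda=(E,\lambda\Phi)$.

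For the forward implication, I would first unwind the fixed-point hypothesis: since the gauge action satisfies $\varphi\cdot g=g^{-1}\varphi g$ for a holomorphic automorphism $g$, the equivalence $(E,\Phi)\sim(E,\lambda\Phi)$ means that for every $\lambda$ there is a holomorphic bundle automorphism $g_\lambda\in\Aut(E)$ with $g_\lambda^{-1}\Phi g_\lambda=\lambda\Phi$. The goal is to replace this merely pointwise family by a genuine homomorphism $\rho\colon\C^*\to\Aut(E)$. To this end I would work inside the finite-dimensional associative algebra $\End(E)$ of global holomorphic endomorphisms of $E$, whose group of units is the algebraic group $\Aut(E)$, and consider the subgroup $H=\{g\in\Aut(E)\mid g^{-1}\Phi g\in\C^*\Phi\}$. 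Sending $g$ to the resulting scalar defines a character $H\to\C^*$ which is surjective by the fixed-point hypothesis and whose kernel is $\Aut(E,\Phi)$. A maximal torus of $H$ surjects onto the quotient torus $\C^*$, and a cocharacter splitting this surjection produces the desired one-parameter subgroup $\rho(\lambda)$ satisfying $\rho(\lambda)^{-1}\Phi\rho(\lambda)=\lambda\Phi$.

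Given $\rho$, the remainder is routine: decomposing $E$ into the (holomorphic) integer weight spaces of $\rho$ yields $E=\bigoplus_k E_{(k)}$, and the intertwining relation forces each nonzero component of $\Phi$ to lower the weight by exactly one, so $\Phi$ maps $E_{(k)}$ into $E_{(k-1)}\otimes K$ and vanishes on all other components. Ordering the nonzero weight spaces by decreasing weight and relabelling them $E_1,\dots,E_\ell$ puts $\Phi$ in the stated strictly lower-triangular bidiagonal form. I expect the main obstacle to be the splitting step: turning the family $\{g_\lambda\}$, a priori defined only up to the scalars in $\Aut(E,\Phi)$ for each $\lambda$ separately, into a single algebraic one-parameter subgroup. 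In the stable case $\Aut(E,\Phi)=\C^*\Id$ and this is precisely the statement that the central extension $1\to\C^*\to H\to\C^*\to1$ of tori splits; in the polystable case $\Aut(E,\Phi)$ is reductive and one invokes instead that a maximal torus of $H$ maps onto the quotient torus. It is this coherent, integral choice of the $g_\lambda$ — equivalently, the fact that a $\C^*$-fixed Higgs bundle carries an honest grading and not merely an infinitesimal one — that is the heart of the statement.
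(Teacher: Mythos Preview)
The paper is a survey and states this proposition without proof, so there is nothing in the paper to compare against directly. Your argument is correct. For the forward direction you correctly upgrade the pointwise family $\{g_\lambda\}$ to an algebraic cocharacter $\rho\colon\C^*\to\Aut(E)$ by splitting the surjection $H\to\C^*$ through a maximal torus (using that $\Aut(E,\Phi)$, and hence its extension $H$ by $\C^*$, is reductive by polystability); the weight decomposition of $\rho$ on $E$ then gives the holomorphic grading. One small omission: the case $\Phi=0$ should be set aside at the outset, since the character $H\to\C^*$ is only well defined when $\Phi\neq0$; there the conclusion is trivial with $\ell=1$.

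It is worth knowing the more elementary route, usually attributed to Simpson, which avoids algebraic-group structure theory entirely. Fix a single $\lambda_0$ that is not a root of unity and the corresponding automorphism $g=g_{\lambda_0}\in\Aut(E)$. The coefficients of the characteristic polynomial of $g$ are global holomorphic functions on the compact curve $X$, hence constant, so the generalized eigenspace decomposition $E=\bigoplus_\mu E_\mu$ is a holomorphic splitting. The intertwining relation $g\Phi=\lambda_0\Phi g$ forces $\Phi(E_\mu)\subset E_{\lambda_0\mu}\otimes K$, and finiteness of the eigenvalue set together with $\lambda_0$ not being a root of unity lets one order the blocks along $\lambda_0$-orbits into a chain. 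Your approach buys a canonical integer grading coming from an honest $\C^*$-action (which is what one ultimately wants for the Morse-theoretic index computations later in the paper), at the price of invoking reductivity and torus splitting; the eigenspace argument needs only one automorphism and linear algebra.
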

\begin{Remark}
We will usually represent such a fixed point schematically as
\begin{gather*}%\label{Eq VHS notation}
\xymatrix{E_1\ar[r]_{\varphi_1}&E_2\ar[r]_{\varphi_2}&\cdots\ar[r]_{\varphi_{\ell-2}}&E_{\ell-1}\ar[r]_{\varphi_{\ell-1}}&E_\ell,}
\end{gather*}
where we suppress the twisting by $K$ from the notation. The moduli space of such fixed points is a special case of the moduli of holomorphic chains.
\end{Remark}

For $\sSL(n,\C)$ we have $\sH^\C=\sSL(n,\C)$ and $\fm^\C=\fsl(n,\C)$. For $\sSL(n,\C)$-Higgs bundles fixed by the $\C^*$-action, the $\sH^\C$-bundle has a holomorphic reduction $E=E_1\oplus\cdots\oplus E_k$ to a subgroup of block diagonal matrices. Such a reduction gives a $\Z$-grading on the bundle $\Ee\big[\fm^\C\big]=\End(E)=\bigoplus_j\End(E)_h$, where
\begin{gather*}\End(E)_j=\bigoplus\limits_{b-a=j}\Hom(E_a,E_b).\end{gather*}
Moreover, with respect to this $\Z$-grading we have $\Phi\in H^0(\End(E)_{1}\otimes K)$. The characterization of $\sG$-Higgs bundles fixed by the $\C^*$-action is given by the following proposition.
\begin{Proposition}
\label{Prop G Higgs fixed point}A polystable $\sG$-Higgs bundle $(\Pp,\varphi)$ defines a fixed point of the $\C^*$-action on~$\Mm(\sG)$ if and only if
\begin{enumerate}\itemsep=0pt
\item[$1.$] There is a $\Z$-grading $\fg^\C=\bigoplus_j\fg^\C_j=\bigoplus_j\fh_j^\C\oplus\fm^\C_j$ so that $[\fg_j,\fg_i]\subset\fg_{i+j}$.
\item[$2.$] There is a holomorphic reduction $\Pp_{\sH^\C_0}\subset\Pp$ to an $\sH^\C_0$-bundle, where $\sH^\C_0<\sH^\C$ is the Lie group with Lie algebra $\fh_0^\C$.
\item[$3.$] With respect to the decomposition $\Pp_{\sH_0^\C}\big[\fm^\C\big]=\bigoplus_j\Pp_{\sH_0^\C}\big[\fm^\C_j\big]$, we have
\begin{gather*}\varphi\in H^0\big(\Pp_{\sH^\C_0}\big[\fm_{1}^\C\big]\otimes K\big).\end{gather*}
\end{enumerate}
\end{Proposition}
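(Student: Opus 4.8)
The plan is to prove the two implications separately, realizing the $\C^*$-action at the level of gauge transformations and then differentiating. For the reverse implication, suppose conditions 1--3 hold and let $\zeta\in\fh_0^\C$ be the \emph{grading element} of the $\Z$-grading in condition~1, i.e., the semisimple element with $[\zeta,x]=jx$ for $x\in\fg_j^\C$; it has integer $\ad$-eigenvalues and is central in $\fh_0^\C$. Using the reduction $\Pp_{\sH^\C_0}$ from condition~2, the central one-parameter subgroup $\lambda\mapsto\lambda^{-\zeta}$ of $\sH^\C_0$ is single-valued by integrality and therefore defines a global holomorphic gauge transformation $g_\lambda\in\Gg_{\sH^\C}$. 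Since condition~3 says $\varphi$ has $\ad_\zeta$-weight $1$, one computes $\Ad_{g_\lambda^{-1}}\varphi=\lambda\varphi$, so $(\Pp,\varphi)$ and $(\Pp,\lambda\varphi)$ are gauge equivalent and $(\Pp,\varphi)$ is a $\C^*$-fixed point. This direction is essentially formal.

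For the forward implication the real work begins. Assuming $(\Pp,\varphi)$ is a fixed point, for each $\lambda$ there is $g_\lambda\in\Aut(\Pp)$, a holomorphic bundle automorphism preserving the holomorphic structure, with $\Ad_{g_\lambda^{-1}}\varphi=\lambda\varphi$. The first step is to rigidify this family into an honest homomorphism. Since $g_\lambda g_\mu$ and $g_{\lambda\mu}$ both send $\varphi\mapsto\lambda\mu\varphi$, their ratio lies in $\Aut(\Pp,\varphi)$, which is finite for stable Higgs bundles by Proposition~\ref{Prop stability for GC ss}; so $\lambda\mapsto g_\lambda$ is a homomorphism modulo a finite group, and I would correct it to a genuine algebraic homomorphism $\C^*\to\Aut(\Pp)$ by lifting along $\exp\colon\C\to\C^*$ and absorbing the finite ambiguity.

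With a homomorphism in hand, set $\zeta=\frac{d}{dt}\big|_{t=0}g_{e^t}\in H^0(\Pp[\fh^\C])$. Single-valuedness of $\lambda\mapsto\lambda^\zeta$ forces $\zeta$ to be semisimple with integer eigenvalues, and differentiating $\Ad_{g_\lambda^{-1}}\varphi=\lambda\varphi$ shows $\varphi$ is an $\ad_\zeta$-eigenvector of weight $1$ after orienting the grading. The eigenvalues of $\ad_\zeta$ are constant over $X$ by holomorphicity, so, viewing $\zeta$ as an $\sH^\C$-equivariant map $\Pp\to\fh^\C$ and fixing a value $\zeta_0$ in its image, the centralizer $\sH^\C_0=\Stab_{\sH^\C}(\zeta_0)$ has Lie algebra $\fh_0^\C=\ker(\ad_{\zeta_0})\cap\fh^\C$, and $\zeta^{-1}(\zeta_0)\subset\Pp$ is a holomorphic $\sH^\C_0$-reduction $\Pp_{\sH^\C_0}$, giving condition~2. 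In this reduction $\zeta\equiv\zeta_0$, so the eigenspace decomposition of $\ad_{\zeta_0}$ supplies the $\Z$-grading $\fg^\C=\bigoplus_j\fg^\C_j$, which satisfies $[\fg^\C_i,\fg^\C_j]\subset\fg^\C_{i+j}$ automatically because $\ad_{\zeta_0}$ is a derivation, giving condition~1; and the weight-$1$ statement for $\varphi$ becomes $\varphi\in H^0(\Pp_{\sH^\C_0}[\fm^\C_1]\otimes K)$, giving condition~3.

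The main obstacle is the rigidification step: a priori the $g_\lambda$ are defined only up to the Higgs bundle's automorphism group, and upgrading them to an algebraic one-parameter subgroup with integral generator requires control of these automorphisms. At stable points this is clean, but at strictly polystable points one must decompose into stable summands and track the residual central tori, which is the most delicate part. An alternative that sidesteps the rigidification is to fix a faithful representation $\sG^\C\hookrightarrow\sSL(n,\C)$, apply the preceding proposition to the associated polystable, $\C^*$-fixed $\sSL(n,\C)$-Higgs bundle to obtain a holomorphic chain splitting $E=\bigoplus_k E_k$, and then verify that this splitting is compatible with the $\sH^\C$-structure and induces the grading element $\zeta$ above; I would expect the compatibility check to be the crux of that route.
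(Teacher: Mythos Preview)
The paper does not actually prove this proposition: it is stated and immediately followed by a remark, with no intervening proof environment. This is consistent with the paper's stated aim as a survey (``we do not prove any of the foundational theorems''). So there is no paper proof to compare against; I can only assess your argument on its own terms.

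Your outline is the standard one (essentially Simpson's), and both directions are correct in spirit. Two points deserve tightening. First, you should check that the $\Z$-grading produced by $\ad_\zeta$ is compatible with the Cartan splitting, i.e.\ that each $\fg^\C_j$ decomposes as $\fh^\C_j\oplus\fm^\C_j$; this is immediate once you observe that $\zeta$ lies in $\fh^\C$ (because $g_\lambda$ is an $\sH^\C$-gauge transformation) and hence $\ad_\zeta$ preserves $\fh^\C$ and $\fm^\C$ separately, but the statement of the proposition requires it and you do not say it. Second, the rigidification step as written is not quite right: the $g_\lambda$ are produced pointwise in $\lambda$ with no continuity, so ``lifting along $\exp$'' is premature. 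The clean fix is to consider the algebraic subgroup $\widetilde\sG=\{(g,\lambda)\in\Aut(\Pp)\times\C^*:\Ad_{g^{-1}}\varphi=\lambda\varphi\}$; the fixed-point hypothesis says the second projection is surjective, its kernel is $\Aut(\Pp,\varphi)$, and in the stable case the identity component $\widetilde\sG^0$ is a one-dimensional connected algebraic group mapping onto $\C^*$ with finite kernel, hence is itself a copy of $\C^*$ acting on $\Pp$. This supplies the one-parameter subgroup directly and yields the grading element with integral spectrum. Your acknowledgement that the strictly polystable case is more delicate is accurate; there one typically passes to the Jordan--H\"older constituents, or equivalently uses that $\Aut(\Pp,\varphi)$ is then reductive and splits off a torus factor.
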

\begin{Remark}
In terms of vector bundles, the $\sG$-Higgs bundles which are $\C^*$-fixed points are given by holomorphic chains with extra symmetries which reflect the symmetries of a $\sG$-Higgs bundle. For example, the $\sSL(p+q,\C)$-Higgs bundle associated to an $\sSO(p,q)$-Higgs bundle $(V,W,\eta)$ is given by $\Big(V\oplus W,\smtrx{0&\eta^\dagger\\\eta&0}\Big)$, so the associated fixed points are direct sums of holomorphic chains of the form
\begin{gather*}\xymatrix{V_{r}\ar[r]^{\eta_r}&W_{r-1}\ar[r]^{\eta_{2-r}^\dagger}&\cdots\ar[r]^{\eta_{r-2}}&W_{1-r}\ar[r]^{\eta_r^\dagger}&V_{-r}}\end{gather*}
and
\begin{gather*}\xymatrix{W_{s}\ar[r]^{\eta_{1-s}^\dagger}&V_{s-1}\ar[r]^{\eta_{s-1}}&\cdots\ar[r]^{\eta_{s-1}^\dagger}&V_{1-s}\ar[r]^{\eta_{1-s}}&W_{-s}.}\end{gather*}
Here $r$ and $s$ are half integers and the additional symmetry on the grading comes from the orthogonal structure. Namely, the quadratic forms give isomorphisms $W_{-j}\cong W_j^*$ and $V_{-j}\cong V_j^*$.
\end{Remark}

\subsection{Critical points of a Morse--Bott function}
So far we have not used the full power of the nonabelian Hodge correspondence. Since we have a special metric associated to each polystable Higgs bundle, we can take the~$\mathrm{L}^2$-norm of the Higgs field. Namely, consider the nonnegative function $f\colon \Mm(\sG)\to\R$ defined by
\begin{gather}
\label{eq Morse-Bott functions} f\big(\big[\bar\p_P,\varphi\big]\big)=\int_X|\varphi|^2,
\end{gather}
where the norm $|\varphi|$ is taken with respect to the Hermitian metric associated to $\big(\bar\p_P,\varphi\big)$ from the nonabelian Hodge correspondence (see Remark \ref{herm metric remark}).
\begin{Remark}
Note that the function $f\big(\bar\p_P,\varphi\big)=0$ if and only if $\varphi=0$. Equivalently, the global minima of $f$ are given by points in the moduli of polystable $\sH^\C$-bundles $\Mm(\sH)\subset\Mm(\sG)$.
\end{Remark}
For $\lambda\in\sU(1)$, the metrics from the nonabelian Hodge correspondence associated $\big(\bar\p_P,\varphi\big)$ and $\big(\bar\p_P,\lambda\varphi\big)$ are the same.\footnote{This is not necessarily true when $\lambda\in\C^*$.} Thus, the function $f$ is $\sU(1)$-invariant. Moreover, in \cite[Section 8]{liegroupsteichmuller}, Hitchin showed that the $\sU(1)$-action is Hamiltonian with respect to the symplectic structure $\omega_I$ from Remark \ref{Rem: symplectic form}, and that the function $f$ is a moment map for this action. That is,
\begin{gather*}\mathrm{grad}(f)=IX,\end{gather*}
where $X$ is the vector field generating the $\sU(1)$-action. This implies that $f$ is a Morse--Bott function on the smooth locus of $\Mm(\sSL(n,\C))$ and critical submanifolds of $f$ are exactly the components of the fixed point set of the $\sU(1)$-action. In fact, Hitchin's arguments also hold for the moduli space~$\Mm(\sG)$.

Since the moduli space $\Mm(\sG)$ is usually not smooth, we cannot use the full power of Morse theory to do things like compute the cohomology ring. However, using Uhlenbeck compactness Hitchin showed that the function $f$ proper \cite{selfduality} even on the singular locus. Hence, $f$ attains a minimum on every closed subset. In particular, we have the following upper bound on the number of components
\begin{gather*}|\pi_0(\Mm(\sG))|\leq |\pi_0(\{\text{local minima of $f$}\})|.\end{gather*}

There are three subvarieties which intersect at a $\C^*$-fixed point $\big[\bar\p_P,\varphi\big]$
\begin{enumerate}\itemsep=0pt
\item[1)] $W^s\big({\big[\bar\p_P,\varphi\big]}\big)=\big\{\big[\bar\p_P',\vartheta\big]\,|\,\lim\limits_{\lambda\to0}\big[\bar\p_P',\lambda\vartheta\big]=\big[\bar\p_P,\varphi\big]\big\}$,
\item[2)] $W^u\big({\big[\bar\p_P,\varphi\big]}\big)=\big\{\big[\bar\p_P',\vartheta\big]\,|\,\lim\limits_{\lambda\to\infty}\big[\bar\p_P',\lambda\vartheta\big]=\big[\bar\p_P,\varphi\big]\big\}$,
\item[3)] $W^0\big({\big[\bar\p_P,\varphi\big]}\big)$ the connected component of the fixed point locus containing $\big[\bar\p_P,\varphi\big]$.
\end{enumerate}
For a stable fixed point $\big[\bar\p_P,\varphi\big]$, these are exactly the stable, unstable and critical submanifolds of the Morse--Bott function $f$ at the critical point $\big[\bar\p_P,\varphi\big]$. A fixed point $\big[\bar\p_P,\varphi\big]$ is thus a local minima of $f$ if and only if $W^u\big({\big[\bar\p_P,\varphi\big]}\big)=\big\{\big[\bar\p_P,\varphi\big]\big\}$.
\begin{Remark}\label{remark inequality for components}Recall from before that we have a lower bound on $\pi_0(\Mm(\sG))$ given by $\pi_0(\Mm(\sH))$. Moreover, when $\sH$ is semisimple we have $\pi_0(\Mm(\sH))$ is in bijective correspondence with topological $\sH$-bundles $\Bb_H(X)$. This gives the following inequality
\begin{gather*}|\pi_0(\Mm(\sH))|\leq |\pi_0(\Mm(\sG))|\leq |\pi_0(\{\text{local minima of $f$}\})|.\end{gather*}
\end{Remark}
We now have a strategy for counting the components of the character variety $\Xx(\sG)$. Namely we should classify local minima of~$f$ and show that every component of local minima defines a~component of the moduli space. In particular, if the only local minima have $\varphi=0$, then every Higgs bundle can be reduced to $\sH$, and if a component of~$\Mm(\sG)$ has the property that the Higgs field can never be deformed to zero, then no representation in the associated component of the character variety can be deformed to a compact representation.

\subsection{Local minima criterion}
We first describe how for fixed points of the $\C^*$-action we get a decomposition of the tangent space into weight spaces. Recall from Proposition~\ref{Prop G Higgs fixed point} that associated to a polystable $\sG$-Higgs bundle $(\Pp,\varphi)$ fixed by the $\C^*$-action there is a $\Z$-grading $\fg^\C=\bigoplus_j\fh^\C_j\oplus\fm^\C_j$ and a holomorphic structure group reduction~$\Pp_{\sH^\C_0}$ so that $\varphi\in H^0\big(\Pp_{\sH^\C_0}\big[\fm_1^\C\big]\otimes K\big)$.

For such a fixed point, the map $\ad_\varphi\colon \Pp\big[\fh^\C\big]\to\Pp\big[\fm^\C\big]\otimes K$ defines a map
\begin{gather*}\ad_{\varphi}\colon \ \Pp_{\sH^\C_0}\big[\fh^\C_j\big]\to\Pp_{\sH^\C_0}\big[\fm^\C_{j+1}\big]\otimes K.\end{gather*}
For stable fixed points this gives a decomposition of the exact sequence \eqref{eq exact sequence}, that is, for all $j$ we have
\begin{gather*}%\label{eq graded exact sequence}
\xymatrix@R=1em@C=2em{0\ar[r]&H^0\big(\Pp_{\sH^\C_0}\big[\fh_j^\C\big]\big)\ar[r]^{\ad_\varphi \ \ \ }&H^0\big(\Pp_{\sH^\C_0}\big[\fm_{j+1}^\C\big]\otimes K\big)\ar[r]^{\ \ i}&T^j_{[\bar\p_P,\varphi]}\Mm(\sG)\ar@{->}`r/3pt [d] `/10pt[l] `^d[lll] `^r/3pt[d][dll]_{\pi\ \ \ \ \ \ \ \ \ }\\&H^1\big(\Pp_{\sH^\C_0}\big[\fh_j^\C\big]\big)\ar[r]_{\ad_\varphi \ \ \ }&H^1\big(\Pp_{\sH^\C_0}\big[\fm_{j+1}^\C\big]\otimes K\big),&}
\end{gather*}
where, similarly to \eqref{eq tangent space}, $T^j_{[\bar\p_P,\varphi]}\Mm(\sG)$ is defined by
\begin{gather*}T^j_{[\bar\p_P,\varphi]}\Mm(\sG)=\dfrac{\big\{(\alpha,\psi)\in\Omega^{0,1}\big(\Pp_{\sH^\C_0} \big[\fh^\C_j\big]\big)\oplus\Omega^{1,0}\big(\Pp_{\sH^\C_0}\big[\fm^\C_{j+1}\big]\big)\,|\,\bar\p_P\psi+[\alpha,\varphi]=0\big\}}{\big\{\big(\bar\p_Px,[\varphi x]\big)\in\Omega^{0,1}\big(\Pp_{\sH^\C_0}\big[\fh^\C_j\big]\big)\oplus\Omega^{1,0}\big(\Pp_{\sH^\C_0}\big[\fm^\C_{j+1}\big]\big)\,|\,x\in \Omega^0\big(\Pp_{\sH^\C_0}\big[\fh_{j}^\C\big]\big)\big\}}. \end{gather*}

 The following result was proven for $\sSL(n,\C)$ by Hitchin in \cite{liegroupsteichmuller}, the general case follows from arguments analogous to the Morse--Bott function's index computation of Hitchin in \cite[Section~8]{liegroupsteichmuller}.
\begin{Theorem}[Hitchin \cite{selfduality, liegroupsteichmuller}] Let $f\colon \Mm(\sG)\to \R$ be the Morse--Bott function from~\eqref{eq Morse-Bott functions}. For a stable $\sG$-Higgs bundle we have the following:
\begin{itemize}\itemsep=0pt
\item $\big[\bar\p_P,\varphi\big]$ is a fixed point of the $\C^*$-action if and only if it is a critical point of the func\-tion~$f$,
\item $T_{[\bar\p_P,\varphi]}W^u\big(\big[\bar\p_P,\varphi\big]\big)=\bigoplus\limits_{j>0}T^j_{[\bar\p_P,\varphi]}\Mm(\sG)$,
\item $T_{[\bar\p_P,\varphi]}W^s\big(\big[\bar\p_P,\varphi\big]\big)=\bigoplus\limits_{j<0}T^j_{[\bar\p_P,\varphi]}\Mm(\sG)$,
\item $T_{[\bar\p_P,\varphi]}W^0\big({\big[\bar\p_P,\varphi\big]}\big)=T^0_{[\bar\p_P,\varphi]}\Mm(\sG)$.
\end{itemize}
\end{Theorem}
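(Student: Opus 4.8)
The plan is to split the four assertions into an identification of critical points with fixed points and a weight-space analysis of the three invariant sets through $\big[\bar\p_P,\varphi\big]$. For the first bullet I would argue entirely from the moment-map identity $\mathrm{grad}(f)=IX$, where $X$ generates the $\sU(1)$-action and $I$ is the complex structure on the smooth locus of $\Mm(\sG)$. Since $I$ is invertible, $\mathrm{grad}(f)$ vanishes at $\big[\bar\p_P,\varphi\big]$ if and only if $X$ does, so the critical points of $f$ are exactly the $\sU(1)$-fixed points. To promote this to the $\C^*$-fixed locus I would use that the $\C^*$-action is the holomorphic complexification of the $\sU(1)$-action: writing $\lambda=e^{s+i\theta}$, the $\theta$-flow is generated by $X$ and the $s$-flow by $IX=\mathrm{grad}(f)$, so both generators vanish at precisely the same points. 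Hence the $\sU(1)$-fixed locus, the $\C^*$-fixed locus, and the critical locus of $f$ coincide, which is the first bullet. That $f$ is proper and Morse--Bott on the smooth locus is the analytic input imported from Hitchin.

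For the remaining bullets the first step is to compute the weight of the linearized $\C^*$-action on each summand $T^j_{[\bar\p_P,\varphi]}\Mm(\sG)$. By Proposition~\ref{Prop G Higgs fixed point} a stable fixed point carries a $\Z$-grading $\fg^\C=\bigoplus_j\big(\fh^\C_j\oplus\fm^\C_j\big)$ and a reduction $\Pp_{\sH^\C_0}$ with $\varphi\in H^0\big(\Pp_{\sH^\C_0}\big[\fm^\C_1\big]\otimes K\big)$, and the gauge transformation $g_\lambda$ satisfying $\Ad_{g_\lambda^{-1}}\varphi=\lambda\varphi$ acts on $\fg^\C_j$ by $\Ad_{g_\lambda^{-1}}=\lambda^{j}$. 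The $\C^*$-action scales the Higgs field; in the fixed gauge it is realized by first scaling $\psi\mapsto\lambda\psi$ and then regauging by $g_\lambda^{-1}$ to return to $\big[\bar\p_P,\varphi\big]$. On $T^j$, where $\alpha\in\Omega^{0,1}\big(\fh^\C_j\big)$ and $\psi\in\Omega^{1,0}\big(\fm^\C_{j+1}\big)$, regauging scales $\alpha$ by $\lambda^{-j}$ and $\psi$ by $\lambda^{-(j+1)}$, so together with the Higgs scaling the net action is $(\alpha,\psi)\mapsto\lambda^{-j}(\alpha,\psi)$. Thus $T^j$ is the weight $-j$ subspace for the linearized $\C^*$-action.

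With the weights recorded, the last three bullets are the Bialynicki--Birula description of the action near the fixed point. A tangent vector in $T^j$ is dragged by $\lambda$ with factor $\lambda^{-j}$, so it is attracted to $\big[\bar\p_P,\varphi\big]$ as $\lambda\to\infty$ exactly when $j>0$, is attracted as $\lambda\to0$ exactly when $j<0$, and is fixed when $j=0$. Matching these with the definitions of $W^u$ (the $\lambda\to\infty$ basin), $W^s$ (the $\lambda\to0$ basin) and $W^0$ (the fixed-point component) gives $T_{[\bar\p_P,\varphi]}W^u=\bigoplus_{j>0}T^j$, $T_{[\bar\p_P,\varphi]}W^s=\bigoplus_{j<0}T^j$, and $T_{[\bar\p_P,\varphi]}W^0=T^0$; since every weight is positive, negative, or zero, the three pieces exhaust the whole weight decomposition of the tangent space, so the inclusions are equalities.

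The genuinely hard part is not the formal weight bookkeeping but the analytic foundation that makes it meaningful: one needs that $f$ is a proper Morse--Bott function on the smooth locus, that the holomorphic $\C^*$-action linearizes near a stable fixed point, and hence that $W^s$, $W^u$, and $W^0$ are smooth submanifolds whose tangent spaces are the predicted weight spaces with no degeneracies. This rests on Uhlenbeck compactness and the explicit Hessian index computation of Hitchin in \cite[Section~8]{liegroupsteichmuller}, of which the computation above is the formal shadow. The restriction to stable Higgs bundles is essential, since $\Mm(\sG)$ is singular at non-stable points and the differential-geometric picture breaks down there.
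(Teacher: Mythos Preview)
The paper does not actually prove this theorem: it is stated with attribution to Hitchin, preceded only by the sentence ``The following result was proven for $\sSL(n,\C)$ by Hitchin in \cite{liegroupsteichmuller}, the general case follows from arguments analogous to the Morse--Bott function's index computation of Hitchin in \cite[Section~8]{liegroupsteichmuller}.'' This is consistent with the survey's stated policy of not proving foundational theorems. So there is no in-paper proof to compare your sketch against.

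On its own merits, your sketch is the standard argument and is sound at the level of detail you give. The identification of critical points with $\C^*$-fixed points via $\mathrm{grad}(f)=IX$ is correct, and your weight computation is right: with $g_\lambda$ chosen so that $\Ad_{g_\lambda^{-1}}$ acts by $\lambda^j$ on $\fg^\C_j$, the combined effect of scaling the Higgs field and regauging multiplies $(\alpha,\psi)\in T^j$ by $\lambda^{-j}$, so the Bialynicki--Birula weight on $T^j$ is $-j$ and the matching of $W^u$, $W^s$, $W^0$ with $j>0$, $j<0$, $j=0$ follows exactly as you wrote. One small point you are tacitly using: that $g_\lambda$ acts by the scalar $\lambda^j$ on all of $\fg^\C_j$, not just on $\varphi$, requires that the grading in Proposition~\ref{Prop G Higgs fixed point} is induced by a semisimple element of $\fh^\C_0$ exponentiating to $g_\lambda$; this is part of the content of that proposition but is worth making explicit. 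Your final paragraph correctly flags that the substantive content---properness of $f$, the Morse--Bott property, and linearizability of the holomorphic action near a stable fixed point---is imported from Hitchin's original analysis, which is exactly how the paper handles it as well.
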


\begin{Corollary}A stable fixed point $\big[\bar\p_P,\varphi\big]\in\Mm(\sG)$ is a local minimum of $f$ if and only if $\bigoplus\limits_{j>0}T^j_{[\bar\p_P,\varphi]}\Mm(\sG)=0$.
\end{Corollary}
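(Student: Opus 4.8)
The plan is to read the corollary off directly from the Morse--Bott picture of the preceding Theorem, together with the observation recorded just before it: since $\mathrm{grad}(f)=IX$ generates the $\C^*$-scaling of the Higgs field, a stable fixed point $\big[\bar\p_P,\varphi\big]$ is a local minimum of $f$ if and only if its unstable manifold $W^u\big(\big[\bar\p_P,\varphi\big]\big)$ consists of the single point $\big\{\big[\bar\p_P,\varphi\big]\big\}$.

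First I would invoke the Theorem to identify the tangent space $T_{[\bar\p_P,\varphi]}W^u\big(\big[\bar\p_P,\varphi\big]\big)$ with the positive weight space $\bigoplus_{j>0}T^j_{[\bar\p_P,\varphi]}\Mm(\sG)$. Because the point is stable, $\Mm(\sG)$ is smooth there, so $W^u\big(\big[\bar\p_P,\varphi\big]\big)$ is a smooth submanifold through it; in the Morse--Bott setting it is in fact an embedded cell whose dimension is the number of transverse directions along which $f$ decreases. The reduction is then elementary: a smooth submanifold passing through a given point equals that single point exactly when its tangent space vanishes. Hence $W^u\big(\big[\bar\p_P,\varphi\big]\big)=\big\{\big[\bar\p_P,\varphi\big]\big\}$ if and only if $\bigoplus_{j>0}T^j_{[\bar\p_P,\varphi]}\Mm(\sG)=0$, and chaining this with the local-minimum criterion above yields the corollary.

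The only substantive input---rather than a genuine obstacle---is the Morse--Bott equivalence relating local minima to triviality of $W^u$, which must accommodate the possibility that the critical locus $W^0\big(\big[\bar\p_P,\varphi\big]\big)$ is positive-dimensional. Here the relation $\mathrm{grad}(f)=IX$ makes $f$ Morse--Bott with Hessian positive semidefinite along $W^0$, and the directions transverse to $W^0$ on which $f$ strictly decreases are precisely the unstable directions, spanning $\bigoplus_{j>0}T^j_{[\bar\p_P,\varphi]}\Mm(\sG)$ by the Theorem. A local minimum is therefore exactly the case in which no such direction occurs, i.e., the positive weight space vanishes.
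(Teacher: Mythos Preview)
Your proposal is correct and follows exactly the approach the paper intends: the Corollary is stated without proof as an immediate consequence of the preceding Theorem together with the observation that a fixed point is a local minimum of $f$ if and only if $W^u\big(\big[\bar\p_P,\varphi\big]\big)=\big\{\big[\bar\p_P,\varphi\big]\big\}$. Your argument simply spells out this implication, and there is nothing to add.
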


Using the sequence \eqref{eq exact sequence}, if $\ad_\varphi\colon \Pp\big[\fh^\C_j\big]\to\Pp[\fm_{j+1}]\otimes K$ is an isomorphism for all $j>0$, then $T^j_{[\bar\p_P,\varphi]}\Mm(\sG)=0$ for all $j>0$ and we are at a local minimum of the $f$. In fact the converse holds as well (see \cite[Section 3.4]{BGGHomotopyGroups}), and we have a classification of stable local minima of the Morse--Bott function $f$.
\begin{Proposition}\label{prop stable min}A stable $\sG$-Higgs bundle $\big(\bar\p_P,\varphi\big)$ which is a $\C^*$-fixed point is a local minima of the function~$f$ from~\eqref{eq Morse-Bott functions} if and only if
\begin{gather*}\ad_{\varphi}\colon \ \Pp\big[\fh^\C_j\big]\to\Pp[\fm_{j+1}]\otimes K\end{gather*}
is an isomorphism for all $j>0$.
\end{Proposition}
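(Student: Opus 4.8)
The plan is to read minimality off the graded refinement of the exact sequence \eqref{eq exact sequence}. By the preceding Corollary, a stable fixed point $[\bar\p_P,\varphi]$ is a local minimum of $f$ from \eqref{eq Morse-Bott functions} exactly when $T^j_{[\bar\p_P,\varphi]}\Mm(\sG)=0$ for all $j>0$, so it suffices to match this vanishing with the statement that each $\ad_\varphi\colon\Pp[\fh^\C_j]\to\Pp[\fm^\C_{j+1}]\otimes K$ is a bundle isomorphism. Proposition~\ref{Prop G Higgs fixed point} supplies exactly the structure needed: the weight-$j$ part of the deformation complex is the two-term complex $\big[\Pp[\fh^\C_j]\xrightarrow{\ad_\varphi}\Pp[\fm^\C_{j+1}]\otimes K\big]$, whose first hypercohomology is $T^j_{[\bar\p_P,\varphi]}\Mm(\sG)$.

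For the direction already indicated in the text, suppose $\ad_\varphi$ is an isomorphism in every positive weight. Then it induces isomorphisms on $H^0$ and on $H^1$. Surjectivity on $H^0$ makes the map $i$ in the graded version of \eqref{eq exact sequence} vanish, while injectivity on $H^1$ makes $\pi$ vanish; exactness then forces $T^j_{[\bar\p_P,\varphi]}\Mm(\sG)=0$ for all $j>0$, and the Corollary identifies $[\bar\p_P,\varphi]$ as a local minimum.

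The substantive content is the converse. I would first rewrite each graded piece as the hypercohomology short exact sequence
\[0\to H^1(\ker\ad_\varphi)\to T^j_{[\bar\p_P,\varphi]}\Mm(\sG)\to H^0(\operatorname{coker}\ad_\varphi)\to 0,\]
valid on a curve, where $\ker$ and $\operatorname{coker}$ denote the sheaf kernel and cokernel of $\ad_\varphi\colon\Pp[\fh^\C_j]\to\Pp[\fm^\C_{j+1}]\otimes K$. Hence $T^j=0$ forces both $H^1(\ker\ad_\varphi)=0$ and $H^0(\operatorname{coker}\ad_\varphi)=0$. The saturated subsheaf $\ker\ad_\varphi\subset\Pp[\fh^\C_j]$ is $\varphi$-invariant, so it underlies a holomorphic subobject of the $\sG$-Higgs bundle; polystability constrains the degrees of $\varphi$-invariant subbundles, while on a genus $g\geq2$ curve the vanishing $H^1(\ker\ad_\varphi)=0$ would force a nonzero kernel to have large positive degree. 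These are incompatible, so $\ker\ad_\varphi=0$ and $\ad_\varphi$ is injective in each positive weight.

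It remains to promote injectivity to an isomorphism, and this is the step I expect to be the main obstacle. I would exploit the symmetry of the grading under $j\mapsto-j$: the Killing form on $\fg^\C$ pairs $\Pp[\fh^\C_j]$ with $\Pp[\fh^\C_{-j}]$ and $\Pp[\fm^\C_j]$ with $\Pp[\fm^\C_{-j}]$, and its invariance makes $\ad_\varphi$ anti-self-adjoint, so the transpose of $\ad_\varphi$ in weight $j$ is again $\ad_\varphi$, now in the negative weight $-j-1$. Combined with Serre duality this identifies $H^0(\operatorname{coker}\ad_\varphi)$ in weight $j$ with the dual of a kernel in a negative weight, and one uses this together with polystability to force the ranks of $\Pp[\fh^\C_j]$ and $\Pp[\fm^\C_{j+1}]$ to agree. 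Once the ranks match, injectivity makes $\operatorname{coker}\ad_\varphi$ a torsion sheaf, and $H^0(\operatorname{coker}\ad_\varphi)=0$ then forces it to vanish, so $\ad_\varphi$ is an isomorphism for all $j>0$. Organizing the rank comparison and the cokernel control so that polystability and the weight-reversal duality cooperate is the delicate point; the detailed argument is carried out in \cite[Section~3.4]{BGGHomotopyGroups}.
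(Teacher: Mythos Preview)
Your treatment coincides with the paper's: the paragraph preceding the proposition dispatches the ``if'' direction via the graded exact sequence and simply cites \cite[Section~3.4]{BGGHomotopyGroups} for the converse, giving no further argument. Your proposal does the same and ends with the same citation; the intervening sketch you offer for the converse is extra, and be aware that the argument actually carried out in \cite{BGGHomotopyGroups} proceeds via a Riemann--Roch/Euler-characteristic computation combined with the Serre--Killing duality rather than through polystability bounds on $\ker\ad_\varphi$ as you outline.
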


\subsection{Some component results}\label{section some comp results}
We have now developed necessary tools to show the map $\tau\colon \Xx(\Gamma,\sG)\to\Bb_\sG(S)$ from \eqref{eq top invariant map} is injective. In fact, the proof is very simple with the above set up.
\begin{Theorem}[Garc\'{\i}a-Prada and Oliveira \cite{Oliveira_GarciaPrada_2016}] Let $\sG$ be a complex reductive Lie group with maximal compact subgroup~$\sH$. Then there is a bijection between the components of the moduli space of polystable $\sG$-Higgs bundles and the moduli space of polystable $\sG$-bundles:
\begin{gather*}\pi_0(\Mm(\sG))=\pi_0(\Mm(\sH)).\end{gather*}
\end{Theorem}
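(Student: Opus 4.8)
The inclusion $\Mm(\sH)\subset\Mm(\sG)$ as the locus $\{\varphi=0\}$ induces a map $\pi_0(\Mm(\sH))\to\pi_0(\Mm(\sG))$, and the goal is to show this map is a bijection. The plan is to squeeze the chain of inequalities recorded in Remark~\ref{remark inequality for components}, namely $|\pi_0(\Mm(\sH))|\le|\pi_0(\Mm(\sG))|\le|\pi_0(\{\text{local minima of }f\})|$, until it collapses. Since the function $f$ from \eqref{eq Morse-Bott functions} vanishes exactly on $\Mm(\sH)\subset\Mm(\sG)$, everything reduces to a single claim: when $\sG$ is complex, \emph{every local minimum of $f$ has $\varphi=0$}, i.e.\ the local minima are precisely the points of $\Mm(\sH)$. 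Granting this, the right-hand inequality becomes $|\pi_0(\Mm(\sG))|\le|\pi_0(\Mm(\sH))|$ and the three cardinalities agree.

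The heart of the argument is this claim, and it is exactly where complexity of $\sG$ enters. Let $(\Pp,\varphi)$ be a stable $\C^*$-fixed point that is a local minimum. By Proposition~\ref{Prop G Higgs fixed point} there is a $\Z$-grading $\fg^\C=\bigoplus_j\fh^\C_j\oplus\fm^\C_j$ with $\varphi\in H^0(\Pp_{\sH^\C_0}[\fm^\C_1]\otimes K)$, and by Proposition~\ref{prop stable min} being a local minimum forces $\ad_\varphi\colon \Pp[\fh^\C_j]\to\Pp[\fm^\C_{j+1}]\otimes K$ to be an isomorphism for every $j>0$. Now I use that $\sG=\sH^\C$ is complex: here $\fh^\C=\fg$ and $\fm^\C\cong\fg$, and the grading is the same grading $\fg=\bigoplus_j\fg_j$ of the complex Lie algebra on both copies, so $\fh^\C_j\cong\fm^\C_j\cong\fg_j$. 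Suppose $\varphi\neq0$; then $\fg_1\cong\fm^\C_1\neq0$, so the grading is nontrivial and has a top weight $N=\max\{j:\fg_j\neq0\}\ge 1$. Applying the isomorphism criterion at $j=N$ gives $\ad_\varphi\colon \Pp[\fh^\C_N]\to\Pp[\fm^\C_{N+1}]\otimes K=0$, which cannot be an isomorphism because $\fh^\C_N\cong\fg_N\neq0$. This contradiction forces $\varphi=0$. Note that this rests squarely on the symmetry $\fh^\C_j\cong\fm^\C_j$, which fails for real forms and is precisely what allows real groups to carry local minima with $\varphi\neq0$, such as the Hitchin components.

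With the claim established I would finish quickly. For surjectivity of $\pi_0(\Mm(\sH))\to\pi_0(\Mm(\sG))$: since $f$ is proper it attains a minimum on each connected component of $\Mm(\sG)$, and by the claim this minimum has $\varphi=0$, hence lies in $\Mm(\sH)$; thus every component of $\Mm(\sG)$ meets $\Mm(\sH)$. For injectivity: the topological type $\tau$ is locally constant on $\Mm(\sG)$, while on the compact group $\sH$ it separates the components of $\Mm(\sH)$ by Theorem~\ref{thm tau injective compact}; so two minima lying in a common component of $\Mm(\sG)$ share a topological type and therefore lie in the same component of $\Mm(\sH)$. The induced map is then a bijection, giving $\pi_0(\Mm(\sG))=\pi_0(\Mm(\sH))$.

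The main obstacle I anticipate is the reduction to \emph{stable} fixed points: Proposition~\ref{prop stable min} is stated for stable Higgs bundles, whereas the minimum produced by properness may be only strictly polystable. I would handle this by decomposing a polystable minimum into its stable summands for the reduced structure group and running the weight argument of the second paragraph on each summand, concluding that every summand, and hence the total Higgs field, vanishes; the remaining bookkeeping is routine.
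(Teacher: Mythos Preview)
Your proposal is correct and follows essentially the same route as the paper: reduce via the squeeze inequality of Remark~\ref{remark inequality for components} to proving that every local minimum of $f$ has $\varphi=0$, use the $\Z$-grading from Proposition~\ref{Prop G Higgs fixed point} together with the isomorphism criterion of Proposition~\ref{prop stable min} and the identification $\fh^\C_j\cong\fm^\C_j$ in the complex case to force the grading to be trivial, and then handle the strictly polystable case by reducing to a Levi subgroup (your ``stable summands for the reduced structure group'' is exactly the paper's reduction to a Levi factor $\sL$). Your top-weight contradiction is just an explicit version of the paper's observation that $\Pp_{\sH^\C_0}[\fh^\C_j]\cong\Pp_{\sH^\C_0}[\fm^\C_{j+1}]\otimes K$ for all $j>0$ forces $\fh^\C_j=0$ for $j>0$.
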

\begin{proof}
By the above discussion and Remark~\ref{remark inequality for components}, it suffices to show that a polystable $\sG$-Higgs bundle $[\Pp,\varphi]$ is a local minima of the Morse--Bott function~$f$ from~\eqref{eq Morse-Bott functions} if and only if $\varphi=0$. Let $[\Pp,\varphi]$ be a local minima of $f$. Since $[\Pp,\varphi]$ is a $\C^*$-fixed point, by Proposition~\ref{Prop G Higgs fixed point} there is a~$\Z$-grading $\fg^\C=\bigoplus_j\fh^\C_j\oplus\fm^\C_j$ so that $(\Pp,\varphi)$ is isomorphic to $(\Pp_{\sH^\C_o},\varphi)$ with $\varphi\in H^0\big(\Pp_{\sH^\C_0}\big[\fm^\C_1\big]\otimes K\big)$.

First suppose $\big[\bar\p_P,\varphi\big]$ is a stable local minima of $f$. Then by Proposition~\ref{prop stable min} we have
\begin{gather*}\ad_\varphi\colon \ \Pp_{\sH^\C_0}\big[\fh^\C_j\big]\to \Pp_{\sH^\C_0}\big[\fm^\C_{j+1}\big]\otimes K\end{gather*}
is an isomorphism for all $j>0$. But, since $\fg$ is complex, we have $\fh^\C\cong\fm^\C$, and the only way $\Pp_{\sH^\C_0}\big[\fh^\C_j\big]$ can be isomorphic to $\Pp_{\sH^\C_0}\big[\fm^\C_{j+1}\big]\otimes K$ for all $j>0$ is for $\Pp_{\sH^\C_0}\big[\fh^\C_j\big]=0$ for all $j>0$. In this case we have $\varphi=0$.

To rule out strictly polystable minima with nonzero Higgs field, we note that a $\sG$-Higgs bundle which is strictly polystable has a holomorphic reduction to a Levi factor $\sL$ of a parabolic subgroup of $\sG$ which is stable as a $\sL$-Higgs bundle. Now repeat the above argument for the moduli space~$\Mm(\sL)$.
\end{proof}

As a immediate corollary we have the following.
\begin{Corollary}If $\sG$ is a complex reductive Lie group, then every polystable $\sG$-Higgs bundle $\big[\bar\p_P,\varphi\big]$ can be continuously deformed to a polystable $\sG$-bundle, i.e., a polystable $\sG$-Higgs bundle with zero Higgs field.
\end{Corollary}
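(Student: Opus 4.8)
The plan is to read off the corollary from the preceding theorem together with the fact that the connected components of $\Mm(\sG)$ are path-connected. Recall that for complex reductive $\sG$ the theorem asserts $\pi_0(\Mm(\sG))=\pi_0(\Mm(\sH))$, where the bijection is induced by the inclusion $\Mm(\sH)\hookrightarrow\Mm(\sG)$ of those Higgs bundles with vanishing Higgs field (this is how the lower bound of Remark~\ref{remark inequality for components} is realized). In particular every connected component of $\Mm(\sG)$ meets $\Mm(\sH)$, hence contains a polystable $\sG$-Higgs bundle whose Higgs field is zero.

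Concretely, I would fix an arbitrary polystable $\sG$-Higgs bundle $\big[\bar\p_P,\varphi\big]$ and let $C\subset\Mm(\sG)$ be the connected component containing it. By the theorem $C$ contains some point $\big[\bar\p_{P'},0\big]\in\Mm(\sH)$. Since $\sG$ is complex reductive, $\Mm(\sG)$ is a complex quasi-projective variety, and complex varieties are locally path-connected in the classical topology, so the connected component $C$ is in fact path-connected. A continuous path in $C$ from $\big[\bar\p_P,\varphi\big]$ to $\big[\bar\p_{P'},0\big]$ is then precisely a continuous deformation of the given Higgs bundle to a polystable $\sG$-bundle, which is what we want.

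The point I would emphasize, and the only genuine subtlety, is why one cannot simply scale the Higgs field to zero along $\lambda\mapsto\big[\bar\p_P,\lambda\varphi\big]$. Although $\big(\bar\p_P,\varphi\big)$ is polystable as a Higgs bundle, the underlying holomorphic bundle $\bar\p_P$ need not be polystable on its own, so $\big[\bar\p_P,0\big]$ may fail to define a point of $\Mm(\sG)$ and the naive path leaves the polystable locus. This is exactly where the complex-group hypothesis is used: the Morse-theoretic analysis of $f$ in the proof of the theorem shows that for complex $\sG$ the only local minima have $\varphi=0$, forcing each component to be labelled by a topological $\sH$-bundle and therefore to contain a bona fide $\sG$-bundle.

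I expect the only step requiring care to be the passage from ``connected'' to ``path-connected'' for the components, i.e., the local path-connectedness of the (generally singular) moduli space; once this is granted the deformation is produced formally from the $\pi_0$-bijection. An alternative, more hands-on route would be to flow $\big[\bar\p_P,\varphi\big]$ downward along the gradient of the proper Morse--Bott function $f$ to a critical point and then move within the resulting critical component toward a minimum, using that minima occur only at $\varphi=0$. However, this requires controlling convergence of the gradient flow on the singular locus, so the component-counting argument above is the cleaner and genuinely ``immediate'' derivation.
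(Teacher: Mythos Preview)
Your argument is correct and is precisely the ``immediate'' derivation the paper has in mind: the paper gives no proof at all for this corollary, merely stating it follows at once from the preceding theorem, and your write-up supplies the missing sentence about path-connectedness (via the complex-variety structure of $\Mm(\sG)$) that turns the $\pi_0$-bijection into an honest continuous deformation. Your side remarks on why naive scaling $\lambda\varphi\to0$ fails and on the gradient-flow alternative are accurate and helpful context, though not needed for the argument itself.
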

Using the nonabelian Hodge correspondence, Theorem~\ref{thm tau injective complex} now follows as a corollary of the above theorem.
\begin{Corollary}
If $\sG$ is a complex reductive Lie group with maximal compact~$\sH$, then the map $\tau\colon \pi_0(\Xx(\Gamma,\sG))\to\Bb_\sG(S)$ from~\eqref{eq top invariant map} is injective. In particular, every representation $\rho\colon \Gamma\to\sG$ can be deformed to a compact representation $\Gamma\to\sH\hookrightarrow \sG$.
\end{Corollary}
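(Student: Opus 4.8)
The plan is to reduce everything to the two facts just established: the nonabelian Hodge correspondence (Theorem~\ref{Thm: Nonabelian Hodge Correspondence}), which identifies $\pi_0(\Xx(\Gamma,\sG))$ with $\pi_0(\Mm(\sG))$, and the preceding theorem of Garc\'{\i}a-Prada and Oliveira, which gives $\pi_0(\Mm(\sG))=\pi_0(\Mm(\sH))$. Together these already place the components of the character variety in bijection with the components of the moduli of polystable $\sH$-bundles. What remains is to check that this bijection is realized by the topological invariant $\tau$, for which I would invoke the compact case (Theorem~\ref{thm tau injective compact}).

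First I would set up the chain of identifications. Since $\sH\simeq\sG$, we have $\Bb_\sG(S)=\Bb_\sH(S)$, and the inclusion $\sH\hookrightarrow\sG$ induces the identity on $\Bb$. The preceding corollary shows that every polystable $\sG$-Higgs bundle deforms, within its connected component of $\Mm(\sG)$, to a polystable $\sH^\C$-bundle (zero Higgs field), and the Garc\'{\i}a-Prada and Oliveira theorem says each component of $\Mm(\sG)$ meets $\Mm(\sH)$ in exactly one component. Because $\tau$ is locally constant, its value on a component of $\Mm(\sG)$ agrees with its value on the unique component of $\Mm(\sH)$ contained in it, and the latter is just the topological type of the underlying $\sH$-bundle viewed in $\Bb_\sH(S)=\Bb_\sG(S)$.

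With this in hand, injectivity follows formally. Given two components $C_1,C_2$ of $\Xx(\Gamma,\sG)$ with $\tau(C_1)=\tau(C_2)$, the above identifies each $C_i$ with a component $M_i$ of $\Mm(\sH)\cong\Xx(\Gamma,\sH)$ whose compact topological invariant equals $\tau(C_i)$. Since $\sH$ is compact, Theorem~\ref{thm tau injective compact} gives that $\tau$ is injective on $\pi_0(\Xx(\Gamma,\sH))$, so $M_1=M_2$; running the bijections backward yields $C_1=C_2$. Finally, the ``in particular'' clause is immediate: deforming the Higgs field to zero is, under the nonabelian Hodge correspondence, precisely a deformation of $\rho$ to a representation into the compact form $\sH$, which is the content of the preceding corollary.

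The main obstacle I anticipate is the bookkeeping of $\tau$ across the three correspondences, namely checking that the topological type assigned to a $\sG$-Higgs bundle, to its minimal $\sH^\C$-reduction, and to the corresponding $\sH$-representation all agree as elements of the single set $\Bb_\sG(S)=\Bb_\sH(S)$. Once one grants that $\tau$ is locally constant and compatible with the inclusion $\sH\hookrightarrow\sG$, the argument is purely a matter of composing bijections; the genuine mathematical content lives entirely in the preceding theorem and in the compact case.
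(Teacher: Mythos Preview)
Your proposal is correct and follows essentially the same route as the paper, which simply states that the corollary follows from the nonabelian Hodge correspondence applied to the preceding theorem; you have just spelled out the compatibility bookkeeping and the invocation of Theorem~\ref{thm tau injective compact} more carefully than the paper does.
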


Using the methods described above, Hitchin gave a complete component count of \linebreak $\Mm(\sP\sSL(n,\R))$. The proof idea is to first classify the stable local minima using Proposition~\ref{Prop G Higgs fixed point}, then construct explicit deformations of strictly polystable fixed points with nonzero Higgs field which decreases the value of $f$.

\begin{Theorem}[Hitchin \cite{liegroupsteichmuller}] \label{thm min PSLn}For $n>2$, the only local minima of the Morse--Bott function~\eqref{eq Morse-Bott functions} on $\Mm(\sP\sSL(n,\R))$ are $\varphi=0$ and the image of 0 in the Hitchin section.
\end{Theorem}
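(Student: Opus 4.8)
The plan is to combine the Morse-theoretic description of the critical points with the classification of $\C^*$-fixed points, treating the stable and strictly polystable cases separately. Since a local minimum of the function $f$ from \eqref{eq Morse-Bott functions} is a $\C^*$-fixed point, Proposition~\ref{Prop G Higgs fixed point} furnishes a $\Z$-grading $\fsl(n,\C)=\bigoplus_j\fh^\C_j\oplus\fm^\C_j$, a holomorphic reduction $\Pp_{\sH^\C_0}$, and a Higgs field $\varphi\in H^0(\Pp_{\sH^\C_0}[\fm^\C_1]\otimes K)$ which raises the weight by one. For $\sP\sSL(n,\R)$ this grading is an orthogonal holomorphic chain: a splitting $E=\bigoplus_k F_k$ of the underlying rank $n$ bundle, with the symmetric form identifying $F_k\cong F_{-k}^*$, under which $\fh^\C=\fso(E)$ and $\fm^\C=\Sym_0(E)$ acquire weight decompositions $\End(E)_j=\bigoplus_k\Hom(F_k,F_{k+j})$. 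The analysis is purely at the level of $\fsl(n,\C)$ and its Cartan decomposition, so it is identical for $\sSL(n,\R)$ and $\sP\sSL(n,\R)$, the global structure only affecting which square roots of $K$ appear.

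For a \emph{stable} fixed point, Proposition~\ref{prop stable min} reduces the minimum condition to the requirement that
\begin{gather*}\ad_\varphi\colon \ \Pp_{\sH^\C_0}\big[\fh^\C_j\big]\to \Pp_{\sH^\C_0}\big[\fm^\C_{j+1}\big]\otimes K\end{gather*}
be a sheaf isomorphism for every $j>0$. The heart of the proof is to run this condition from the top weight downward. The transpose with respect to $Q$ preserves each weight $j$ and splits $\End(E)_j$ into symmetric and antisymmetric parts; at the maximal weight $2m$ (with $m$ the top index, nonzero as soon as $\varphi\neq0$) one has $\fso(E)_{2m}\cong\Lambda^2 F_{-m}^*$ while $\Sym_0(E)_{2m+1}=0$, so the isomorphism condition forces $\rk F_{\pm m}=1$. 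Feeding this into weight $2m-1$ forces $\rk F_{\pm(m-1)}=1$ together with the adjacent chain map being an isomorphism, and an induction down the weights forces every $F_k$ to be a line bundle with every chain map $F_k\to F_{k+1}\otimes K$ an isomorphism. Together with $F_k\cong F_{-k}^*$ this identifies the chain with $K^{(n-1)/2}\oplus\cdots\oplus K^{(1-n)/2}$, i.e.\ the principal chain obtained from the $(n-1)^{\rm st}$ symmetric power, which is exactly the image of $0$ in the Hitchin section.

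It remains to rule out \emph{strictly polystable} fixed points with $\varphi\neq0$, for which the clean criterion of Proposition~\ref{prop stable min} does not apply. Here I would proceed by hand, exhibiting for each such fixed point an explicit one-parameter family of $\sP\sSL(n,\R)$-Higgs bundles, obtained by switching on an off-diagonal deformation of the chain that is compatible with the orthogonal structure and lies in a positive weight space, along which the $\mathrm{L}^2$-norm $f$ strictly decreases; this shows the point is not a local minimum. Combined with the stable analysis, and with the fact that $\varphi=0$ gives precisely the global minima $\Mm(\sH)\subset\Mm(\sP\sSL(n,\R))$, this establishes that the only local minima are $\varphi=0$ and the image of $0$ in the Hitchin section.

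The main obstacle is the inductive bundle-theoretic analysis of the second paragraph: while the top-weight step is clean, at intermediate weights $\End(E)_j$ mixes many summands $\Hom(F_k,F_{k+j})$ that the orthogonal involution couples, so tracking ranks, degrees, and the induced maps as the induction proceeds is delicate and is where the representation theory of the principal $\fsl(2,\C)$ must be used carefully. The secondary difficulty is the strictly polystable case, where no uniform criterion is available and the destabilizing deformation must be written down explicitly and verified to lower $f$.
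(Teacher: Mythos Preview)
The paper does not actually give a proof of this theorem; it states it as Hitchin's result with a one-sentence sketch immediately before the statement: ``The proof idea is to first classify the stable local minima using Proposition~\ref{Prop G Higgs fixed point}, then construct explicit deformations of strictly polystable fixed points with nonzero Higgs field which decreases the value of~$f$.'' Your proposal follows precisely this outline --- invoke the $\Z$-grading of Proposition~\ref{Prop G Higgs fixed point}, apply the stable-minimum criterion of Proposition~\ref{prop stable min} to force the chain to be the principal one, and then handle strictly polystable points by explicit $f$-decreasing deformations --- so your approach is the same as the paper's, just with more of the bundle-theoretic induction spelled out than the paper chooses to include.
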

We thus have the following corollary.
\begin{Corollary}
\begin{gather*}|\pi_0(\Mm(\sP\sSL(n,\R))|=\begin{cases}
4g-3& n=2,\\
3&\text{$n$-odd,}\\
6&\text{$n>2$ and even}.
\end{cases}\end{gather*}
\end{Corollary}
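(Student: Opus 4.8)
The plan is to split into the two regimes $n=2$ and $n>2$, and in both cases to read off the component count from the properness of the Morse--Bott function $f$ in~\eqref{eq Morse-Bott functions} together with the nonabelian Hodge correspondence (Theorem~\ref{Thm: Nonabelian Hodge Correspondence}), which identifies $\pi_0(\Mm(\sP\sSL(n,\R)))$ with $\pi_0(\Xx(\Gamma,\sP\sSL(n,\R)))$.

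For $n=2$ I would not use the Morse theory of Theorem~\ref{thm min PSLn}, which is stated only for $n>2$ and which anyway fails to capture the richer minimum locus here. Instead I would invoke the analysis of $\sP\sSL(2,\R)\cong\sSO_0(1,2)$ from Section~\ref{section SO1q and Hitchin comp}: the integer invariant $d$ obeys the Milnor--Wood bound $|d|\leq 2g-2$~\cite{MilnorMWinequality}, each level set $\Xx^d$ is nonempty and connected in that range~\cite{TopologicalComponents}, and is empty otherwise. Counting the integers in $[2-2g,2g-2]$ then gives $|\pi_0(\Mm(\sP\sSL(2,\R)))|=4g-3$.

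For $n>2$ the engine is the inequality of Remark~\ref{remark inequality for components},
\begin{gather*}|\pi_0(\Mm(\sH))|\leq|\pi_0(\Mm(\sP\sSL(n,\R)))|\leq|\pi_0(\{\text{local minima of }f\})|,\end{gather*}
and my job is to pin down both ends. For the right-hand side I would feed in Theorem~\ref{thm min PSLn}: the only local minima are the locus $\varphi=0$, which is exactly $\Mm(\sH)$, together with the image of $0$ under each Hitchin section. Since $\sH$ is compact and semisimple, Theorem~\ref{thm tau injective compact} gives $\pi_0(\Mm(\sH))\cong\Bb_\sH(S)=H^2(S,\pi_1\sH)$, whose order is $2$ for $n$ odd and $4$ for $n$ even by~\eqref{eq top PSLn bundles}; by Remark~\ref{Remark 2 Hitchin comp PSL2n} the Hitchin section contributes one further minimum for $n$ odd and two for $n$ even. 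This yields the upper bounds $2+1=3$ and $4+2=6$.

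The hard part will be showing these upper bounds are attained, i.e., that distinct components of the minimum locus lie in distinct components of $\Mm(\sP\sSL(n,\R))$. The $|\Bb_\sH(S)|$ components of $\Mm(\sH)$ stay separated because $\tau$ is locally constant and restricts to a bijection on the compact part (Theorem~\ref{thm tau injective compact}). The subtle point is that $\tau$ cannot separate a Hitchin component from the compact ones --- indeed, since $\tau$ is already surjective via the compact locus, each Hitchin component shares its invariant with some $\Mm(\sH)$ component, which is precisely the failure of injectivity that motivates Question~\ref{question comp}. I would therefore instead use Hitchin's theorem that no representation in a Hitchin component deforms to a compact one to conclude that each Hitchin component is disjoint from $\Mm(\sH)$; that these are genuinely distinct connected components (one for $n$ odd, two for $n$ even) I would take directly from Remark~\ref{Remark 2 Hitchin comp PSL2n}. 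Assembling the $|\Bb_\sH(S)|$ compact components with the $1$ or $2$ Hitchin components produces $3$ (resp. $6$) distinct components, matching the upper bound and closing the count.
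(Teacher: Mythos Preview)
Your proposal is correct and follows essentially the same approach as the paper: split off $n=2$ via the $\sSO_0(1,2)$ analysis of Section~\ref{section SO1q and Hitchin comp}, and for $n>2$ combine the count $|\Bb_\sH(S)|$ from~\eqref{eq top PSLn bundles} with the one or two Hitchin components from Remark~\ref{Remark 2 Hitchin comp PSL2n}, invoking Theorem~\ref{thm min PSLn} to rule out further components. If anything you are more careful than the paper in justifying the lower bound, explicitly separating the Hitchin components from the compact ones via Hitchin's non-deformability theorem rather than leaving this implicit.
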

\begin{proof}
For $n=2$ the component count is the same as for $\Mm(\sSO_0(1,2))$. For $n>2$ and odd we have $\Bb_{\sP\sSL(n,\R)}=\Z_2$ and there is only one Hitchin component. This gives three components. For $n>2$ and $n$-even we have $\Bb_{\sP\sSL(n,\R)}$ is isomorphic to $\Z_2\times \Z_2$ or $\Z_4$ depending on the parity of $\frac{n}{2}$ (see \eqref{eq top PSLn bundles}). Moreover, there are two Hitchin components by Remark \ref{Remark 2 Hitchin comp PSL2n}, this gives six components. By Theorem \ref{thm min PSLn} there are no other components.
\end{proof}

For the character variety $\Xx(\Gamma,\sP\sSL(n,\R))$ we of course have the same count.
\begin{Corollary}
\begin{gather*}|\pi_0(\Xx(\Gamma,\sP\sSL(n,\R))|=\begin{cases}
4g-3& n=2,\\
3&\text{$n$-odd,}\\
6&\text{$n>2$ and even}.
\end{cases}\end{gather*}
\end{Corollary}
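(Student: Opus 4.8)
The plan is to transport the count from the Higgs bundle side to the representation side using the nonabelian Hodge correspondence, which is the whole point of developing the Higgs bundle machinery. First I would fix an arbitrary Riemann surface structure $X$ on $S$ and invoke Theorem~\ref{Thm: Nonabelian Hodge Correspondence}: it asserts that the moduli space $\Mm(\sP\sSL(n,\R))$ of $\sP\sSL(n,\R)$-Higgs bundles on $X$ is homeomorphic to the character variety $\Xx(\Gamma,\sP\sSL(n,\R))$. Since any homeomorphism restricts to a bijection between the sets of connected components, this immediately yields
\begin{gather*}
|\pi_0(\Xx(\Gamma,\sP\sSL(n,\R)))|=|\pi_0(\Mm(\sP\sSL(n,\R)))|.
\end{gather*}

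The second step is simply to quote the preceding corollary, which computed the right-hand side to be $4g-3$ when $n=2$, $3$ when $n$ is odd, and $6$ when $n>2$ is even. Substituting gives exactly the asserted count for $\Xx(\Gamma,\sP\sSL(n,\R))$, so nothing further needs to be proved.

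There is essentially no obstacle here, since all the genuine work—the classification of stable local minima of the Morse--Bott function in Theorem~\ref{thm min PSLn}, the reduction of strictly polystable minima, and the bookkeeping of topological types via $\Bb_{\sP\sSL(n,\R)}(S)$ in \eqref{eq top PSLn bundles}—has already been carried out on the Higgs bundle side in the previous corollary. The only point meriting a word of care is that Theorem~\ref{Thm: Nonabelian Hodge Correspondence} is phrased for a \emph{fixed} Riemann surface structure $X$, whereas $\Xx(\Gamma,\sP\sSL(n,\R))$ is a purely topological object independent of $X$; I would simply note that the homeomorphism exists for every such $X$ and that $\pi_0$ is a homeomorphism invariant, so the equality of component counts is unambiguous. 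Thus the corollary is an immediate consequence of the nonabelian Hodge correspondence together with the Higgs bundle computation.
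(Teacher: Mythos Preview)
Your proposal is correct and matches the paper's approach exactly: the paper simply states ``For the character variety $\Xx(\Gamma,\sP\sSL(n,\R))$ we of course have the same count'' before the corollary, with no further argument, relying implicitly on the nonabelian Hodge correspondence (Theorem~\ref{Thm: Nonabelian Hodge Correspondence}) to transfer the previous corollary's component count from $\Mm(\sP\sSL(n,\R))$ to $\Xx(\Gamma,\sP\sSL(n,\R))$. Your write-up makes this explicit, which is fine.
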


Theorem \ref{thm min PSLn} also gives a dichotomy for deformations of representations into $\sP\sSL(n,\R)$, namely for $n>2$ the components of $\Xx(\Gamma,\sP\sSL(n,\R))$ are either deformations spaces of compact representations or deformation spaces of special Fuchsian representations.
\begin{Corollary}
For each $n>2$ and each $\rho\in\Xx(\Gamma,\sP\sSL(n,\R))$, exactly one of the following holds
\begin{itemize}\itemsep=0pt
\item $\rho$ can be deformed to a compact representation
\begin{gather*}\Gamma\to\sP\sSO(n)\hookrightarrow \sP\sSL(n,\R),\end{gather*}
\item $\rho$ can be deformed to a representation \begin{gather*}\xymatrix{\Gamma\ar[r]_{\rho_{\rm Fuch}\ \ \ \ }&\sP\sSL(2,\R)\ar[r]_{\iota_{\rm pr}}&\sP\sSL(n,\R),}\end{gather*} where $\rho_{\rm Fuch}\in\Fuch(\Gamma)$ is a Fuchsian representation and $\iota_{\rm pr}$ is the principal embedding from~\eqref{eq principal embedding}.
\end{itemize}
\end{Corollary}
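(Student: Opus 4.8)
The plan is to transport the statement to the Higgs bundle side via the nonabelian Hodge correspondence (Theorem~\ref{Thm: Nonabelian Hodge Correspondence}) and then exploit the gradient flow of the Morse--Bott function $f$ from~\eqref{eq Morse-Bott functions}. Since the correspondence is a homeomorphism $\Xx(\Gamma,\sP\sSL(n,\R))\cong\Mm(\sP\sSL(n,\R))$, a representation can be continuously deformed to another precisely when the two associated Higgs bundles lie in the same connected component of $\Mm(\sP\sSL(n,\R))$. So it suffices to show that every component contains exactly one of the two distinguished types of minima, and that an arbitrary point of a component can be connected to such a minimum.

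First I would fix $\rho$, let $[\bar\p_P,\varphi]$ denote the corresponding polystable Higgs bundle, and let $\mathcal{C}\subset\Mm(\sP\sSL(n,\R))$ be the connected component containing it. Because $\mathcal{C}$ is closed and $f$ is proper, $f|_{\mathcal{C}}$ attains its minimum at some point $m\in\mathcal{C}$, and such a global minimum on a component is in particular a local minimum of $f$ on the whole moduli space. Now I would invoke the classification in Theorem~\ref{thm min PSLn}: for $n>2$ the only local minima are the points with $\varphi=0$ and the image of $0$ under the Hitchin section. Since $\rho$ and $m$ lie in the same component, $\rho$ deforms to the representation associated to $m$.

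Next I would identify the two kinds of minima with the two alternatives. A minimum with $\varphi=0$ lies in the subvariety $\Mm(\sP\sSO(n))\subset\Mm(\sP\sSL(n,\R))$ of Higgs bundles with vanishing Higgs field, hence corresponds to a representation factoring through the maximal compact $\sP\sSO(n)$, i.e.\ a compact representation. The image of $0$ under the Hitchin section is, by its construction together with Remark~\ref{remark SL2 version} and Example~\ref{ex SO(p,p+1) principal}, the Higgs bundle of the composition $\iota_{\rm pr}\circ\rho_{\rm Fuch}$ with $\rho_{\rm Fuch}\in\Fuch(\Gamma)$; thus it corresponds to a principal Fuchsian representation, giving the dichotomy of alternatives.

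Finally, to upgrade ``one of'' to ``exactly one,'' I would argue that the two alternatives are mutually exclusive. If $\rho$ could be deformed to both, then a compact representation and a principal Fuchsian representation would lie in a common connected component; but the image of $0$ under the Hitchin section lies in a Hitchin component $\Hit(\sP\sSL(n,\R))$ (Definition~\ref{Def Hit comp}), and by Hitchin's theorem no representation in a Hitchin component can be deformed to a compact representation. Equivalently, a component of $\Mm(\sP\sSL(n,\R))$ cannot simultaneously contain a $\varphi=0$ minimum and a Hitchin-section minimum, so the two cases are disjoint. The main obstacle here is really packaged into Theorem~\ref{thm min PSLn}, whose proof requires the delicate classification of stable and strictly polystable $\C^*$-fixed points together with explicit destabilizing deformations that strictly decrease $f$; granting that, the remaining work is the soft topological bookkeeping of connected components and the identification of the Hitchin-section minimum with a principal Fuchsian representation.
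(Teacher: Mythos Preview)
Your proposal is correct and follows exactly the route the paper intends: the paper presents this corollary without proof, simply remarking that Theorem~\ref{thm min PSLn} ``gives a dichotomy for deformations,'' so your argument---transport via nonabelian Hodge, use properness of $f$ to find a local minimum in each component, apply the classification of minima, and invoke Hitchin's theorem that Hitchin components contain no compact representations---is precisely the implicit reasoning being summarized. One small refinement worth noting: when $n$ is even there are two Hitchin components (Remark~\ref{Remark 2 Hitchin comp PSL2n}), so ``the image of $0$ under the Hitchin section'' really stands for one minimum in each of these, but this does not affect the dichotomy.
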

\section[$\sSO(p,q)$-Higgs bundles]{$\boldsymbol{\sSO(p,q)}$-Higgs bundles}\label{section so(p,q)}
We now apply the techniques of the previous section to understand the components of the $\sSO(p,q)$-character variety $\Xx(\Gamma,\sSO(p,q))$. In her thesis \cite{MartaThesis}, Aparicio-Arroyo discovered that the Higgs bundle moduli space $\Mm(\sSO(p,q))$ has stable local minima of the Morse--Bott function~\eqref{eq Morse-Bott functions} with nonzero Higgs field and which do not arise from the Hitchin section. This was done by classifying stable $\sSO(p,q)$-Higgs bundles which are fixed points of the $\C^*$-action and satisfied Proposition \ref{prop stable min}. Due to the potential singularities, these results are not strong enough to classify the components of the moduli space $\Mm(\sSO(p,q))$.

It should be noted that the methods outlined in the previous section are not the only approach to studying the question of components. For example, by examining spectral data on generic fibers of the Hitchin fibration for $\Mm(\sSO(p+q,\C))$, Schaposnik and Baraglia~\cite{DavidLauraCayleyLanglands} have given evidence for the existence of additional components of the moduli space $\Mm(\sSO(p,q))$.
Although these methods do not currently distinguish connected components, they provide an intriguing alternative perspective.

We start by recalling the classification of stable minima.
\begin{Remark}The case $\sSO(2,q)$ is rather special since $\sSO(2,q)$ is a group of Hermitian type. This special type of group has its own very interesting connected component results. Since we have not said much about this situation, we will only discuss the non-Hermitian case, that is, for $2<p\leq q$. For the case of $\sSO(2,q)$ we refer the reader to \cite{SOpqComponents,CollierTholozanToulisse}.
\end{Remark}

Recall the notation of $\Kk_p=K^{p-1}\oplus K^{p-3}\oplus\cdots\oplus K^{3-p}\oplus K^{1-p}$ from \eqref{eq Kk_p notation} and the map $\widehat\Psi(0,\dots,0)\colon \Kk_p\to\Kk_{p+1}\otimes K$ from \eqref{eq image Psi hat}. Denote by $\eta_0$ the following transpose
\begin{gather*}\eta_0=\widehat\Psi(0,\dots,0)^T\colon \ \Kk_p\to\Kk_{p-1}\otimes K.\end{gather*}

\begin{Theorem}[Aparicio-Arroyo \cite{MartaThesis}] \label{thm sopq stable min} Suppose $2<p\leq q$. If $[V,W,\eta]$ is a stable $\sSO(p,q)$-Higgs bundle, then $(V,W,\eta)$ defines a local minima of the Morse--Bott function \eqref{eq Morse-Bott functions} if and only if one of the following holds
\begin{enumerate}\itemsep=0pt
\item[$1)$] $\eta=0$,
\item[$2)$] there is a stable rank $q-p+1$ orthogonal bundle $W_0$ with determinant bundle $I=\det(W_0)$ such that
\begin{gather*}(V,W,\eta)=\left(\Kk_p\otimes I,(\Kk_{p-1}\otimes I)\oplus W_0, \mtrx{\eta_0\\0}\colon V\to W\otimes K\right),\end{gather*}
\item[$3)$] $q=p+1$ and there is a line bundle $M\in\Pic^d(X)$ with $d\in(0,p(2g-2)]$ and $\mu\in H^0\big(M^{-1}K^p\big)\setminus\{0\}$ so that
\begin{gather*}(V,W,\eta)=\left(\Kk_p,M\oplus \Kk_{p-1}\oplus M^{-1}, \mtrx{0\\\eta_0\\\eta_\mu}\colon V\to W\otimes K\right),\end{gather*}
where $\eta_\mu=\mtrx{0&\cdots&0&\mu}\colon \Kk_p\to M^{-1}K$.
\end{enumerate}
\end{Theorem}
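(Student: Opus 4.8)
The plan is to combine the local minimum criterion of Proposition~\ref{prop stable min} with the chain description of $\C^*$-fixed points supplied by Proposition~\ref{Prop G Higgs fixed point}. By the preceding results, a stable $\sSO(p,q)$-Higgs bundle $(V,W,\eta)$ that is a critical point of $f$ is automatically a $\C^*$-fixed point, and by Proposition~\ref{prop stable min} it is a local minimum exactly when
\begin{gather*}
\ad_\varphi\colon \Pp\big[\fh^\C_j\big]\to\Pp\big[\fm^\C_{j+1}\big]\otimes K
\end{gather*}
is an isomorphism for every $j>0$. So the whole problem reduces to deciding which symmetric holomorphic chains satisfy this positivity-of-weights condition.

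First I would fix the weight decomposition. Writing $V=\bigoplus_m V_m$ and $W=\bigoplus_m W_m$ for the underlying $\Z$-grading, the Higgs field restricts to bundle maps $\eta\colon V_m\to W_{m+1}\otimes K$ and $\eta^\dagger\colon W_m\to V_{m+1}\otimes K$, while the two orthogonal structures impose the self-duality $V_{-m}\cong V_m^*$ and $W_{-m}\cong W_m^*$. Since $\fh^\C=\fso(p,\C)\oplus\fso(q,\C)$ and $\fm^\C=\Hom(V,W)$, a short computation with $\varphi=\smtrx{0&\eta^\dagger\\\eta&0}$ gives
\begin{gather*}
\ad_\varphi(X_V,X_W)=\eta X_V-X_W\eta,
\end{gather*}
and I would decompose this into its graded pieces $\Pp\big[\fh^\C_j\big]\to\Pp\big[\fm^\C_{j+1}\big]\otimes K$, expressed through the $\eta$ connecting consecutive $V_m$ and $W_m$.

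The core of the argument is a propagation-from-the-top analysis of the isomorphism condition. At the highest positive weight the target $\fm^\C_{j+1}$ is forced to be small, so the requirement that $\ad_\varphi$ be an isomorphism compels $\eta$ to be fibrewise injective along each chain; iterating downward, rank counting forces every $\eta$-connected chain to have the shape of the principal chain, and comparing \emph{degrees} on the two sides of the isomorphism—remembering the $K$-twist—forces the line-bundle summands to be the successive powers of $K$ occurring in $\Kk_p$ and $\Kk_{p-1}$, up to a common twist. Tracking the pairing $V_{-m}\cong V_m^*$ then identifies this common twist with a genuine orthogonal bundle: the part of $W$ not reached by $\eta$ sits at weight zero and assembles into a stable orthogonal bundle $W_0$ with $I=\det(W_0)$, giving Case~(2), whereas the purely $\eta$-free situation is Case~(1). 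The exceptional Case~(3) appears only when $q=p+1$, where the weight-zero orthogonal block has rank two and may instead be opened into a line bundle $M$ at an extreme weight together with its dual $M^{-1}$, joined to the principal chain by the section $\mu$; stability together with nonvanishing of $\mu\in H^0\big(M^{-1}K^p\big)$ then pins down $\deg(M)\in(0,p(2g-2)]$.

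The main obstacle will be the combinatorial bookkeeping in this propagation step: one must verify that no chain longer than, or shaped differently from, the principal one can make $\ad_\varphi$ simultaneously an isomorphism in all positive weights, and in particular that the orthogonality constraints—which pair weights $\pm m$ and force self-duality at the extreme ends—leave no room for configurations beyond the three listed. Isolating precisely why the rank-two weight-zero block can be opened up only when $q=p+1$, and confirming that the resulting object is still stable and is a genuinely distinct minimum, is the delicate point that separates Case~(3) from Case~(2).
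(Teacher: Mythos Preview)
The paper does not actually give a proof of this theorem: it is stated as a result of Aparicio-Arroyo's thesis and simply recalled (``We start by recalling the classification of stable minima'') as input for the component count that follows. So there is no in-paper argument to compare your proposal against.

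That said, your plan is exactly the standard route and is what the cited thesis does: reduce via Proposition~\ref{prop stable min} to the requirement that $\ad_\varphi\colon\Pp[\fh^\C_j]\to\Pp[\fm^\C_{j+1}]\otimes K$ be an isomorphism for all $j>0$, use Proposition~\ref{Prop G Higgs fixed point} to write the fixed point as a self-dual holomorphic chain, and then propagate the isomorphism condition down from the top weight to force the principal shape and the powers of $K$. Your identification of $\fh^\C$, $\fm^\C$, and the formula $\ad_\varphi(X_V,X_W)=\eta X_V-X_W\eta$ under the identification $\fm^\C\cong\Hom(V,W)$ is correct and is the right starting point for the graded analysis.

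One point to sharpen: your description of Case~(3) as the rank-two weight-zero block being ``opened into'' a pair $M$, $M^{-1}$ at extreme weights is not quite the mechanism. Case~(2) with $q=p+1$ and Case~(3) are genuinely different chain types, not deformations of one another. In Case~(3) the bundle $W$ has a longer chain than in Case~(2): $M$ and $M^{-1}$ sit at the outermost $W$-weights (so the full chain has length $2p+1$ rather than $2p-1$ with a weight-zero orthogonal piece attached), and $\mu$ is the map linking $K^{1-p}\subset V$ to $M^{-1}$. The reason this only occurs for $q=p+1$ is a rank count on the chain, not a degeneration of $W_0$. Getting this distinction right is exactly the ``combinatorial bookkeeping'' you flag, and it is where most of the work in the actual proof lies.
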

\begin{Remark}Note that in case three of the above theorem when $\deg(M)= p(2g-2)$ the existence of a nonzero section of $M^{-1}K^p$ implies $M=K^p$. In this case, the minima is the minima in the $\sSO(p,p+1)$-Hitchin component defined by $\widehat\Psi(0,\dots,0)$ from~\eqref{eq image Psi hat}.
\end{Remark}

In \cite{SOpqComponents} all of the local minima are classified. The result basically says that the only minima not accounted for in Theorem~\ref{thm sopq stable min} arise from polystable Higgs bundles with zero Higgs field and from Higgs bundles similar to the second case of Theorem~\ref{thm sopq stable min}, but where the bundle~$W_0$ is allowed to be strictly polystable.

\begin{Theorem}\label{Thm local min SOpq}Assume $2<p\leq q$ and $(V,W,\eta)$ is a polystable $\sSO(p,q)$-Higgs bundle. Then $(V,W,\eta)$ defines a local minima of the Morse--Bott function~\eqref{eq Morse-Bott functions} if and only if
\begin{enumerate}\itemsep=0pt
\item[$1)$] $\eta=0$,
\item[$2)$] there is a polystable rank $q-p+1$ orthogonal bundle $W_0$ with determinant bundle $I=\det(W_0)$ such that
\begin{gather*}(V,W,\eta)=\left(\Kk_p\otimes I,(\Kk_{p-1}\otimes I)\oplus W_0, \mtrx{\eta_0\\0}\colon V\to W\otimes K\right),\end{gather*}
\item[$3)$] $q=p+1$ and there is a line bundle $M\in\Pic^d(X)$ with $d\in(0,p(2g-2)]$ and $\mu\in H^0\big(M^{-1}K^p\big)\setminus\{0\}$ so that
\begin{gather*}(V,W,\eta)=\left(\Kk_p,M\oplus \Kk_{p-1}\oplus M^{-1}, \mtrx{0\\\eta_0\\\eta_\mu}\colon V\to W\otimes K\right),\end{gather*}
where $\eta_\mu=\mtrx{0&\cdots&0&\mu}\colon \Kk_p\to M^{-1}K$.
\end{enumerate}
\end{Theorem}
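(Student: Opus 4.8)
The plan is to bootstrap from the stable classification of Theorem~\ref{thm sopq stable min}, so that essentially all of the work lies in the strictly polystable fixed points. Since a local minimum of $f$ is in particular a critical point, hence a $\C^*$-fixed point, I would begin with a polystable $\sSO(p,q)$-Higgs bundle $(V,W,\eta)$ fixed by the $\C^*$-action. By Proposition~\ref{Prop G Higgs fixed point} it carries a $\Z$-grading $\fso(p+q,\C)=\bigoplus_j\fh^\C_j\oplus\fm^\C_j$, a holomorphic reduction to $\sH^\C_0$, and a Higgs field $\eta$ of weight one. If $(V,W,\eta)$ is stable, Theorem~\ref{thm sopq stable min} already places it in one of the three listed cases, so the new content is entirely the strictly polystable situation.

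For a strictly polystable fixed point I would use the fact recalled in the proof of the theorem of Garc\'{\i}a-Prada and Oliveira: such an object admits a holomorphic reduction to a Levi factor $\sL$ of a parabolic subgroup of $\sSO(p,q)$ on which it is stable as an $\sL$-Higgs bundle. Concretely this is the isotypic decomposition of the polystable object into stable summands, and the orthogonal pairing $Q_V\oplus(-Q_W)$ forces these summands to occur in dual pairs together with a residual orthogonal block, so that $\sL$ has the shape $\prod_i\sGL(n_i,\C)\times\sSO(p',q')$. The structural point to exploit, already visible in case~$2$ of Theorem~\ref{thm sopq stable min}, is that the weight-one Higgs field need not couple to every summand, which is exactly what will allow a summand like $W_0$ to be an arbitrary polystable orthogonal bundle without disturbing the Morse picture.

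To decide which reductions give local minima I would pass to the weight-space form of the criterion, requiring that the positive-weight part $\bigoplus_{j>0}T^j_{[\bar\p_P,\varphi]}\Mm(\sSO(p,q))$ of the tangent space vanish. On each factor $\sGL(n_i,\C)$ the complex-group argument applies: there $\fh^\C\cong\fm^\C$, so a nonzero positive-weight Higgs field would produce a nontrivial positive-weight deformation. Hence at a strictly polystable minimum every such factor must carry vanishing Higgs field, and the orthogonal pairing collects these decoupled factors into a single polystable orthogonal summand $W_0\subset W$ with no Higgs field. The part genuinely coupled to $\eta$ is then pinned down by the stable analysis to be the principal $\sSO(p,p-1)$ configuration $\eta_0$ (tensored by the torsion bundle $I=\det W_0$), reassembling the whole object into case~$2$ with $W_0$ now merely polystable. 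The case $\eta=0$ is case~$1$, and the $q=p+1$ configurations of case~$3$ are inherited directly since their description coincides with the stable one.

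The hard part will be controlling the transverse deformations at a strictly polystable point, that is, the directions that smooth the Levi reduction into a nearby non-split Higgs bundle. In the stable setting Proposition~\ref{prop stable min} reduces everything to verifying that $\ad_\varphi\colon\Pp[\fh^\C_j]\to\Pp[\fm^\C_{j+1}]\otimes K$ is an isomorphism for $j>0$; at a strictly polystable point one must instead control the full positive-weight part of the deformation hypercohomology $\HH^1$, including the off-diagonal extension classes between the isotypic blocks. The crux is to show that no such smoothing direction lowers $f$, equivalently that this positive-weight $\HH^1$ still vanishes; I expect this to require a block-by-block computation of $\ad_\varphi$ built from the explicit grading of the principal $\fsl(2,\C)$ sitting inside the $\Kk_p$ and $\Kk_{p-1}$ summands, showing that any surviving positive-weight class would manufacture a destabilizing subobject and so contradict polystability.
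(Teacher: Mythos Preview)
The paper does not supply a proof of this theorem. It states the result, attributes it to \cite{SOpqComponents}, and offers only the one-sentence gloss that ``the only minima not accounted for in Theorem~\ref{thm sopq stable min} arise from polystable Higgs bundles with zero Higgs field and from Higgs bundles similar to the second case of Theorem~\ref{thm sopq stable min}, but where the bundle~$W_0$ is allowed to be strictly polystable.'' There is therefore no in-paper argument to compare your proposal against.

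Your outline is broadly consonant with that description and with the strategy of \cite{SOpqComponents}: take the stable classification as given, decompose a strictly polystable fixed point into stable summands, and argue that the summands decoupled from the Higgs field assemble into the polystable orthogonal piece~$W_0$. One point to tighten: your invocation of ``the complex-group argument'' on the $\sGL$ factors is not literally applicable, since the Levi factors arising here are real (of type $\sGL(n_i,\R)$, or isotropic $\sGL$-blocks paired with their duals inside the orthogonal bundle), and for these one does not have $\fh^\C\cong\fm^\C$. In the source paper the strictly polystable case is handled instead by enumerating the possible stable summands of an $\sSO(p,q)$-Higgs bundle and, for each configuration not on the list, exhibiting an explicit deformation that decreases~$f$; the off-diagonal extension classes you flag as ``the hard part'' are dealt with by this direct construction rather than by an abstract vanishing argument. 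Your identification of where the difficulty lies is correct, but the resolution is more hands-on than your sketch suggests.
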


To show that each of the above local minima types defines a connected component of moduli space $\Mm(\sSO(p,q))$, we define a map $\Theta_{p,q}$ from a parameter space into $\Mm(\sSO(p,q))$ so that
\begin{enumerate}\itemsep=0pt
 \item[1)] $\Theta_{p,q}$ is a homeomorphism onto its image,
 \item[2)] the image of $\Theta_{p,q}$ is open and closed,
 \item[3)] each component of the image of $\Theta_{p,q}$ contains exactly one connected component of the local minima with $\eta\neq0$ from Theorem~\ref{Thm local min SOpq}.
 \end{enumerate}

The connected components of the local minima of type 2 in the above theorem are determined by the number of components of polystable $\sO(q-p+1,\C)$-bundles, that is, the components of $\Mm(\sO(q-p+1))$. For $q>p+1$ the group $\sO(q-p+1)$ is simple thus, by Theorem~\ref{thm tau injective compact}
\begin{gather*}|\pi_0(\Mm(\sO(q-p+1))|=|\Bb_{\sO(q-p+1)}(S)|= 2^{2g+1}.\end{gather*}
When $q=p+1$, we have $\Mm(\sO(q-p+1))=\Mm(\sO(2))$ and the number of connected components is $2^{2g+1}-1$. Combining these with the $p(2g-2)$ components of local minima of type 3 in the above theorem gives
$2^{2g+1}-1+p(2g-2)$ connected components of local minima in $\Mm(\sSO(p,p+1))$ with $\eta\neq0$.
Finally, when $q=p$ we have $q-p+1=1$ and thus there are $2^{2g+1}$ components of local minima with $\eta\neq0$. In this case all such minima define Hitchin components.

Combined with the $2^{2g+1}$-components of $\Mm(\sS(\sO(p)\times \sO(q))$, for $2<p\leq q$ the following theorem of~\cite{SOpqComponents} establishes the component count of~$\Mm(\sSO(p,q))$.

\begin{Theorem}\label{component sopq}For $2<p\leq q$, we have
\begin{gather*}|\pi_0(\Mm(\sSO(p,q)))|=2^{2g+1}+\begin{cases}
2^{2g+1}&q=p,\\2^{2g+1}-1+p(2g-2)&q=p+1,\\2^{2g+1}&\text{else}.
\end{cases}\end{gather*}
\end{Theorem}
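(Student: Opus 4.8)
The plan is to bracket $|\pi_0(\Mm(\sSO(p,q)))|$ between a Morse-theoretic upper bound and the lower bound of Remark~\ref{remark inequality for components}, and then pin down the exact value by realizing the local minima as distinct connected components. Since the function $f$ from~\eqref{eq Morse-Bott functions} is proper, its restriction to each connected component of $\Mm(\sSO(p,q))$ attains a minimum that is a local minimum of $f$; combined with Remark~\ref{remark inequality for components} this gives
\begin{gather*}
|\pi_0(\Mm(\sH))|\leq|\pi_0(\Mm(\sSO(p,q)))|\leq|\pi_0(\{\text{local minima of }f\})|.
\end{gather*}
The proof thus reduces to counting the connected components of the local minimum locus, which is completely described by Theorem~\ref{Thm local min SOpq}, and to showing that the upper bound is attained.

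First I would count the minima, separating $\eta=0$ from $\eta\neq0$. The locus $\eta=0$ is exactly the moduli space $\Mm(\sH)$ of the maximal compact $\sH=\sS(\sO(p)\times\sO(q))$, contributing $2^{2g+1}$ components. For the $\eta\neq0$ minima I would treat the families of Theorem~\ref{Thm local min SOpq} in turn. The type~$2$ minima are parameterized by a polystable rank $q-p+1$ orthogonal bundle $W_0$; when $q>p+1$ the group $\sO(q-p+1)$ is simple and Theorem~\ref{thm tau injective compact} gives $2^{2g+1}$ components, while when $q=p+1$ the relevant moduli is $\Mm(\sO(2))$ with $2^{2g+1}-1$ components. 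When $q=p$ the orthogonal bundle has rank one and one finds $2^{2g+1}$ components, all of Hitchin type. Finally, the type~$3$ minima occur only for $q=p+1$ and are indexed by the degree $d$ of the line bundle $M$, which runs over $(0,p(2g-2)]$, contributing $p(2g-2)$ further components.

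The heart of the argument, and the step I expect to be the main obstacle, is to show that this upper bound is sharp, so that each connected family of local minima lies in its own connected component of $\Mm(\sSO(p,q))$. A priori the inequality could be strict, since distinct minimum families might be joined through Higgs bundles with nonminimal $\eta$. To rule this out I would construct, for the $\eta\neq0$ minima, an explicit parameterizing map $\Theta_{p,q}$ into $\Mm(\sSO(p,q))$ --- obtained by twisting the normal forms of Theorem~\ref{Thm local min SOpq} by holomorphic sections in the relevant bundles, in the spirit of the Hitchin-section map $\Psi$ --- and verify that $\Theta_{p,q}$ is a homeomorphism onto its image, that the image is open and closed, and that each component of the image meets the $\eta\neq0$ minima in exactly one connected family. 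Openness I would derive from invariance of domain after a dimension match, exactly as for the Hitchin section; closedness I would deduce from properness of the Hitchin fibration, by checking that a divergent sequence of parameters forces the invariant polynomials, and hence the images, to diverge. Arranging these analytic properties uniformly over the type~$2$ and type~$3$ families is the delicate point.

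Granting the properties of $\Theta_{p,q}$, its image is a union of connected components of $\Mm(\sSO(p,q))$ carrying precisely the $\eta\neq0$ minima, one family per component, while the complementary open-closed set contains the global minima $\Mm(\sH)$ and no others. Applying the upper bound to this complement forces its component count to equal $|\pi_0(\Mm(\sH))|=2^{2g+1}$, so the total is additive: the $2^{2g+1}$ components from $\Mm(\sH)$ together with the number of $\eta\neq0$ families counted above. Summing yields $2^{2g+1}+2^{2g+1}$ when $q=p$ or $q>p+1$, and $2^{2g+1}+\big(2^{2g+1}-1+p(2g-2)\big)$ when $q=p+1$, which is exactly the stated formula.
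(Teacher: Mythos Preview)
Your strategy matches the paper's: bound $|\pi_0|$ above by the count of local-minimum families via Theorem~\ref{Thm local min SOpq}, then realize each $\eta\neq0$ family as an open-closed piece via a parameterizing map $\Theta_{p,q}$ (this is exactly Theorem~\ref{thm calyey com}, whose domain the paper packages as $\Mm_{K^p}(\sSO(1,q-p+1))\times\bigoplus H^0(K^{2j})$). The well-definedness, injectivity, and closedness steps are as you describe.

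The gap is in your openness argument. Invariance of domain worked for the Hitchin section because its image lies entirely in the \emph{stable} locus, where $\Mm(\sSO(p,p+1))$ is a manifold. The image of $\Theta_{p,q}$, however, contains Higgs bundles in which the orthogonal bundle $W_0$ is strictly polystable; these are singular points of both the domain $\Mm_{K^p}(\sSO(1,q-p+1))$ and the target $\Mm(\sSO(p,q))$, so neither is locally Euclidean there and invariance of domain does not apply. The paper flags this explicitly: openness ``is the most difficult and technical step'' and, after the dimension match, requires one to ``analyze the local structure of the singularities of the image of $\Theta_{p,q}$''. Concretely, in \cite{SOpqComponents} this is done by comparing the Kuranishi/deformation-complex models at corresponding singular points and showing that $\Theta_{p,q}$ induces a local homeomorphism of the singular germs, not merely of the smooth strata. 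Your sketch would need to supply this local singularity analysis.

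A minor counting slip in the $q=p$ case: rank-one orthogonal bundles give only $|H^1(X,\Z_2)|=2^{2g}$ isomorphism classes, not $2^{2g+1}$. The extra factor of $2$ comes from the outer symmetry of $\sSO(p,p)$ that swaps $V$ and $W$, producing a second copy of the Hitchin-type components (see the footnote accompanying the character-variety corollary).
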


To sketch the idea of the proof of the above theorem, we need to slightly generalize our notion of Higgs bundles.
\begin{Definition}A $K^p$-twisted $\sG$-Higgs bundle is a pair $(\Pp,\varphi)$ where\samepage
\begin{itemize}\itemsep=0pt
\item $\Pp\to X$ is a holomorphic $\sH^\C$-bundle,
\item $\varphi$ is a holomorphic section of the associated bundle $\Pp\big[\fm^\C\big]\otimes K^p$.
\end{itemize}
\end{Definition}

The notions of stability for $K^p$-twisted Higgs bundles are defined similarly to the notions of stability for regular Higgs bundles, i.e., for $K^1$-twisted Higgs bundles. We will denote the space of polystable $K^p$-twisted $\sG$-Higgs bundles and the resulting moduli space by
\begin{gather*} \Hh^{\rm ps}_{K^p}(\sG)\qquad \text{and}\qquad \Mm_{K^p}(\sG)=\Hh^{\rm ps}_{K^p}(\sG)/\Gg_{\sH^\C}. \end{gather*}

Recall from Section \ref{section SO1q and Hitchin comp} that an $\sSO(1,n)$-Higgs bundle is given by a triple $(V,W,\eta)=(\Lambda^nW_0$, $W_0,\eta)$, where $\eta\in H^0(\Lambda^qW_0\otimes W_0\otimes K)$. The map $\eta$ can be interpreted as a holomorphic bundle map $\eta\colon \Lambda^qW_0\to W_0\otimes K$. Similarly, a $K^p$-twisted $\sSO(1,n)$-Higgs bundle is given by a triple $(\Lambda^nW_0,W_0,\eta_p)$ where $\eta_p\in H^0(\Lambda^qW_0\otimes W_0\otimes K^p)$, which we may interpret at as holomorphic bundle map $\eta_p\colon K^{1-p}\to W_0\otimes K$.

Recall the definition of the map $\widehat\Psi\colon \bigoplus\limits_{j=1}^p H^0\big(K^{2j}\big)\to \Hh(\sSO(p,p+1))$ from~\eqref{eq image Psi hat}. Taking an transpose of this map defines the $\sSO(p+1,p)$-Hitchin component. For our applications we need the map for defining the $\sSO(p,p-1)$-Hitchin component. We call this map $\widehat\Psi\colon \bigoplus\limits_{j=1}^{p-1} H^0\big(K^{2j}\big)\to \Hh(\sSO(p,p-1))$ as well, explicitly it is given by
\begin{gather*}\widehat\Psi(q_2,\dots,q_{2p-2})=\left(\Kk_{p},Q_p,\Kk_{p-1},Q_{p-1},\mtrx{1&q_2&\dots&q_{2p-2}\\&\ddots&\ddots&\vdots\\&&1&q_2}\colon \Kk_{p}\to \Kk_{p-1}\otimes K\right).\end{gather*}

Consider the map
\begin{gather}
\label{eq Thetapq}\widehat\Theta_{p,q}\colon \ \Hh^{\rm ps}_{K^p}(\sSO(1,q-p+q))\times \bigoplus\limits_{j=1}^{p-1}H^0\big(K^{2j}\big)\longrightarrow \Hh(\sSO(p,q))
\end{gather}
defined by
\begin{gather*}\widehat\Theta_{p,q}(W_0,\eta_p,q_2,\dots,q_{2p-2})=\big(I\otimes\Kk_p,(I\otimes \Kk_{p-1})\oplus W_0,\big(\begin{matrix} \widehat\Psi(q_2,\dots,q_{2p-2})&\hat\eta_p\end{matrix}\big)\big),\end{gather*}
where $I=\Lambda^{q-p+1}W_0$ and
\begin{gather*}\hat\eta_p=\mtrx{0&\cdots&0&\eta_p}\colon \ I\otimes\big(K^{p-1}\oplus K^{p-3}\oplus\cdots\oplus K^{3-p}\oplus K^{1-p}\big)\to W_0\otimes K.\end{gather*}
\begin{Remark}Note that $\Lambda^p(I\otimes \Kk_{p})=I^p$ and $\Lambda^{q}\big(\big(I\otimes K^{p-1}\big)\oplus W_0\big)=I^p$, so this indeed defines an $\sSO(p,q)$-Higgs bundle.
\end{Remark}
\begin{Remark}\label{Rem 0 Kp higgs field}Note also that for $W_0$ a polystable orthogonal bundle of rank $q-p+1$, we can take $\eta_p=0$. In this case the image of $\widehat\Theta_{p,q}(W_0,0,0,\dots,0)$ is given by
\begin{gather*}\big(\Lambda^{q-p+1}(W_0)\otimes\Kk_p,\big(\Lambda^{q-p+1}(W_0)\otimes \Kk_{p-1}\big)\oplus W_0,\big(\begin{matrix}\widehat\Psi(q_2,\dots,q_{2p-2})&0\end{matrix}\big)\big).\end{gather*}
In particular, the $\sSO(p,q)$-Higgs bundle is a direct sum of an $\sSO(p,p-1)$-Higgs bundle in the Hitchin component (twisted by an $\sO(1,\C)$-bundle) with a polystable $\sO(q-p+1)$-Higgs bundle.
\end{Remark}
\begin{Theorem}[\cite{SOpqComponents}]\label{thm calyey com}
For $2<p\leq q$, the map $\widehat\Theta_{p,q}$ from \eqref{eq Thetapq} induces a map
\begin{gather*}\Theta_{p,q}\colon \ \Mm_{K^p}(\sSO(1,q-p+1))\times\bigoplus\limits_{j=1}^{p-1}H^0\big(K^{2j}\big)\longrightarrow \Mm(\sSO(p,q)),\end{gather*}
which is a homeomorphism onto its image. Moreover, the image of $\Theta_{p,q}$ is open and closed.
\end{Theorem}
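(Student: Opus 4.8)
The plan is to establish, in order, that $\widehat\Theta_{p,q}$ descends to a continuous map $\Theta_{p,q}$ on moduli, that $\Theta_{p,q}$ is injective with continuous inverse, and that its image is closed and open; the two assertions of the theorem then follow. The first step is to check that $\widehat\Theta_{p,q}$ produces polystable $\sSO(p,q)$-Higgs bundles and is gauge equivariant. Polystability is transparent when $\eta_p=0$ and $q_2=\dots=q_{2p-2}=0$: by Remark~\ref{Rem 0 Kp higgs field} the output is the direct sum of a (twisted) $\sSO(p,p-1)$-Higgs bundle in the Hitchin component, which is stable, with the polystable orthogonal bundle $W_0$, and a direct sum of polystable objects is polystable. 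For general input I would verify the stability inequality directly from the definition: the nonzero sub-diagonal entries of $\widehat\Psi(q_2,\dots,q_{2p-2})$ force any $\Phi$-invariant isotropic subbundle to restrict nontrivially onto the $\Kk_p$-summands, reducing the check to the polystability of $(W_0,\eta_p)$ as a $K^p$-twisted $\sSO(1,q-p+1)$-Higgs bundle. Equivariance and continuity are then formal, as $\widehat\Theta_{p,q}$ is holomorphic in its arguments.

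The heart of the argument is injectivity, for which I would construct an explicit inverse on the image. Given an $\sSO(p,q)$-Higgs bundle of the shape produced by $\widehat\Theta_{p,q}$, the rank-$p$ bundle $V=I\otimes\Kk_p$ carries the canonical grading by powers of $K$, and applying the basis of invariant polynomials from Proposition~\ref{prop hom basis} to the $\sSO(p,p-1)$-block of the Higgs field recovers $q_2,\dots,q_{2p-2}$; the summand $W_0$ and the section $\eta_p$ are then read off as the complementary canonical piece of $W$ and the corresponding block of the Higgs field. This reconstruction is holomorphic, hence continuous, so it shows both that $\Theta_{p,q}$ is injective and that it is a homeomorphism onto its image.

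For closedness I would imitate the proof that the Hitchin section has closed image: the differentials $q_{2j}$ are detected by the $\sSO(p+q,\C)$-Hitchin fibration, which is proper by~\cite{NitsurePairs}, while the pair $(W_0,\eta_p)$ is controlled by the analogous proper invariants of $\Mm_{K^p}(\sSO(1,q-p+1))$; combining these, a divergent sequence in the domain maps to a divergent sequence in $\Mm(\sSO(p,q))$, so the image is closed. For openness I would use invariance of domain, having first verified by Riemann--Roch that $\dim_\R\big(\Mm_{K^p}(\sSO(1,q-p+1))\times\bigoplus_{j=1}^{p-1}H^0(K^{2j})\big)$ equals $\dim_\R\sSO(p,q)\cdot(2g-2)=\dim_\R\Mm(\sSO(p,q))$; this equality is in fact forced, since an open and closed subset must be full-dimensional.

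The genuinely delicate point, and the one I expect to be the main obstacle, is openness at the strictly polystable minima, namely the type-$2$ points of Theorem~\ref{Thm local min SOpq} with $W_0$ strictly polystable, where $\Mm(\sSO(p,q))$ is singular and invariance of domain does not directly apply. There I would argue locally using the weight decomposition of the deformation complex at the $\C^*$-fixed point together with the local-minimum criterion of Proposition~\ref{prop stable min}: showing that the positive-weight part of the tangent space is exhausted by deformations lying in the image proves that $\widehat\Theta_{p,q}$ surjects onto a full neighborhood. I expect this local surjectivity at the singular minima, combined with the Riemann--Roch dimension count, to be the principal technical work of the proof.
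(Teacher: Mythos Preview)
Your proposal is correct and follows essentially the same four-step architecture the paper sketches: well-definedness via polystability, injectivity via matching gauge transformations (your explicit inverse is the same content), closedness via properness of the Hitchin fibration, and openness via the dimension count plus a local analysis at the singular (strictly polystable) points. You have also correctly identified the genuinely hard step---openness at the singular minima---which the paper likewise flags as ``the most difficult and technical step'' requiring analysis of the local structure of the singularities of the image.
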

The proof has four steps.
\begin{enumerate}\itemsep=0pt
\item Well defined: show the $\sSO(p,q)$-Higgs bundles in the image of $\Theta_{p,q}$ are polystable.
\item Injectivity: Every $\sS(\sO(1,\C)\times \sO(q-p+1,\C))$ gauge transformation of a $K^p$-twisted $\sSO(1,q-p+1)$-Higgs bundle induces a unique $\sS(\sO(p,\C)\times \sO(q,\C))$ gauge transformation preserving the image of $\widehat\Theta_{p,q}$.
\item Openness of image: This is the most difficult and technical step. We first note that \begin{gather*}\dim\left(\Mm_{K^p}(\sSO(1,q-p+1))\times\bigoplus\limits_{j=1}^{p-1}H^0\big(K^{2j}\big)\right)=\dim(\Mm(\sSO(p,q)),\end{gather*} then analyze the local structure of the singularities of the image of $\Theta_{p,q}$.
\item Closedness of the image: Use properness of the Hitchin fibration, this is analogous to the proof of closedness of the Hitchin section.
\end{enumerate}

To see that the component count of Theorem \ref{component sopq} is a corollary of Theorems~\ref{Thm local min SOpq} and~\ref{thm calyey com} we prove the following proposition.
\begin{Proposition}
 For all $p>1$ the component count of $\Mm_{K^p}(\sSO(1,n))$ is given by
 \begin{gather*}\pi_0(\Mm_{K^p}(\sSO(1,n)))=\begin{cases}
 2^{2g}&n=1,\\2^{2g+1}-1+p(2g-2)&n=2,\\2^{2g+1}&n>2.
 \end{cases}\end{gather*}
 \end{Proposition}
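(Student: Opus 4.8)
The plan is to run the Morse-theoretic argument of Section~\ref{section some comp results}, now for the $K^p$-twisted moduli space $\Mm_{K^p}(\sSO(1,n))$. The $\mathrm{L}^2$-norm of the Higgs field gives a proper function $f\colon \Mm_{K^p}(\sSO(1,n))\to\R$ (properness follows, as in the untwisted case, from properness of the $K^p$-twisted Hitchin fibration) whose critical points are the $\C^*$-fixed points, so that $|\pi_0(\Mm_{K^p}(\sSO(1,n)))|\leq|\pi_0(\{\text{local minima of }f\})|$. The whole count therefore reduces to classifying the local minima and then showing that each connected family of minima lies in its own component, turning this inequality into an equality.

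For the minima with $\varphi=0$: an $\sSO(1,n)$-Higgs bundle $(\Lambda^nW_0,W_0,\eta_p)$ with $\eta_p=0$ is just a polystable $\sO(n,\C)$-bundle $W_0$ (the line bundle $V=\Lambda^nW_0$ being determined), so this locus is $\Mm(\sO(n))$. I would count its components by the Stiefel--Whitney data $(sw_1,sw_2)\in H^1(X,\Z_2)\times H^2(X,\Z_2)$. For $n\geq3$ every pair is realized and, by Theorem~\ref{thm tau injective compact}, each gives exactly one component, yielding $2^{2g+1}$; for $n=1$ only $sw_1\in\Z_2^{2g}$ occurs, giving $2^{2g}$; and for $n=2$ the pair $(0,1)$ is \emph{not} realized, because $sw_1=0$ forces $W_0\cong L\oplus L^{-1}$ and polystability of the bundle forces $\deg L=0$, hence $sw_2=0$, leaving exactly $2^{2g+1}-1$ components.

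For the minima with $\varphi\neq0$ I would appeal to the fixed-point description (Proposition~\ref{Prop G Higgs fixed point}) and the local-minimum criterion (Proposition~\ref{prop stable min}): such a point carries a grading $\fso(n+1,\C)=\bigoplus_j(\fh_j^\C\oplus\fm_j^\C)$ with $\varphi\in H^0(\fm_1^\C\otimes K^p)$ for which $\ad_\varphi\colon \fh_j^\C\to\fm_{j+1}^\C\otimes K^p$ is an isomorphism whenever $j>0$. Because $V$ has rank one, $\varphi$ factors through a single line subbundle of $W_0$, so $\ad_\varphi$ has rank too small to surject onto $\fm_{j+1}^\C$ for $j\geq1$ unless the grading of $W_0$ is confined to the weights $\{-1,0,1\}$; for $n\geq3$ this is incompatible with $\varphi\neq0$ and stability, so no such minima exist, while for $n=1$ the Higgs field ranges over the connected space $H^0(K^p)$ and produces no new component. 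The remaining case $n=2$ is precisely the $K^p$-twisted analogue of Theorem~\ref{thm so12 comp}: one is left with $W_0\cong M\oplus M^{-1}$ and $\varphi=\mu\in H^0(M^{-1}K^p)\setminus\{0\}$, where the existence of $\mu\neq0$ together with stability bounds $d=\deg M$ to $1\leq d\leq p(2g-2)$, and for each such $d$ the fixed-point locus is connected. This produces precisely $p(2g-2)$ further components.

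Finally I would upgrade the Morse inequality to an equality: the invariants $(sw_1,sw_2)$, refined by $d$ when $n=2$, are locally constant, hence constant on components, and the analysis above shows the minima over each fixed value of these invariants form a single connected set; since by properness every component contains minima, distinct invariants label distinct components and $|\pi_0(\Mm_{K^p}(\sSO(1,n)))|=|\pi_0(\{\text{minima}\})|$. Adding the two contributions gives $2^{2g}$ for $n=1$, $(2^{2g+1}-1)+p(2g-2)$ for $n=2$, and $2^{2g+1}$ for $n>2$, as claimed. The main obstacle is the minima classification itself---in particular ruling out $\varphi\neq0$ minima for $n\geq3$ and handling strictly polystable minima, where one must pass to a Levi factor---together with the openness and closedness needed to convert the Morse inequality into an equality; this is the technical heart, carried out in~\cite{MartaThesis,SOpqComponents}.
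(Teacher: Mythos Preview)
Your proposal is correct and follows essentially the same route as the paper. The paper's own ``proof'' is only the brief Remark~\ref{rem comp K^p twisted}: for $n=1$ and $n>2$ one shows by direct computation that every $\C^*$-fixed point can be deformed to one with zero Higgs field, while for $n=2$ the extra $p(2g-2)$ components arise exactly as in the $K^p$-twisted analogue of Theorem~\ref{thm so12 comp}. Your sketch is actually more detailed than what the paper provides; the only methodological difference is that where the paper alludes to an explicit deformation of fixed points, you instead invoke the local-minimum criterion (Proposition~\ref{prop stable min}) to rule out $\varphi\neq0$ minima for $n\geq3$ --- these lead to the same conclusion.
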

\begin{Remark}\label{rem comp K^p twisted}
The proof of the $n=1$ and $n>2$ are by direct computation, namely we show that every fixed point of the $\C^*$-action can be deformed to on with zero Higgs field. The additional $p(2g-2)$ components in the $n=2$ case are analogous to the components in Theorem~\ref{thm so12 comp}. In particular, the Higgs field in these components is \emph{never zero} and these components are parameterized by certain vector bundles over an appropriate symmetric product of the surface.
\end{Remark}
As a direct corollary of the component count for $\Mm(\sSO(p,q))$ we have the following component count of the character variety $\Xx(\Gamma,\sSO(p,q))$.
\begin{Corollary}
For $2<p\leq q$, the component count of the character variety $\Xx(\Gamma,\sSO(p,q))$ is given by\footnote{When $p=q$, the extra $2^{2g}$ components arise from switching $V$ and $W$ in the image of $\Theta_{p,p}$.}
\begin{gather*}|\pi_0(\Xx(\Gamma,\sSO(p,q)))|=2^{2g+1}+\begin{cases}
2^{2g+1}&q=p,\\2^{2g+1}-1+p(2g-2)&q=p+1,\\2^{2g+1}&\text{else}.
\end{cases}\end{gather*}
\end{Corollary}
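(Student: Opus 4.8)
The plan is to deduce the Corollary from the component count of the Higgs bundle moduli space recorded in Theorem~\ref{component sopq} by transporting it across the nonabelian Hodge correspondence. First I would fix an arbitrary Riemann surface structure $X$ on the closed surface $S$. By Theorem~\ref{Thm: Nonabelian Hodge Correspondence}, the moduli space $\Mm(\sSO(p,q))$ of $\sSO(p,q)$-Higgs bundles on $X$ is homeomorphic to the character variety $\Xx(\Gamma,\sSO(p,q))$, where $\Gamma=\pi_1(S)$. Since a homeomorphism induces a bijection of connected components, this yields
\begin{gather*}|\pi_0(\Xx(\Gamma,\sSO(p,q)))|=|\pi_0(\Mm(\sSO(p,q)))|.\end{gather*}
Substituting the value of the right-hand side provided by Theorem~\ref{component sopq} gives the asserted formula verbatim for every pair $2<p\le q$. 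In this sense the Corollary is immediate: all of the analytic and algebro-geometric content (the classification of local minima in Theorem~\ref{Thm local min SOpq}, the Cayley-type parameterization $\Theta_{p,q}$ of Theorem~\ref{thm calyey com}, and the Morse--Bott and properness arguments) has already been packaged into Theorem~\ref{component sopq}.

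The one point deserving explanation is the footnote, namely how the full count is assembled in the balanced case $q=p$. Here I would note that the map $\Theta_{p,p}$ of Theorem~\ref{thm calyey com} has domain $\Mm_{K^p}(\sSO(1,1))\times\bigoplus_{j=1}^{p-1}H^0(K^{2j})$; the vector-space factor is contractible and $\Mm_{K^p}(\sSO(1,1))$ has $2^{2g}$ components, so the open and closed image of $\Theta_{p,p}$ accounts for only $2^{2g}$ of the components carrying nonzero Higgs field, whereas the total number of such components is $2^{2g+1}$. The remaining $2^{2g}$ are obtained by applying the outer involution of $\sSO(p,p)$ that exchanges the two rank-$p$ orthogonal summands of the maximal compact, i.e., the involution sending $(V,W,\eta)$ to $(W,V,\eta^\dagger)$. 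I would verify that this involution carries the image of $\Theta_{p,p}$ to a disjoint open and closed family of $2^{2g}$ further components: for a Higgs bundle in the image of $\Theta_{p,p}$ the factor $V=\Kk_p\otimes I$ contains a line subbundle of degree $(p-1)(2g-2)$, while every summand of $W=(\Kk_{p-1}\otimes I)\oplus W_0$ has strictly smaller degree, so no gauge transformation of the $\sH^\C$-structure interchanges $V$ and $W$ and the two families are disjoint. Together with the $2^{2g+1}$ components on which $\eta=0$, this produces $2^{2g+1}+2^{2g}+2^{2g}=2^{2g+2}$, matching the $q=p$ entry.

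The only real obstacle is this disjointness bookkeeping in the case $q=p$. For $q>p$ no such subtlety occurs, since then $V$ and $W$ have distinct ranks, the $V\leftrightarrow W$ exchange is not an automorphism of $\sSO(p,q)$, and the image of $\Theta_{p,q}$ already exhausts the nonzero local minima. Hence, beyond transporting Theorem~\ref{component sopq} through the homeomorphism of Theorem~\ref{Thm: Nonabelian Hodge Correspondence}, the proof of the Corollary is complete, the footnote merely recording the geometric origin of the duplicated $2^{2g}$ components in the balanced case.
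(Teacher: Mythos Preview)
Your proposal is correct and matches the paper's approach exactly: the paper simply states this as a ``direct corollary of the component count for $\Mm(\sSO(p,q))$'' via the nonabelian Hodge correspondence, with no further argument. Your additional justification of the footnote in the $q=p$ case (the $V\leftrightarrow W$ outer involution and the degree comparison showing the two families are disjoint) is more detailed than anything the paper provides, but it is sound and consistent with what the footnote asserts.
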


Combining Remarks \ref{Rem 0 Kp higgs field} and \ref{rem comp K^p twisted} it follows that if $2<p<q-1$, then every Higgs bundle in the image of $\Theta_{p,q}$ can be deformed to the direct sum of a polystable orthogonal bundle $W_0$ and a Higgs bundle in the $\sSO(p,p-1)$-Hitchin component twisted by the determinant of $W_0$. Applying the nonabelian Hodge correspondence to this statement gives a dichotomy for the character variety $\Xx(\Gamma,\sSO(p,q))$ when $q>p+1$ which is analogous to Corollary~\ref{cor dichotomy hitchin}.
\begin{Theorem}\label{thm dichotomy pq}
Suppose $2<p<q-1$. If $\rho\in\Xx(\Gamma,\sSO(p,q))$, then there is a dichotomy: either~$\rho$ can be deformed to compact representation or $\rho$ can be deformed to a Fuchsian representation of the form
\begin{gather}\label{eq pos fuch}
(\iota_{p,q}\circ\iota_{\rm pr}\circ\rho_{\rm Fuch})\otimes \det(\alpha)\oplus\alpha,
\end{gather}
where
\begin{itemize}\itemsep=0pt
\item $\rho_{\rm Fuch}\colon \Gamma\to\sP\sSL(2,\R)$ is a Fuchsian representation,
\item $\iota_{\rm pr}\colon \sP\sSL(2,\R)\to\sSO(p,p-1)$ is the principal embedding,
\item $\iota_{p,q}\colon \sSO(p,p-1)\to\sSO(p,q)$ is the embedding given by \eqref{eq pq embedd},
\item $\alpha\colon \Gamma\to\sO(q-p+1)$ is a compact representation.
\end{itemize}
In particular, every component of $\Xx(\Gamma,\sSO(p,q))$ is either the deformation space of compact representations or the deformation space of certain Fuchsian representations.
\end{Theorem}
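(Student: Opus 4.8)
The plan is to establish the dichotomy on the Higgs bundle side, combining the Morse-theoretic classification of minima with the structure of $\Theta_{p,q}$, and then transport it to $\Xx(\Gamma,\sSO(p,q))$ through the nonabelian Hodge correspondence (Theorem~\ref{Thm: Nonabelian Hodge Correspondence}). First I would use that $f$ from~\eqref{eq Morse-Bott functions} is proper and bounded below, so that every connected component of $\Mm(\sSO(p,q))$ contains a local minimum of $f$ to which every point of that component can be deformed. By Theorem~\ref{Thm local min SOpq}, and since the hypothesis $q>p+1$ excludes the type~3 minima (which occur only when $q=p+1$), such a minimum is of type~1 ($\eta=0$) or type~2. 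Hence every component of $\Mm(\sSO(p,q))$ is either a component of the zero-Higgs-field locus $\Mm(\sH)$, where $\sH=\sS(\sO(p)\times\sO(q))$ is the maximal compact, or a component of the open and closed image of $\Theta_{p,q}$ from Theorem~\ref{thm calyey com}; these two families of components exhaust $\pi_0(\Mm(\sSO(p,q)))$, as also confirmed by the count in Theorem~\ref{component sopq}.

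On a component inside $\Mm(\sH)$ the Higgs bundle reduces to $\sH$, and the associated representation is compact, which is the first alternative. On a component in the image of $\Theta_{p,q}$ I would deform inside the parameter space $\Mm_{K^p}(\sSO(1,q-p+1))\times\bigoplus_{j=1}^{p-1}H^0(K^{2j})$, which is legitimate because $\Theta_{p,q}$ is a homeomorphism onto this open and closed image. I would first contract the Hitchin-base factor $\bigoplus_{j=1}^{p-1}H^0(K^{2j})$ to the origin by a straight-line homotopy, and then, using $q-p+1>2$, invoke Remark~\ref{rem comp K^p twisted} to deform the $K^p$-twisted $\sSO(1,q-p+1)$-factor within its component to one with vanishing twisted Higgs field, namely to a polystable orthogonal bundle $W_0$. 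By Remark~\ref{Rem 0 Kp higgs field}, the endpoint $\widehat\Theta_{p,q}(W_0,0,0,\dots,0)$ is the direct sum of a Higgs bundle in the $\sSO(p,p-1)$-Hitchin component, twisted by $I=\det(W_0)$, with $W_0$ itself. Applying the nonabelian Hodge correspondence to each summand, the Hitchin factor corresponds to $\iota_{p,q}\circ\iota_{\rm pr}\circ\rho_{\rm Fuch}$ for some Fuchsian $\rho_{\rm Fuch}$, the twist by $I$ corresponds to tensoring by $\det(\alpha)$, and $W_0$ corresponds to the compact representation $\alpha\colon\Gamma\to\sO(q-p+1)$ furnished by Narasimhan--Seshadri--Ramanathan (Theorem~\ref{thm tau injective compact}); this reproduces exactly~\eqref{eq pos fuch}, giving the second alternative.

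The step I expect to be the main obstacle is this last identification: verifying under the nonabelian Hodge correspondence that the split orthogonal Higgs bundle $\widehat\Theta_{p,q}(W_0,0,\dots,0)$ is carried precisely to~\eqref{eq pos fuch}. This requires tracking how the twist by $I=\Lambda^{q-p+1}W_0$ distributes over the graded summands $\Kk_p$ and $\Kk_{p-1}$ and matches the tensoring of the Fuchsian factor by $\det(\alpha)$, and how the principal embedding $\iota_{\rm pr}$ into $\sSO(p,p-1)$ followed by $\iota_{p,q}$ recovers the stated factorization; the orthogonal and determinant bookkeeping is where care is needed. The remaining ingredients---descent to a minimum from properness of $f$, the homeomorphism and open-closedness of $\Theta_{p,q}$, and the two deformation Remarks---are quoted directly from the results above.
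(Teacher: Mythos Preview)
Your proposal is correct and follows essentially the same route as the paper: the argument there is the short paragraph immediately preceding the theorem, which combines Remarks~\ref{Rem 0 Kp higgs field} and~\ref{rem comp K^p twisted} to deform any Higgs bundle in the image of $\Theta_{p,q}$ (for $q-p+1>2$) to the split form $(\text{Hitchin }\sSO(p,p-1)\text{-factor}\otimes I)\oplus W_0$, and then invokes the nonabelian Hodge correspondence. Your write-up simply makes explicit the Morse-theoretic step (properness of $f$, Theorem~\ref{Thm local min SOpq}, exclusion of type~3 since $q>p+1$) that the paper has already absorbed into Theorem~\ref{component sopq}; one small phrasing slip to fix is that a component of $\Mm(\sSO(p,q))$ containing a type~1 minimum is not itself ``a component of $\Mm(\sH)$'' but rather \emph{contains} a point of $\Mm(\sH)$, which is all you need for the first alternative.
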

\begin{Remark}
Recently, Guichard and Wienhard have developed a notion called positivity which conjecturally characterizes components of the character variety which deserve the name ``higher Teichm\"uller spaces'' \cite{PosRepsGWPROCEEDINGS}. In this work, the classical groups which have such a positive structure are split groups, Hermitian groups (of tube type) and $\sSO(p,q)$ for $1<p<q$. For the split and Hermitian groups, positive $\sSO(p,q)$-representations are exactly the set of Hitchin representations and maximal representations respectively. Moreover, it is not hard to show that the representations in \eqref{eq pos fuch} define positive representations. Thus, if certain conjectures of Guichard--Wienhard hold, the components defined by Theorem \ref{thm calyey com} are exactly the higher Tiechm\"uller components for the group $\sSO(p,q)$.
\end{Remark}
\begin{Remark}
For the case $\sSO(p,p+1)$ there is a trichotomy, since their are $p(2g-2)-1$ components which cannot be deformed to compact representations and cannot be deformed to Fuchsian representations. In \cite{CollierSOnn+1components}, the $\sSO(p,p+1)$-case is studied in detail, and it is conjectured that every representation in these $p(2g-2)-1$ components is Zariski dense.
\end{Remark}

\subsection*{Acknowledgments}
The article is roughly based on a three hour mini-course given by the author at University of Illinois at Chicago in June 2018. I would like to thank Qiongling Li for her complementary mini-course. I would also like to thank all the participants of the mini-courses for their enthusiasm and interest. The author is funded by a National Science Foundation Mathematical Sciences Postdoctoral Fellowship, NSF MSPRF no.~1604263.

\pdfbookmark[1]{References}{ref}
\LastPageEnding

\end{document}